\documentclass{article}
\usepackage{amsthm}
\usepackage{amsfonts}
\usepackage{amssymb}
\usepackage{amsmath}
\usepackage{graphicx}
\usepackage{tikz}
\usepackage{tikz-cd}
\usepackage{hyperref}
\usepackage[capitalise]{cleveref}
\usepackage{adjustbox}
\usepackage{rotating}

\newtheorem{theorem}{Theorem}[section]
\newtheorem{lemma}[theorem]{Lemma}
\newtheorem{proposition}[theorem]{Proposition}
\newtheorem{corollary}[theorem]{Corollary}

\theoremstyle{definition}
\newtheorem{definition}[theorem]{Definition}
\newtheorem{example}[theorem]{Example}
\newtheorem*{definition*}{Definition}

\theoremstyle{remark}
\newtheorem{remark}[theorem]{Remark}

\newcommand{\B}{\mathcal B}

\renewcommand{\H}{\mathcal H}

\newcommand{\K}{\mathcal K}
\renewcommand{\L}{\mathcal L}
\newcommand{\M}{\mathcal M}
\newcommand{\N}{\mathcal N}
\renewcommand{\O}{\mathcal O}

\newcommand{\CC}{\mathbb C}

\newcommand{\NN}{\mathbb N}

\newcommand{\RR}{\mathbb R}
\newcommand{\TT}{\mathbb T}

\newcommand{\ZZ}{\mathbb Z}
\newcommand{\Aut}{\mathrm{Aut}}
\newcommand{\Spec}{\mathrm{Spec}}
\newcommand{\Stab}{\mathrm{Stab}}
\newcommand{\iso}{\cong}
\newcommand{\diag}{\mathrm{diag}}
\newcommand{\id}{\mathrm{id}}
\newcommand{\rank}{\mathrm{rank}}
\newcommand{\cat}{\mathsf}
\newcommand{\cns}{\mathrm}
\renewcommand{\:}{\colon}
\newcommand{\tr}{\mathrm{tr}}
\newcommand{\tensor}{\mathbin{\otimes}}
\newcommand{\Ad}{\mathrm{Ad}}
\newcommand{\VT}{\mathrm{VT}}

\renewcommand{\>}{\rangle}

\title{Vertex-transitive quantum graphs}
\author{Mac Hayes, Trevor Jess, Andre Kornell, and Remi Salinas Schmeis}
\date{}

\newcommand{\Addresses}{{
  \bigskip
  \footnotesize

  \textsc{Department of Mathematics, University of Kansas}
  \par\nopagebreak
  \textsc{Lawrence, KS 66045}
  \par\nopagebreak
  \textit{E-mail address}: \texttt{machayes00@ku.edu}

  \medskip
  
  \textsc{Department of Mathematical Sciences, New Mexico State University}
  \par\nopagebreak
  \textsc{Las Cruces, NM 88003}
  \par\nopagebreak
  \textit{E-mail address}: \texttt{tkjess@nmsu.edu}

    \medskip
  
  \textsc{Department of Mathematical Sciences, New Mexico State University}
  \par\nopagebreak
  \textsc{Las Cruces, NM 88003}
  \par\nopagebreak
  \textit{E-mail address}: \texttt{kornell@nmsu.edu}

    \medskip
  
  \textsc{Department of Mathematical Sciences, New Mexico State University}
  \par\nopagebreak
  \textsc{Las Cruces, NM 88003}
  \par\nopagebreak
  \textit{E-mail address}: \texttt{remis@nmsu.edu}

}}

\allowdisplaybreaks

\begin{document}

\maketitle

\begin{abstract}
We define a quantum graph to be vertex-transitive if the join of its automorphism group is the maximum quantum relation on its quantum vertex set, in direct analogy with the classical case. All simple quantum graphs in $M_2(\CC)$ are vertex-transitive, but many simple quantum graphs in $M_3(\CC)$ are not vertex-transitive. We provide a complete classification of vertex-transitive quantum graphs in $M_3(\CC)$ up to isomorphism. To do this, we introduce a polynomial invariant for quantum graphs in $M_n(\CC)$, which we call the panoramic polynomial.
\end{abstract}

\vspace{.5cm}

\noindent {\em Key words:
  quantum graph, operator system, vertex-transitive quantum graph, regular quantum graph, quantum relation, automorphism group.
}

\vspace{.5cm}

\noindent MSC 2020:
46L89, % Other “noncommutative” mathematics based on C*-algebra theory
47L05, % Linear spaces of operators
81P47, % Quantum channels, fidelity 
05C75, % Structural characterization of families of graphs
46L07. % Operator spaces and completely bounded maps

\section{Introduction}

Quantum graphs were introduced by Duan, Severini, and Winter in \cite[sec.~II]{zbMATH06727895}, where a quantum graph is synonymously an operator system of matrices, i.e., a subspace $R \subseteq M_n(\CC)$ that is closed under the Hermitian adjoint operation and contains the identity matrix \cite[p.~157]{zbMATH03532026}. Operator systems had been studied for decades, but this analogy between operator systems and graphs was new. Specifically, the authors associated a ``confusability'' quantum graph with each quantum channel, obtaining a formula for the zero-error capacity of the quantum channel that is clearly analogous to the classical formula \cite[sec.~III]{zbMATH06727895}.

This quantum generalization of graphs has many variants, e.g., \cite{zbMATH06008057, zbMATH06936038, zbMATH07202497, zbMATH07856619}, which support a growing literature on quantum graphs, e.g., \cite{zbMATH07502493,zbMATH07809225, zbMATH07895052, Goldberg2026, arXiv:2601.09685, arXiv:2511.23121}.

\noindent Operator systems of matrices are the basic examples of quantum graphs for each of these variants. Operator systems in $M_2(\CC)$ are classified by \cite[Theorem~3.11]{zbMATH07632578} and \cite[Theorem~3.1]{zbMATH07668023}, but we are not aware of any classification results for operator systems in $M_3(\CC)$ prior to the present work.

Simple quantum graphs in $M_3(\CC)$, which we directly investigate, are exactly the orthocomplements of these operator systems. We omit the word ``simple,'' referring to simple quantum graphs as \emph{quantum graphs} and to simple graphs as \emph{graphs}, following a standard convention of graph theory.

The primary significance of a classification result for quantum graphs in $M_3(\CC)$ is that it reveals a natural class of structurally transparent examples. Classical graph theory is founded on a menagerie of small graphs that gives graph theorists their basic intuitions \cite{zbMATH00854567}, and quantum graph theory needs small examples for the same purpose. Quantum graphs in $M_2(\CC)$ remain the initial reference point. However, $M_2(\CC)$ is often regarded as being pathologically small in the context of quantum theory, e.g., \cite{zbMATH03128586,zbMATH00539968,zbMATH01477902}.

We classify the vertex-transitive quantum graphs in $M_3(\CC)$ (see Figure~\ref{fig. introduction}). All quantum graphs in $M_2(\CC)$ are vertex-transitive by \cite[Theorem~4.2]{zbMATH07668023}, but almost all quantum graphs in $M_3(\CC)$ are not vertex-transitive by \cite[Theorem~C]{zbMATH07502493}.

\begin{definition*}
Let $R \subseteq M_n(\CC)$ be a subspace. Then,
\begin{enumerate}
\item $R$ is a \emph{quantum graph} if $a^\dagger \in R$ and $\cns{Tr}(a) = 0$ for all $a \in R$,
\item $R$ is \emph{vertex-transitive} if furthermore $\{u \in U(n) \mid u R u^\dagger = R\}$ spans $M_n(\CC)$.
\end{enumerate}
\end{definition*}

\begin{figure}[ht]
\centering
\begin{tabular}{|c|c|c|c|c|} 
 \hline
 $R$ & $n$ & $\dim(R)$ & $\Aut(R)$ & $\cns{diam}(R)$ \\
 \hline
 $\mathrm{VT}^1_0$ & 1 & 0 & 1 & $0$ \\
 \hline
 $\mathrm{VT}^2_0$ & 2 & 0 & $SO(3)$ & $\infty$ \\
 $\mathrm{VT}^2_1$ & 2 & 1 & $O(2)$ & $\infty$ \\ 
 $\mathrm{VT}^2_2$ & 2 & 2 & $O(2)$  & $2$  \\
 $\mathrm{VT}^2_3$ & 2 & 3 & $SO(3)$ & $1$  \\ 
 \hline
 $\mathrm{VT}^3_0$ & 3 & 0 & $PU(3)$ & $\infty$ \\
 $\mathrm{VT}^3_2$ & 3 & 2 & $\TT^2 \rtimes S_3$ & $\infty$ \\ 
 $\mathrm{VT}^3_3(0)$ & 3 & 3 & $S_4$  & $2$  \\
 $\mathrm{VT}^3_3(\theta)$ & 3 & 3 & $A_4$  & $2$  \\
 $\mathrm{VT}^3_3(\frac \pi 2)$ & 3 & 3 & $SO(3)$  & $2$  \\
 $\mathrm{VT}^3_4$ & 3 & 4 & $(C_3 \wr S_2) \cap A_6$ & $2$  \\
 $\mathrm{VT}^3_5(0)$ & 3 & 5 & $SO(3)$  & $2$  \\
 $\VT^3_5(\theta)$ & 3 & 5 & $A_4$  & $2$  \\
 $\VT^3_5(\frac \pi 2)$ & 3 & 5 & $S_4$ & $2$ \\
 $\VT^3_6$ & 3 & 6 & $\TT^2 \rtimes S_3$ & $2$ \\
 $\VT^3_8$ & 3 & 8 & $PU(3)$ & $1$ \\ 
 \hline
\end{tabular}
\caption{All vertex-transitive quantum graphs $R \subseteq M_n(\CC)$ for $n \leq 3$ modulo isomorphism, i.e., unitary equivalence. The parameter $\theta$ takes values in  $(0, \frac \pi 2)$.}
\label{fig. introduction}
\end{figure}

These definitions, as well as the definitions of the graph parameters that appear in Figure~\ref{fig. introduction}, have a natural basis in the noncommutative metaphor of noncommutative geometry \cite{zbMATH01452515,zbMATH07287276}. These motivations are discussed at length in section~\ref{sec. quantum graphs}. To obtain this classification, we introduce a polynomial invariant that we call the panoramic polynomial for quantum graphs $R \subseteq M_n(\CC)$ in section~\ref{sec. panoramic polynomial}. Thus, the primary contributions of this work are
\begin{enumerate}
\item a natural notion of vertex-transitivity for quantum graphs,
\item a useful polynomial invariant for quantum graphs in $M_n(\CC)$,
\item a complete classification of vertex-transitive quantum graphs in $M_3(\CC)$.
\end{enumerate}

The vertex-transitive quantum graphs in $M_3(\CC)$ modulo isomorphism are given in Figure~\ref{fig. classification} together with their panoramic polynomials. The panoramic polynomials for the quantum graphs $\VT^3_d$ with $d > 4$ are provided for completeness only. Indeed, the panoramic polynomial of a quantum graph $R \subseteq M_n(\CC)$ has $\dim(R)$ variables, so we use this invariant to classify the vertex-transitive quantum graphs $R \subseteq M_3(\CC)$ such that $\dim(R) \leq 4$ and appeal to \cref{complement} to complete the classification. We obtain a classification  of even those vertex-transitive quantum graphs that need not be simple; see \cref{rem. all}.

Readers who are familiar with quantum isomorphism \cite{zbMATH07020847} will quickly recognize that all of the quantum graphs in Figure~\ref{fig. classification} except $\VT^3_3(\theta)$ and $\VT^3_5(\theta)$, for $0 \leq \theta \leq \pi/2$,
are quantum isomorphic to graphs. However, we do not provide a proof of this claim to avoid reviewing and adapting this involved notion for the sake of a minor observation.

\bigskip

\noindent \textbf{Notation.} We write $R \leq M_n(\CC)$ to indicate that $R$ is a subspace of $M_n(\CC)$. For each positive integer $n$, $\tr\: M_n(\CC) \to \CC$ denotes the normalized trace, so $\tr(1_n) = 1$. The canonical inner-product on $M_n(\CC)$ is defined by $\<a | b\> = \tr(a^\dagger b)$, where $a^\dagger$ is the Hermitian adjoint of $a$. The \emph{norm} of a matrix $a \in M_n(\CC)$ is always $\|a\| = \sqrt{\tr(a^\dagger a)}$. In contrast, the Schatten $p$-norm of $a$ is notated $\|a\|_p$ for all $p \in [1, \infty]$.

\bigskip

\noindent \textbf{Acknowledgments.} We thank John Harding and Violetta Garcia for contributing key ideas to this project. We also thank Gabriel Agnew, Hannah Himelright, and Diego Langevin for useful discussion. This research was supported by the National Science Foundation under Award No.~DMS-2231414.

\section{Quantum graphs}\label{sec. quantum graphs}

The present article deals exclusively with quantum graphs in $M_n(\CC)$. However, in this section, we motivate the definitions in section~\ref{sec. panoramic polynomial} in a broader context. Specifically, we motivate the definition of vertex-transitivity for the canonical quantum generalization of graphs in noncommutative geometry \cite{arXiv:2601.09685}.

For us, a \emph{graph} is a set that is equipped with an irreflexive symmetric binary relation. 
Thus, loops and multiple edges are forbidden. Both sets and binary relations have canonical quantum generalizations in noncommutative geometry. Sets, i.e., discrete quantum spaces, are generalized to $C^*$-algebras that are $c_0$-direct sums of full matrix algebras \cite{zbMATH04152742} or, equivalently, to von Neumann algebras that are $\ell^\infty$-direct sums of full matrix algebras \cite{zbMATH07287276}. These von Neumann algebras are called the \emph{hereditarily atomic} von Neumann algebras.
Binary relations are then generalized to quantum relations in the sense of Weaver \cite{zbMATH06008057}.

Sets and binary relations form a category $\cat{Rel}$, and similarly, hereditarily atomic von Neumann algebras and quantum relations form a category $\cat{qRel}$. Both $\cat{Rel}$ and $\cat{qRel}$ are dagger compact categories that are enriched over complete modular ortholattices \cite[sec.~3]{zbMATH07287276}. This categorical structure essentially consists of standard constructions for sets and binary relations and for their quantum generalizations. We do not review this categorical structure but instead make do with the fragment of the noncommutative dictionary in Figure~\ref{fig. noncommutative metaphor}.

\begin{figure}[ht]
\begin{center}
\begin{tabular}{| c |c|} 
 \hline
 classical concept & quantum generalization \\
 \hline
 set & hereditarily atomic von Neumann\ algebra\\
 binary relation & quantum relation \\
 diagonal relation &  commutant quantum relation \\
 converse of a binary relation & adjoint of a quantum relation \\
 composition of binary relations & product of quantum relations \\
 union of binary relations & join of quantum relations\\
 disjoint binary relations & orthogonal quantum relations \\
 \hline
\end{tabular}
\caption{A fragment of the noncommutative metaphor.}
\label{fig. noncommutative metaphor}
\end{center}
\end{figure}

\begin{definition}
We define the terms in Figure~\ref{fig. noncommutative metaphor}. Let $\M \subseteq \B(\H)$, $\N \subseteq \B(\K)$, and $\O \subseteq \B(\L)$ be von Neumann algebras.
\begin{enumerate}
\item The von Neumann algebra $\M$ is \emph{hereditarily atomic} \cite{zbMATH07287276} if it is of the form
$$
\M \iso \bigoplus_{i \in I}^{\ell^\infty} M_{n_i}(\CC).
$$
\item A \emph{quantum relation} \cite{zbMATH06008057} from $\M$ to $\N$ is an ultraweakly closed subspace $R \subseteq \B(\H, \K)$ such that $\N' R \M' \subseteq R$.
\item The \emph{commutant quantum relation} on $\M$ is the commutant $\M' \subseteq \B(\H, \H)$.
\item The \emph{adjoint} of a quantum relation $R$ from $\M$ to $\N$ is the quantum relation $R^\dagger$ from $\N$ to $\M$ that is defined by
$
R^\dagger = \{a^\dagger \mid a \in R\}.
$
\item The \emph{product} of quantum relations $R$ from $\M$ to $\N$ and $S$ from $\N$ to $\O$ is the quantum relation $SR$ from $\M$ to $\O$ that is defined by
$$
S R = \overline{\cns{span}\{b a \mid a \in R,\,b\in S\}}.
$$
\item The \emph{join} of quantum relations $R_i$ from $\M$ to $\N$, for $i \in I$, is their join with respect to subspace inclusion. Thus,
$$\bigvee_{i \in I} R_i = \overline{\sum_{i \in I} R_i}.$$
\item Two quantum relations $R_1$ and $R_2$ from $\M$ to $\N$ are \emph{orthogonal} if $\cns{Tr}(a_1^\dagger a_2)$ vanishes whenever $a_1 \in R_1$ and $a_2 \in R_2$ are Hilbert-Schmidt.
\end{enumerate}
\end{definition}

The fragment of the noncommutative metaphor in Figure~\ref{fig. noncommutative metaphor} immediately yields the following quantum generalizations.

\begin{definition}\label{defn. in qRel} Let $\M$ be a hereditarily atomic von Neumann algebra, and let $R$ be a quantum relation on $\M$.
\begin{enumerate}
\item The pair $(\M, R)$ is a \emph{quantum graph} if 
\begin{enumerate}
\item $R^\dagger = R$,
\item $R$ is orthogonal to $\M'$.
\end{enumerate}
\item A quantum graph $(\M, R)$ is \emph{vertex-transitive} if the join of all quantum relations $F$ on $\M$ such that
\begin{enumerate}
\item $F^\dagger F = \M' = F F^\dagger$, 
\item $F R = R F$
\end{enumerate}
is the maximum quantum relation on $\M$.
\end{enumerate}
\end{definition}

It is straightforward to check that \cref{defn. in qRel} reduces to Definitions \ref{defn. quantum graph} and \ref{defn. aut} when $\M = M_n(\CC)$.

\section{Panoramic polynomial}\label{sec. panoramic polynomial}

In this section, we define the basic terms of the article and define the panoramic polynomial, which is a polynomial invariant for quantum graphs.

\begin{definition}[\cite{zbMATH06727895} and \cite{zbMATH06936038}]\label{defn. quantum graph}
Let $R \leq M_n(\CC)$. Then,
\begin{enumerate}
\item $R$ is a \emph{quantum graph} if $\tr(a) = 0$ and $a^\dagger \in R$ for all $a \in R$,
\item an orthonormal basis of $R$ is \emph{Hermitian} if each basis element is Hermitian.
\end{enumerate}
\end{definition}

Every quantum graph has a Hermitian orthonormal basis, which may be obtained as an orthonormal basis for the real inner-product space $R_h$ that consists of the Hermitian matrices in $R$. More generally, every subspace $R$ that satisfies $R^\dagger = R$ has a Hermitian orthonormal basis. Conversely, if a subspace $R $ has a Hermitian orthonormal basis, then it immediately satisfies $R^\dagger = R$.

\begin{definition}\label{defn. iso}
Let $R, S \leq M_n(\CC)$ be quantum graphs. We define $R$ and $S$ to be \emph{isomorphic} and write $R \iso S$ if there exists a unitary $u \in M_n(\CC)$ such that $$u R u^\dagger = S.$$
In this case, the subspace $\CC u \leq M_n(\CC)$ is an \emph{isomorphism} from $R$ to $S$.
\end{definition}

\begin{definition}\label{defn. aut}
Let $R \leq M_n(\CC)$ be such that $R^\dagger = R$. Then,
\begin{enumerate}
\item the \emph{stabilizer group} of $R$ is defined by
$$
\cns{Stab}(R) =\{u \in U(n) \mid u R u^\dagger = R\},
$$
\item the \emph{automorphism group} \cite{zbMATH07502493} of $R$ is defined by
$$
\cns{Aut}(R) = \{\CC u \mid u \in \cns{Stab}(R)\},
$$
\item $R$ is defined to be \emph{vertex-transitive} if
$$
\sum_{F \in \cns{Aut}(R)} F = M_n(\CC).
$$
\end{enumerate}
\end{definition}

Of course, $\cns{Aut}(R) \iso \mathrm{Stab}(R)/ \TT 1_n$. The virtue of \cref{defn. aut}(2) is that the elements of $\cns{Aut}(R)$ are subspaces of $M_n(\CC)$, i.e., quantum relations on $M_n(\CC)$. \cref{defn. aut}(3) is the direct quantum analogue of vertex-transitivity for graphs; see section~\ref{sec. quantum graphs}. Equivalently, $R$ is vertex-transitive if $\cns{Stab}(R)' = \CC 1_n$.

\cref{defn. aut}(3) is stated for $\dagger$-closed subspaces because this case quickly reduces to the case of quantum graphs as the following proposition shows.

\begin{proposition}\label{reflexive or irreflexive}
Let $R \leq M_n(\CC)$ be such that $R^\dagger  = R$. If $R$ is vertex-transitive, then either $R$ is a quantum graph or $1_n \in R$.
\end{proposition}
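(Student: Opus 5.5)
The plan is to use the stabilizer group $G = \Stab(R)$ to average elements of $R$ onto the scalars. Assume $R$ is vertex-transitive, so $G' = \CC 1_n$, as observed after \cref{defn. aut}. Because $G$ is a closed subgroup of $U(n)$, it is compact and carries a normalized Haar measure $\mu$. For $a \in M_n(\CC)$ we form the average
$$
E(a) = \int_G u a u^\dagger \, d\mu(u).
$$

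The first step is to identify $E(a)$. Left invariance of Haar measure gives $v E(a) v^\dagger = E(a)$ for every $v \in G$, so $E(a) \in G' = \CC 1_n$. Conjugation preserves the normalized trace, so $\tr(E(a)) = \tr(a)$. Together these give $E(a) = \tr(a) 1_n$.

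The second step is to show that $E$ maps $R$ into $R$. For $a \in R$, each integrand $u a u^\dagger$ lies in $R$, since $u R u^\dagger = R$ for $u \in G$. The subspace $R$ is finite-dimensional, hence closed, so the integral also lies in $R$. If one prefers to avoid integrals, an equivalent route works. The orthogonal projection onto $R^\perp$ commutes with the conjugation maps $\Ad_u$ for $u \in G$, since each $\Ad_u$ is a unitary operator on $M_n(\CC)$ preserving $R$. Alternatively, one can argue via Carathéodory that $E(a)$ lies in the convex hull of the orbit $\{u a u^\dagger\}$.

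To conclude, suppose $R$ is not a quantum graph. Since $R^\dagger = R$, the failure must be that some $a \in R$ has $\tr(a) \neq 0$. Then $\tr(a) 1_n = E(a) \in R$, so $1_n \in R$, as required.

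The only mildly delicate point is justifying the identity $G' = \CC 1_n$. The span of $G$ is a unital $*$-subalgebra, since it is closed under products and adjoints and contains $1_n$. By hypothesis this span is all of $M_n(\CC)$. Anything commuting with all of $G$ therefore commutes with all of $M_n(\CC)$, and so is a scalar.
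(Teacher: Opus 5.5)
Your proposal is correct and is essentially the paper's proof: average an element $a \in R$ with $\tr(a) \neq 0$ over the Haar measure on $\Stab(R)$. The average lies in $R$ and in $\Stab(R)' = \CC 1_n$, and it has the same trace as $a$, so it equals $\tr(a) 1_n$, which forces $1_n \in R$.
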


\begin{proof}
Assume that $R$ is vertex-transitive, and let $\mu$ be normalized Haar measure on $\cns{Stab}(R)$. Assume that $R$ is not a quantum graph, and let $a \in R$ be such that $\tr(a) = 1$. Let
$c = \int v a v^\dagger\, d\mu(v) \in R$. We calculate that
\begin{align*}&
u c u^\dagger = \int u v a (u v)^\dagger \, d\mu(v) = \int v a v^\dagger \, d\mu(v) = c 
\end{align*}
for all $u \in \cns{Stab}(R)$ and that
\begin{align*}&
\tr(c) = \int \tr(v a v^\dagger)\, d\mu(v) = \int \tr(a) \, d\mu(v) = \tr(a) = 1.
\end{align*}
Thus, $c \in \cns{Stab}(R)' = \CC 1_n$, so $c = 1_n$. 
\end{proof}

\begin{remark}\label{rem. all}
A subspace $R \leq M_n(\CC)$ such that $R^\dagger = R$ is analogous to a graph that may have loops \cite{arXiv:2601.09685}. Thus, \cref{reflexive or irreflexive} is analogous to the proposition that a vertex-transitive graph that may have loops either has no loops or has all loops.
It also implies that the classification in \cref{main} is, in effect, a classification of all vertex-transitive $R \leq M_3(\CC)$ such that $R^\dagger = R$. Indeed, such a subspace $R$ is either a quantum graph or is of the form $\CC 1_3 + S$, where $S \leq M_3(\CC)$ is a quantum graph.
\end{remark}

\begin{definition}[\cite{zbMATH07668023} and \cite{arXiv:2510.21503}]\label{defn. regular}
Let $R \leq M_n(\CC)$ be a quantum graph. Then,
\begin{enumerate}
\item the \emph{degree matrix} of $R$ is the Hermitian matrix
$$
\cns{deg}(R) =\sum_{i =1}^d a_i a_i^\dagger,
$$
where $\{a_1, \ldots, a_d\}$ is any orthonormal basis of $R$,
\item $R$ is \emph{regular} if $\cns{deg}(R) \in \CC 1_n$.
\end{enumerate}
\end{definition}

The degree matrix $\cns{deg}(R)$ in \cref{defn. regular}(1) does not depend on the choice of the orthonormal basis $\{a_1, \ldots, a_d\}$ for $R$. This can be proved by computing that $\deg(R)$ is the matrix of the operator $n^2 (\id \tensor \tr)([R])$, where $[R] \in L(M_n(\CC)) = L(\CC^n \tensor \CC^{n\dagger})$ is the orthogonal projection operator onto the subspace $R \leq M_n(\CC)$. When the basis elements $a_i$ are all Hermitian,
$$
\cns{deg}(R) = \sum_{i =1}^d a_i^2.
$$

The expression $n^2 (\id \tensor \tr)([R])$ for $\deg(R)$ also provides the motivation for \cref{defn. regular} because the projection operator $[R]$ is the quantum analogue of the adjacency matrix of a graph. The factor $n^2$ in this expression is present because, when we view a Hermitian matrix $a \in M_n(\CC)$ as a real-valued function on a quantum set, the quantity $n^2 \tr(a)$ is the sum of its values \cite[Remark~1.4]{zbMATH07990859}.

\begin{proposition}\label{vertex-transitive regular}
Let $R \leq M_n(\CC)$ be a quantum graph. If $R$ is vertex-transitive, then it is regular.
\end{proposition}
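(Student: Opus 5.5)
The plan is to show that $\deg(R)$ commutes with every element of $\cns{Stab}(R)$. Then vertex-transitivity, in the equivalent form $\cns{Stab}(R)' = \CC 1_n$ noted after \cref{defn. aut}, forces $\deg(R) \in \CC 1_n$.

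First, fix $u \in \cns{Stab}(R)$ and an orthonormal basis $\{a_1, \ldots, a_d\}$ of $R$. Since conjugation $a \mapsto u a u^\dagger$ preserves the inner product $\<a|b\> = \tr(a^\dagger b)$ and maps $R$ onto $R$, the family $\{u a_1 u^\dagger, \ldots, u a_d u^\dagger\}$ is again an orthonormal basis of $R$. By the basis-independence of the degree matrix, established after \cref{defn. regular}, we obtain
$$
\deg(R) = \sum_{i=1}^d (u a_i u^\dagger)(u a_i u^\dagger)^\dagger = u \Big( \sum_{i=1}^d a_i a_i^\dagger \Big) u^\dagger = u \deg(R) u^\dagger .
$$
Hence $\deg(R)$ commutes with every $u \in \cns{Stab}(R)$.

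Second, the span of $\cns{Stab}(R)$ is all of $M_n(\CC)$ by \cref{defn. aut}(3). Therefore $\deg(R)$ commutes with every matrix and lies in the center $\CC 1_n$, so $R$ is regular.

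There is no serious obstacle. The only point needing care is the basis-independence of $\deg(R)$. The paper asserts it via the formula $\deg(R) = n^2(\id \tensor \tr)([R])$. Alternatively, one can check it directly: any two orthonormal bases of $R$ are related by a unitary $d \times d$ matrix $(w_{ij})$, with $b_j = \sum_i w_{ij} a_i$. Then
$$
\sum_j b_j b_j^\dagger = \sum_{i,k} \Big( \sum_j w_{ij} \overline{w_{kj}} \Big) a_i a_k^\dagger = \sum_i a_i a_i^\dagger .
$$
I would cite this fact rather than reprove it.
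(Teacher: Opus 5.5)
Your proposal is correct and follows the paper's proof: show that $\deg(R)$ is invariant under conjugation by every $u \in \Stab(R)$, then use the fact that $\Stab(R)$ spans $M_n(\CC)$ to conclude $\deg(R) \in \CC 1_n$. The only difference is cosmetic. The paper works with a Hermitian basis and writes $\sum_i (u a_i u^\dagger)^2$, while you use an arbitrary orthonormal basis and make the appeal to basis-independence explicit.
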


\begin{proof}
Assume that $R$ is vertex-transitive, and let $\{a_1, \ldots, a_d\}$ be a Hermitian orthonormal basis of $R$. We compute that, for each $u \in \cns{Stab}(R)$,
$$
u \cns{deg}(R) u^\dagger = u \left( \sum_{i = 1}^d a_i^2 \right) u^\dagger = \sum_{i = 1}^d (u a_i u^\dagger)^2  = \deg(R).
$$
Thus, $\deg(R)$ commutes with all matrices $u \in \Stab(R)$. By \cref{defn. aut}(3), it follows that $\deg(R)$ commutes with all matrices in $M_n(\CC)$, so $\cns{deg}(R) \in \CC 1_n$. Therefore, $R$ is regular.
\end{proof}

Classically, both vertex-transitivity and regularity are retained by graph complements. The same occurs in the quantum setting.

\begin{definition}\label{defn. complement}
Let $R \leq M_n(\CC)$ be a quantum graph. The \emph{complement} of $R$ is the quantum graph $\bar R \leq M_n(\CC)$ that is defined by
$$
\bar R = \{a \in M_n(\CC) \mid \tr(a) = 0,\, a \perp R   \}.
$$
\end{definition}

It is immediate from \cref{defn. complement} that $\bar R \iso \bar S$ iff $R \iso S$.

\begin{proposition}\label{complement}
Let $R \leq M_n(\CC)$ be a quantum graph. Then,
\begin{enumerate}
\item $\bar{\bar R} = R$,
\item $\Stab(\bar R) = \Stab(R)$,
\item $\Aut(\bar R) = \Aut(R)$,
\item $\bar R$ is vertex-transitive if $R$ is vertex-transitive,
\item $\bar R$ is regular if $R$ is regular.
\end{enumerate}
\end{proposition}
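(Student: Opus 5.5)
The whole proposition rests on one observation: $\bar R$ is the orthogonal complement of $R$ inside the traceless subspace $M_n(\CC)_0 = (\CC 1_n)^\perp$. Every $a\in R$ has $\tr(a)=\<1_n|a\>=0$, so $R\le M_n(\CC)_0$. Moreover $\bar R = R^\perp\cap M_n(\CC)_0$. Conjugation by a unitary $u$ is a unitary operator on $M_n(\CC)$ for the trace inner product, and it fixes $1_n$, so it preserves $M_n(\CC)_0$. I will prove the items in order, deriving each from the previous ones.

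\textbf{Item (1).} Within the finite-dimensional inner product space $M_n(\CC)_0$, taking the orthogonal complement twice returns the original subspace. Hence $\bar{\bar R}=R$. Before this, I would also confirm that $\bar R$ really is a quantum graph, i.e.\ that it is $\dagger$-closed. If $a\perp R$ and $b\in R$, then $\tr(b^\dagger a^\dagger)=\overline{\tr(ab)}=\overline{\tr((b^\dagger)^\dagger a)}$, which vanishes because $b^\dagger\in R$. The trace condition is preserved by $\dagger$ as well.

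\textbf{Items (2) and (3).} Suppose $uRu^\dagger=R$. The map $\Ad_u$ is unitary and preserves $M_n(\CC)_0$, so it maps $R^\perp\cap M_n(\CC)_0$ onto itself, giving $u\bar Ru^\dagger=\bar R$. Thus $\Stab(R)\subseteq\Stab(\bar R)$. Applying the same inclusion to $\bar R$ and using (1) gives $\Stab(\bar R)\subseteq\Stab(\bar{\bar R})=\Stab(R)$, so (2) holds. Item (3) follows immediately, since $\Aut$ is defined directly from $\Stab$.

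\textbf{Item (4).} Vertex-transitivity is defined purely in terms of $\Aut$, so it transfers from $R$ to $\bar R$ by (3).

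\textbf{Item (5).} This is the only step requiring an actual computation. Choose a Hermitian orthonormal basis $a_1,\dots,a_d$ of $R$ and extend it by Hermitian orthonormal matrices $b_1,\dots,b_e$ of $\bar R$. Together with $1_n$, these form a Hermitian orthonormal basis of $M_n(\CC)$. This gives
\[
\deg(R)+\deg(\bar R)+1_n=\deg(M_n(\CC)),
\]
where $\deg(M_n(\CC))$ means the sum $\sum x x^\dagger$ over any orthonormal basis of $M_n(\CC)$. That sum is basis-independent by the same argument as in the remark after \cref{defn. regular}.

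To evaluate it, use the matrix units rescaled to be orthonormal for the normalized trace, namely $\sqrt n\,e_{ij}$. Then
\[
\deg(M_n(\CC))=\sum_{i,j} n\,e_{ij}e_{ji}=n^2 1_n.
\]
Hence $\deg(\bar R)=(n^2-1)1_n-\deg(R)$, which is scalar whenever $\deg(R)$ is scalar.

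The main point of care is this basis-independence for the full space, which I would justify via the formula $n^2(\id\tensor\tr)([M_n(\CC)])$ cited earlier. Everything else is formal.
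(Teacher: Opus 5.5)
Your proof is correct and follows the paper's argument essentially step for step. It realizes $\bar R$ as $(\CC 1_n + R)^\perp$, equivalently the orthocomplement of $R$ inside the traceless matrices; it shows $\bar R$ is invariant under the unitary $\Ad(u)$; and it obtains regularity by extending Hermitian orthonormal bases of $R$ and $\bar R$ by $1_n$ and computing $\deg(R) + \deg(\bar R) + 1_n$.

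One small point is in your favor. You normalize the matrix units as $\sqrt n\, e_{ij}$, so you correctly get $n^2 1_n$ and hence $\deg(\bar R) = (n^2-d-1)1_n$ for regular $R$. The paper's displayed computation uses the unnormalized $e_{ij}$, which have norm $1/\sqrt n$ for the normalized trace, and so arrives at $n 1_n$ and $(n-d-1)1_n$. This constant slip does not affect the conclusion that $\bar R$ is regular.
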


\begin{proof}
To prove claim~1, we observe that $\bar R = (\CC 1_n + R)^\perp$. To prove claim~2, we observe that, for each unitary $u \in M_n(\CC)$, the operator $\Ad(u) \: M_n(\CC) \to M_n(\CC)$ that maps $a$ to $u a u^\dagger$ is itself unitary. We conclude that $\Stab(\bar R) = \Stab(R)$ since the invariant subspaces of $\Ad(u)$ are closed under the ortholattice operations. Claims~3~and~4 follow immediately from claim~2.

To prove claim~5, assume that $R$ is regular. Let $\{a_1, \ldots, a_d\}$ be a Hermitian orthonormal basis of $R$, and let $\{a_{d+1}, \ldots, a_{n^2-1}\}$ be a Hermitian orthonormal basis of $\bar R$. Then, $\{1_n, a_1, \ldots, a_{n^2-1}\}$ is a Hermitian orthonormal basis of $M_n(\CC)$, so
$$
1_n + d 1_{n} + \sum_{i = d + 1}^{n^2-1} a_i^2
=
1_n + \sum_{i = 1}^{n^2-1} a_i^2 = \sum_{i,j = 1}^{n} e_{ij}e_{i j}^\dagger  = \sum_{i,j = 1}^{n} e_{ii} = n 1_n,
$$
which implies that $\cns{deg}(\bar R) = (n -d - 1) 1_n$. Therefore, $\bar R$ is regular.
\end{proof}

\begin{definition}
Let $R \leq M_n(\CC)$ be a quantum graph, and let $d = \dim(R)$. The \emph{panoramic polynomial} of $R$ is the polynomial
$$
p_R(t_1, \ldots, t_d) = \det (t_1 a_1 + \cdots + t_d a_d) \in \RR [t_1, \ldots, t_d],
$$
which is defined up to orthogonal equivalence. Recall that two polynomials $p_1, p_2 \in \RR[t_1, \ldots, t_d]$ are \emph{orthogonally equivalent} if there exists an orthogonal matrix $r \in O(d)$ such that $p_1(t) = p_2(rt)$.
\end{definition}

\begin{proposition}
Let $R, S \leq M_n(\CC)$ be isomorphic quantum graphs. Then, their panoramic polynomials $p_R$ and $p_S$ are orthogonally equivalent.
\end{proposition}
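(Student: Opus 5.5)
The plan is to first settle the well-definedness implicit in the definition: the panoramic polynomial depends on a choice of Hermitian orthonormal basis of $R$, and any two such choices differ by a real orthogonal matrix. Then we check that conjugation by a unitary carries a Hermitian orthonormal basis of $R$ to one of $S$ without changing determinants.

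For the first step, let $\{a_1, \ldots, a_d\}$ and $\{b_1, \ldots, b_d\}$ be Hermitian orthonormal bases of $R$. Both are orthonormal bases of the real inner-product space $R_h$ of Hermitian matrices in $R$. The inner product $\tr(a^\dagger b)$ is real on $R_h$, and $\dim_\RR R_h = d$ because $R = R_h \oplus i R_h$. Hence there is $r = (r_{ij}) \in O(d)$ with $b_j = \sum_i r_{ij} a_i$. Then
$$
\sum_j t_j b_j = \sum_i \Big(\sum_j r_{ij} t_j\Big) a_i ,
$$
so $\det(\sum_j t_j b_j) = p(rt)$, where $p$ is the polynomial built from the $a_i$. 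This shows that $p_R$ is well defined up to orthogonal equivalence. The same computation shows it has real coefficients: for real $t$ the matrix $\sum t_i a_i$ is Hermitian, so its determinant is real, and a complex polynomial that is real on $\RR^d$ has real coefficients.

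For the main step, suppose $u R u^\dagger = S$ with $u$ unitary, and let $\{a_i\}$ be a Hermitian orthonormal basis of $R$. Put $b_i = u a_i u^\dagger$. Each $b_i$ is Hermitian. They are orthonormal, since $\tr(b_i^\dagger b_j) = \tr(u a_i a_j u^\dagger) = \tr(a_i a_j)$. They lie in $S$, and there are $d = \dim S$ of them, so they form a Hermitian orthonormal basis of $S$. Then
$$
\det\Big(\sum_i t_i b_i\Big) = \det\Big(u \big(\textstyle\sum_i t_i a_i\big) u^\dagger\Big) = \det\Big(\sum_i t_i a_i\Big).
$$
So with these compatible bases $p_S = p_R$ exactly. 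By the first step, any other choice of bases changes either polynomial only by an orthogonal change of variables, so $p_R$ and $p_S$ are orthogonally equivalent.

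I expect no real obstacle. The only point needing care is the first step, namely that the change of basis between two Hermitian orthonormal bases is real orthogonal rather than merely unitary. This rests on their being orthonormal bases of the same real space $R_h$ with a real inner product.
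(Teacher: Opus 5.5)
Your proof is correct, and its main step is exactly the paper's argument: conjugate a Hermitian orthonormal basis of $R$ by $u$ to obtain one of $S$, then use $\det(u x u^\dagger) = \det(x)$. Your preliminary check, via the real inner-product space $R_h$, that two Hermitian orthonormal bases differ by a real orthogonal matrix is a valid justification of the well-definedness that the paper only asserts in its definition of $p_R$.
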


\begin{proof}
Let $u \in M_n(\CC)$ be a unitary matrix such that $uR u^\dagger = S$. Let $\{a_1, \ldots, a_d\}$ be a Hermitian orthonormal basis of $R$. It follows that $\{ua_1u^\dagger, \ldots, ua_d u^\dagger\}$ is a Hermitian orthonormal basis for $S$. Therefore,
\begin{align*}
p_S(t_1, \ldots, t_d) 
& =
\det (t_1 u a_1 u^\dagger + \cdots + t_d u a_d u^\dagger) 
\\ & =
\det (u (t_1 a_1 + \cdots + t_d a_d )u^\dagger)
\\ &= 
\det (t_1 a_1 + \cdots + t_d a_d) = p_R(t_1, \ldots, t_d). \tag*{\qedsymbol}
\end{align*}
\renewcommand{\qed}{}
\end{proof}

Thus, the panoramic polynomial of a quantum graph $R \leq M_n(\CC)$ is an isomorphism invariant.

\begin{definition}\label{defn. aut0}\label{defn. connected}
Let $R \leq M_n(\CC)$ be a quantum graph. Then,
\begin{enumerate}
\item for each $u \in \cns{Stab}(R)$, we define $\cns{Ad}_R(u): R \to R$ by $a \mapsto u a u^\dagger$,
\item we define $\cns{Aut}_0(R) = \{\cns{Ad}_R(u) \mid u \in \cns{Stab}(R)\}$,
\item we define $R$ to be \emph{connected} \cite{zbMATH07309764} if $R'' = M_n(\CC)$,
\item if $R$ is connected, then we define the \emph{diameter} of $R$ by
$$
\cns{diam}(R) = \cns{min} \{m \in \NN \mid (\CC 1_n + R)^m = M_n(\CC)\},
$$
and otherwise, we define $\cns{diam}(R) = \infty$.
\end{enumerate}
\end{definition}

For each $u \in \cns{Stab}(R)$, $\cns{Ad}_R(u)$ is a unitary operator $R \to R$. Furthermore, if $a \in M_n(\CC)$ is an eigenvector of $\cns{Ad}_R(u)$ with eigenvalue $\lambda$, then $a^\dagger$ is an eigenvector of $\cns{Ad}_R(u)$ with eigenvalue $\overline \lambda$, so the characteristic polynomial of $\cns{Ad}_R(u)$ has real coefficients.

The following proposition shows that $\cns{Aut}_0(R)$ is always isomorphic to a closed subgroup of $O(p_R)$, which is the orthogonal symmetry group of the panoramic polynomial $p_R$. For every map $f\: \RR^d \to \RR$, we define $O(f)$ to be the set of all orthogonal matrices $r \in O(d)$ such that $f(rt) = f(t)$ for all $t \in \RR^d$. For every subset $A \subseteq \RR^d$, we define $O(A)$ similarly.

\begin{proposition}\label{kernel}
Let $R \leq M_n(\CC)$ be a quantum graph, and let $p_R$ be the panoramic polynomial of $R$ for some Hermitian orthonormal basis $\{a_1, \ldots, a_d\}$. Let $M_R$ be the subset of $S^{d-1} \subseteq \RR^d$ on which $p_R$ takes on its maximum value. We have the group homomorphisms
$$
\begin{tikzcd}
\Stab(R)
\arrow{r}{\varphi_1}
&
\Aut(R)
\arrow{r}{\varphi_2}
&
\Aut_0(R)
\arrow{r}{\varphi_3}
&
O(p_R)
\arrow{r}{\varphi_4}
&
O(M_R),
\end{tikzcd}
$$
where $\varphi_1\: u \mapsto \CC u$, $\varphi_2\: \CC u \mapsto \Ad_R(u)$, and $\varphi_3 \: \Ad_R(u) \mapsto  \iota^{-1} \circ \Ad_R(u) \circ \iota$ with $\iota\: \RR^d \to R$ mapping $(\alpha_1, \ldots, \alpha_d)$ to $\alpha_1 a_1 + \cdots + \alpha_d a_d$.
\begin{enumerate}
\item The kernel of $\varphi_1$ is $\TT 1_n$.
\item The kernel of $\varphi_2$ is $\{\CC u \mid u \in R' \cap U(n)\}$,
\item The kernel of $\varphi_3$ is zero,
\item The kernel of $\varphi_4$ is zero if $M_R$ spans $\RR^d$.
\end{enumerate}
Thus, if $R$ is connected, then $\CC u \mapsto \iota^{-1} \circ \Ad_R(u) \circ \iota$ is an injective group homomorphism $\Aut(R) \to O(p_R).$
\end{proposition}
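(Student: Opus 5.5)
We will check that each $\varphi_i$ is a well-defined group homomorphism, identify the kernels one at a time, and then compose.

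\emph{Well-definedness.} $\varphi_1$ is clearly a homomorphism. $\varphi_2$ is well defined because $\Ad_R(\lambda u)=\Ad_R(u)$ for $|\lambda|=1$. For $\varphi_3$, $\Ad_R(u)$ is a unitary operator on $R$ that preserves Hermiticity. Hence it maps the real span $R_h$ of $\{a_1,\ldots,a_d\}$ isometrically onto itself, so $r=\iota^{-1}\circ\Ad_R(u)\circ\iota$ lies in $O(d)$. It preserves $p_R$ because
$$p_R(rt)=\det\bigl(u(t_1a_1+\cdots+t_da_d)u^\dagger\bigr)=p_R(t),$$
the same computation as in the invariance proposition above. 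For $\varphi_4$, an element of $O(p_R)$ preserves $S^{d-1}$ and the values of $p_R$, so it permutes the maximizing set $M_R$; we take $\varphi_4$ to be restriction to $M_R$.

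\emph{Kernels.}
\begin{enumerate}
\item $\ker\varphi_1$ consists of the unitaries $u$ with $\CC u=\CC 1_n$, i.e.\ $\TT 1_n$.
\item $\Ad_R(u)=\id_R$ iff $uau^\dagger=a$ for all $a\in R$, iff $u\in R'$. So $\ker\varphi_2=\{\CC u\mid u\in R'\cap U(n)\}$.
\item $\ker\varphi_3$ is trivial because $\iota$ is a bijection, so conjugation by $\iota$ is injective.
\item If $r$ restricts to the identity on $M_R$ and $M_R$ spans $\RR^d$, then by linearity $r$ is the identity.
\end{enumerate}

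\emph{Final claim.} If $R$ is connected, then $R'=(R'')'=M_n(\CC)'=\CC 1_n$. Thus $R'\cap U(n)=\TT 1_n$, and $\ker\varphi_2$ is the trivial subgroup $\{\CC 1_n\}$. Composing the injective maps $\varphi_2$ and $\varphi_3$ gives the asserted injective homomorphism $\Aut(R)\to O(p_R)$.

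The only step needing care is the one above: that $\Ad_R(u)$ carries the real span of the $a_i$ to itself and is orthogonal there. We justify it by noting that $\Ad(u)$ is unitary for the trace inner product and commutes with $\dagger$, and that this inner product is real-valued on Hermitian matrices.
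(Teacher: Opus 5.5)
Your proposal is correct and follows essentially the same route as the paper. You identify each kernel the same way, compute $\ker\varphi_2$ from $\Ad_R(u)=\id_R \iff u\in R'$, and deduce the final injectivity from $R'=\CC 1_n$ for connected $R$. Your explicit check that $\iota^{-1}\circ\Ad_R(u)\circ\iota$ is orthogonal fills in a detail the paper leaves implicit: $\Ad(u)$ is unitary and commutes with $\dagger$, so it preserves the real span of the $a_i$ isometrically.
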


\begin{proof}
Claim~1 is immediate because $\TT$ is the group of unitaries in $\CC$. The group homomorphism $\varphi_2$ is well defined by claim~1. Its kernel is the group $\{\CC u \mid u \in R' \cap U(n)\}$ because $R' \cap U(n) \subseteq \Stab(R)$ and $\Ad_R(u)\:R \to R$ is the identity map iff $u \in R'$.

The range of the group homomorphism $\varphi_3$ in claim~3 is in $O(p_R)$ because $$p_R(\alpha_1, \ldots, \alpha_d) = \det(\iota(\alpha_1, \ldots, \alpha_d))$$
for all $\alpha_1, \ldots, \alpha_d \in \RR$, which implies that
\begin{align*}
p_R((\iota^{-1} \circ \Ad_R(u) \circ \iota)(\alpha_1, \ldots, \alpha_d))
& =
\det((\Ad_R(u) \circ\iota)(\alpha_1, \ldots, \alpha_d))
\\ &=
\det(u \iota(\alpha_1, \ldots, \alpha_d) u^\dagger)
\\ &=
\det ( \iota( \alpha_1, \ldots, \alpha_d))
=
p_R(\alpha_1, \ldots, \alpha_d).
\end{align*}
The map $\varphi_3$ is an injective group homomorphism because $\iota$ is invertible.

Claim~4 holds because any linear transformation is uniquely determined by its values on a spanning subset of its domain.

If $R$ is connected, then $R' = \CC 1_n$ by \cref{defn. connected}(3), so $\varphi_2$ and $\varphi_3$ are injective by claims 2 and 3, respectively.
\end{proof}

When the image of $u \in \cns{Stab}(R)$ is an element $g$ of $\Aut(R)$, $\Aut_0(R)$, $O(p_R)$, or $O(M_R)$ via \cref{kernel}, we say that $u$ \emph{implements} the group element $g$.

\begin{example}
The quantum graphs $R \leq M_2(\CC)$ are classified modulo isomorphism by \cite[Theorem~3.11]{zbMATH07632578} in terms of the Pauli matrices 
$$
\hat x =
\begin{bmatrix}
0 & 1 \\
1 & 0 
\end{bmatrix},
\qquad
\hat y =
\begin{bmatrix}
0 & -i \\
i & 0 
\end{bmatrix},
\qquad
\hat z =
\begin{bmatrix}
1 & 0 \\
0 & -1 
\end{bmatrix}.
$$
These quantum graphs and their panoramic polynomials are shown in Figure~\ref{fig. VT2}.

\begin{figure}[ht]
\begin{center}
\begin{tabular}{| c |c|c|c| c | } 
 \hline
 $R$ & \text{orthogonal basis} & \text{panoramic polynomial} & $\Aut(R)$ & $\cns{diam}(R)$ \\
 \hline
 $\mathrm{VT}^2_0$ & $\varnothing$ & 0 & $SO(3)$ & $\infty$ \\
 $\mathrm{VT}^2_1$ &  $\{\hat x\}$ & $- t_1^2$ & $O(2)$ & $\infty$ \\ 
 $\mathrm{VT}^2_2$ & $\{\hat x, \hat y\}$ & $- t_1^2 - t_2^2 $ & $O(2)$  & $2$  \\
 $\mathrm{VT}^2_3$ & $\{\hat x, \hat y, \hat z\}$ & $-t_1^2 -t_2^2 - t_3^2$ & $SO(3)$ & $1$  \\ 
 \hline
\end{tabular}
\end{center}
\caption{The quantum graphs in $M_2(\CC)$ modulo isomorphism.}
\label{fig. VT2}
\end{figure}
\end{example}

The quantum graphs in $M_2(\CC)$ are all vertex-transitive and, hence, regular. We now turn to the classification of regular quantum graphs in $M_3(\CC)$. Clearly the quantum graph $\VT^3_0 = \{0\} \leq M_3(\CC)$ is vertex-transitive and, hence, regular. It is also easy to show that there are no regular quantum graphs $R \leq M_3(\CC)$ with $\dim(R) = 1$.

\begin{proposition}\label{dim R neq 1}
Let $R \leq M_n(\CC)$ be a regular quantum graph, and assume that $n$ is odd. Then, $\dim(R) \neq 1$.
\end{proposition}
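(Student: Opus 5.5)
We argue by contradiction: suppose $\dim(R) = 1$. Since $R^\dagger = R$, it has a Hermitian orthonormal basis, here a single Hermitian matrix $a$ with $R = \CC a$. The three conditions we use are $\tr(a) = 0$ (because $R$ is a quantum graph), $\tr(a^2) = \|a\|^2 = 1$ (normalization), and regularity. By the formula following \cref{defn. regular}, regularity means $\deg(R) = a^2 \in \CC 1_n$.

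From $a^2 = \lambda 1_n$ we determine $\lambda$ by taking normalized traces: $\lambda = \tr(a^2) = 1$. Hence $a^2 = 1_n$. Since $a$ is Hermitian with $a^2 = 1_n$, it is diagonalizable with every eigenvalue equal to $\pm 1$. Write $k$ for the number of eigenvalues equal to $+1$, counted with multiplicity, so that $n - k$ eigenvalues equal $-1$.

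The trace condition then gives $0 = n\,\tr(a) = k - (n-k) = 2k - n$, so $n = 2k$ is even. This contradicts the assumption that $n$ is odd, and therefore $\dim(R) \neq 1$.

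There is no genuine obstacle in this argument. The only point requiring care is that the degree matrix is computed in an orthonormal basis, which is what forces the scalar to be exactly $1$ rather than an arbitrary positive constant. In fact only $\lambda > 0$ is needed: then $a/\sqrt{\lambda}$ squares to $1_n$, its eigenvalues are $\pm 1$, and the same parity argument applies.
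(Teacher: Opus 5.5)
Your proposal is correct and is essentially the paper's argument: a single Hermitian norm-one basis element $a$ has $a^2 = 1_n$ by regularity, so its eigenvalues lie in $\{1,-1\}$. Since $n$ is odd, $a$ cannot be traceless. Your write-up just makes the normalization $\lambda = \tr(a^2) = 1$ and the parity count $2k = n$ explicit, where the paper leaves them implicit.
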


\begin{proof}
Suppose that $\dim(R) = 1$. Then, $R$ has a Hermitian orthonormal basis that consists of a single Hermitian matrix $a \in R$ such that $a^2 = 1_n$. It follows that the eigenvalues of $a$ are in $\{1, -1\}$, which contradicts that $a$ is traceless. Therefore, $\dim(R) \neq 1$.
\end{proof}

\begin{lemma}\label{singular}
Let $R \leq M_n(\CC)$ be such that $R^\dagger = R$, and assume that $n$ is odd. If $\dim(R) \geq 2$, then $R$ contains a nonzero singular Hermitian matrix.
\end{lemma}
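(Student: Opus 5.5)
The plan is a continuity and parity argument on the determinant. Since $R^\dagger = R$ and $\dim(R) \geq 2$, the real space $R_h$ of Hermitian matrices in $R$ has real dimension $\dim(R) \geq 2$, as noted after \cref{defn. quantum graph}. So we may choose two linearly independent Hermitian matrices $a, b \in R$. Consider the real-valued continuous function
$$
f(\theta) = \det(\cos\theta\, a + \sin\theta\, b), \qquad \theta \in [0, \pi].
$$
It is real because the determinant of a Hermitian matrix is real.

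The key step is to compare the endpoints. At $\theta = \pi$ the matrix is $-a$, and since $n$ is odd, $f(\pi) = \det(-a) = (-1)^n \det(a) = -f(0)$. If $f(0) = 0$, then $a$ itself is a nonzero singular Hermitian matrix in $R$, and we are done. Otherwise $f(0)$ and $f(\pi)$ have opposite signs. By the intermediate value theorem there is then some $\theta_0 \in (0,\pi)$ with $f(\theta_0) = 0$.

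At that point the matrix $c = \cos\theta_0\, a + \sin\theta_0\, b$ is Hermitian, since it is a real combination of Hermitian matrices, and it lies in $R$. It is singular because $\det(c) = 0$. It is nonzero because $a$ and $b$ are linearly independent over $\RR$ and $(\cos\theta_0, \sin\theta_0) \neq (0,0)$. Hence $c$ is the required matrix.

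The only step needing care is securing two $\RR$-linearly independent Hermitian elements, which is exactly the fact that $\dim_\RR R_h = \dim_\CC R$ when $R^\dagger = R$. One could equivalently phrase the argument as saying that the odd-degree homogeneous polynomial $\det$ restricted to the unit circle in $R_h$ is odd, so it must vanish somewhere on that circle.
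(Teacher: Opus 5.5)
Your proof is correct and is essentially the paper's argument. Both use that $\det$ is homogeneous of odd degree $n$ on the real span of two independent Hermitian elements of $R$, so it changes sign and therefore vanishes somewhere; the paper phrases this as the odd-degree real polynomial $t \mapsto \det(a_1 + t a_2)$ having a real root (arguing by contradiction to get $a_2$ invertible, so the degree is exactly $n$), whereas your antipodal comparison $f(\pi) = -f(0)$ avoids that detour.
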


\begin{proof}
Assume that $\dim(R) \geq 2$, and suppose that every nonzero Hermitian matrix in $R$ is invertible. Then, $R$ has a Hermitian orthonormal basis consisting of invertible matrices $a_1, \ldots, a_d  \in R$. Let $p(t) \in \RR[t]$ be the polynomial $p(t) = \det(a_1 + a_2t)$. Diagonalizing $a_2$, we observe that the leading term of $p(t)$ is $\det(a_2) t^n$, so $p(t)$ has odd degree. Furthermore, $p(t)$ has real coefficients because it is real-valued. Thus, $p(t)$ has a root in $\RR$. We find that $R$ contains a singular Hermitian matrix of the form $a_1 + t a_2$, which is nonzero because $a_1$ and $a_2$ are linearly independent. Therefore, if $\dim(R) \geq 2$, then $R$ does contain a nonzero singular Hermitian matrix.
\end{proof}

The assumption that $n$ is odd is necessary, as the quantum graph $\mathrm{VT}^2_2$ demonstrates. We apply \cref{singular} to show that there is a unique regular quantum graph $R \leq M_3(\CC)$ such that $\dim(R) = 2$ modulo isomorphism. We provide a Hermitian orthonormal basis for this quantum graph and compute its panoramic polynomial in this basis. We formulate other such results similarly. 

\begin{proposition}\label{VT32}
Let $R \leq M_3(\CC)$ be a quantum graph such that $\dim(R) = 2$. If $R$ is regular, then it is isomorphic to the regular quantum graph
$$
\mathrm{VT}^3_2 =
\CC
\sqrt{\frac 1 2}
\begin{bmatrix}
-1 & 0 & 0 \\
0 & -1 & 0 \\
0 & 0 & 2
\end{bmatrix}
+
\CC
\sqrt{\frac 3 2}
\begin{bmatrix}
-1 & 0 & 0 \\
0 & 1 & 0 \\
0 & 0 & 0 
\end{bmatrix}
$$
and has panoramic polynomial $p_R(t_1, t_2) = \frac 1 {\sqrt 2}(t_1^3 - 3 t_1 t_2^2)$.
\end{proposition}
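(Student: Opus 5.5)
By \cref{singular}, $R$ contains a nonzero singular Hermitian matrix, and I will normalize it to have norm one and call it $a_1$. Let $a_2$ be a Hermitian matrix in $R$ orthogonal to $a_1$ with $\|a_2\| = 1$, so that $\{a_1, a_2\}$ is a Hermitian orthonormal basis of $R$. After a unitary conjugation, which is allowed since we only want $R$ up to isomorphism, I may assume $a_1 = \diag(\lambda, -\lambda, 0)$ with $\lambda \geq 0$. Tracelessness and singularity force the eigenvalues to have this form, and normalization $\tr(a_1^2) = 1$ gives $2\lambda^2/3 = 1$, so $\lambda = \sqrt{3/2}$. This is already the second basis element of $\VT^3_2$ up to sign and permutation of coordinates.

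Regularity says $a_1^2 + a_2^2 = \deg(R) = c 1_3$, and taking normalized traces gives $c = 2$. Hence $a_2^2 = 2\cdot 1_3 - a_1^2 = \diag(1/2, 1/2, 2)$. Since $a_2$ is Hermitian, it commutes with $a_2^2$, so $a_2$ preserves the eigenspaces of $\diag(1/2,1/2,2)$. Thus $a_2 = b \oplus \beta$, where $b$ is a $2\times 2$ Hermitian block with $b^2 = \frac12 1_2$ and $\beta \in \{\pm\sqrt 2\}$. Tracelessness gives $\tr(b) = -\beta$, and the eigenvalues of $b$ lie in $\{\pm 1/\sqrt 2\}$. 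If $b$ had eigenvalues of mixed sign, then $\tr(b) = 0 \neq -\beta$. Therefore $b = -\tfrac{\beta}{2} 1_2$ (scalar), and replacing $a_2$ by $-a_2$ if necessary gives $a_2 = \sqrt{1/2}\,\diag(-1,-1,2)$.

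The orthogonality $\tr(a_1 a_2) = 0$ then holds automatically, so $R = \CC a_1 + \CC a_2$ coincides with $\VT^3_2$ after listing the basis as in the statement. I expect the main obstacle to be the step that forces $a_2$ to be diagonal in the same basis as $a_1$. Here it is handled by the observation that $a_2^2$ is a function of $a_1^2$, together with the fact that the eigenvalue $2$ of $a_2^2$ is simple, which makes the block decomposition clean.

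For the converse, I will check directly that $\VT^3_2$ is a regular quantum graph. Its basis is traceless, Hermitian and orthonormal, and $a_1^2 + a_2^2 = \tfrac12\diag(1,1,4) + \tfrac32\diag(1,1,0) = 2\cdot 1_3$. Finally, writing the first listed basis element as $\sqrt{1/2}\,\diag(-1,-1,2)$ with coefficient $t_1$ and the second with coefficient $t_2$, the determinant of the diagonal matrix is
\[
\Bigl(-\tfrac{t_1}{\sqrt2} - \sqrt{\tfrac32}\, t_2\Bigr)\Bigl(-\tfrac{t_1}{\sqrt2} + \sqrt{\tfrac32}\, t_2\Bigr)\sqrt2\, t_1 = \sqrt2\, t_1\Bigl(\tfrac{t_1^2}{2} - \tfrac32 t_2^2\Bigr),
\]
which equals $\frac1{\sqrt2}(t_1^3 - 3t_1t_2^2)$.
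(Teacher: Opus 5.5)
Your proof is correct and follows essentially the same route as the paper: you use \cref{singular} to get a singular basis element, diagonalize it as $\sqrt{3/2}\,\diag(\pm 1, \mp 1, 0)$, use regularity to get $a_2^2 = \frac12\diag(1,1,4)$, split $a_2$ into a $2\times 2$ block and a scalar, and use tracelessness to force the block to be scalar. The only blemish is notational, since in ``$\tr(b) = -\beta$'' you need the unnormalized trace $\cns{Tr}$ (the paper's $\tr$ is normalized); beyond that, you go slightly further than the paper by explicitly checking that $\VT^3_2$ is regular and computing its panoramic polynomial.
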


\begin{proof}
Assume that $R$ is regular. By \cref{singular}, it has a Hermitian orthonormal basis consisting of matrices $a, b \in R$ such that $a$ is singular. Since $a$ is singular, traceless, and norm-one, without loss of generality,
$$
a =
\sqrt{\frac 3 2}
\begin{bmatrix}
- 1 & 0 & 0 \\
0 & 1 & 0 \\
0 & 0 & 0 
\end{bmatrix}.
$$
It follows that $b$ is a Hermitian matrix that is norm-one and that satisfies
$$
b^2 = 
\frac 1 2
\begin{bmatrix}
1 & 0 & 0 \\
0 & 1 & 0 \\
0 & 0 & 4
\end{bmatrix},
$$
so without loss of generality,
$
b = \sqrt{\frac 1 2}
\left[
\begin{smallmatrix}
c &  0 \\
0  & 2
\end{smallmatrix}
\right],
$
where $c \in M_2(\CC)$ is a Hermitian matrix such that $c^2 = 1_2$. Since $b$ is traceless, $\cns{Tr}(c) = - 2$, so $c = - 1_2$. Thus,
$$
b = \sqrt{\frac 1 2}
\begin{bmatrix}
- 1 & 0 & 0 \\
0 & -1 & 0 \\
0 & 0 & 2
\end{bmatrix}.
$$
\end{proof}

This presentation of $\mathrm{VT}_2^3$ in terms of a Hermitian orthonormal basis yields its regularity and its panoramic polynomial immediately. It also has a basis-free presentation as
$$
\mathrm{VT}^3_2 = \{a \in M_3 (\CC) \mid \tr(a) = 0,\, a\text{ is diagonal}\}.
$$

\begin{proposition}\label{Aut VT32}
The quantum graph $\cns{VT}^3_2$ is vertex-transitive. Its automorphism group is
$$
\Aut(\cns{VT}^3_2) \iso \TT^2 \rtimes S_3.
$$
\end{proposition}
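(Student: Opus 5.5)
We work with the basis-free description $\VT^3_2 = D_0$, the traceless diagonal matrices, and first compute $\Stab(\VT^3_2)$. We claim that $\Stab(\VT^3_2)$ is exactly the group of monomial unitaries $u = \delta \pi$, where $\delta$ is a diagonal unitary and $\pi$ is a permutation matrix. Such matrices normalize the diagonal algebra $D$ and preserve trace, so they stabilize $D_0$.

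For the converse, let $u \in \Stab(\VT^3_2)$. Then $u$ stabilizes $D = \CC 1_3 + \VT^3_2$, which is the algebra generated by $\VT^3_2$. Hence $u$ maps the minimal projections $e_{11}, e_{22}, e_{33}$ of $D$ to minimal projections of $D$. It follows that $u e_j u^\dagger = e_{\sigma(j)}$ for some permutation $\sigma$, where $e_j = e_{jj}$. Thus $u$ sends each basis vector $e_j$ to a unit multiple of $e_{\sigma(j)}$, so $u$ is monomial.

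\textbf{Vertex-transitivity.} The diagonal unitaries already span $D$, since for example $\diag(1,1,1)$, $\diag(1,\omega,\omega^2)$ and $\diag(1,\omega^2,\omega)$ span $D$. The permutation matrices lie in $\Stab(\VT^3_2)$ as well. Therefore the span of $\Stab(\VT^3_2)$ contains $D \pi$ for every permutation matrix $\pi$. Since $M_3(\CC) = \sum_{\pi} D \pi$, summing over the three cyclic permutations already suffices, and $\Stab(\VT^3_2)$ spans $M_3(\CC)$. Equivalently, $\Stab(\VT^3_2)' \subseteq D' \cap \{\text{perm}\}' = D \cap \{\text{perm}\}' = \CC 1_3$.

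\textbf{Automorphism group.} We have $\Stab(\VT^3_2) = \TT^3 \rtimes S_3$, with $S_3$ acting on $\TT^3$ by permuting coordinates. This decomposition holds because $\TT^3$ is normal, the permutation matrices form a complement, and their intersection with $\TT^3$ is trivial. Quotienting by the central scalar subgroup $\TT 1_3$, which lies in $\TT^3$ and is $S_3$-invariant, gives
$$
\Aut(\VT^3_2) \iso (\TT^3/\TT 1_3) \rtimes S_3 \iso \TT^2 \rtimes S_3.
$$
Here $\TT^3/\TT \iso \TT^2$ via $(z_1,z_2,z_3) \mapsto (z_1\bar z_3, z_2 \bar z_3)$. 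The semidirect product passes to the quotient because $S_3$ still embeds in the quotient: a permutation matrix that is scalar must be the identity.

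The main obstacle is the converse direction of the stabilizer computation, namely that every $u \in \Stab(\VT^3_2)$ is monomial. This needs the observation that $u$ preserves the algebra generated by $\VT^3_2$, which is $D$, and hence permutes its minimal projections. Everything else is routine.
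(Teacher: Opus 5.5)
Your proof is correct, but it takes a genuinely different route from the paper.

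\textbf{How the paper argues.} It never computes $\Stab(\VT^3_2)$ directly. Vertex-transitivity is read off from the presence of the diagonal and permutation unitaries in the stabilizer, which is the same observation as yours, stated more tersely. The automorphism group is then computed with the machinery of Proposition~\ref{kernel}:
\begin{itemize}
\item the panoramic polynomial $\frac{1}{\sqrt 2}(t_1^3 - 3t_1t_2^2)$ attains its maximum on $S^1$ at three points $M$ forming an equilateral triangle, which spans $\RR^2$;
\item hence $\Aut_0(\VT^3_2)$ injects into $O(M) \iso S_3$, and permutation matrices realize all of $O(M)$;
\item the kernel of $\Aut \to \Aut_0$ is given by the unitaries in the commutant $(\VT^3_2)' = D$, i.e.\ $\TT^3/\TT 1_3 \iso \TT^2$;
\item the permutation matrices split the resulting sequence.
\end{itemize}

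\textbf{How your argument differs.} You identify $\Stab(\VT^3_2)$ outright as the monomial unitaries, i.e.\ the normalizer of the diagonal masa. This uses the fact that $\Ad(u)$ must permute the minimal projections of the algebra $D = \CC 1_3 + \VT^3_2$. You then push the decomposition $\TT^3 \rtimes S_3$ through the quotient by the central scalars.

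\textbf{What each route buys.} Yours is more elementary and self-contained, and it yields an explicit description of the stabilizer. However, it depends on the special accident that $\CC 1_3 + \VT^3_2$ is a $\dagger$-algebra. That fails for $\VT^3_3(\theta)$ and $\VT^3_4$, where the paper reuses exactly this panoramic-polynomial bound $\Aut_0(R) \hookrightarrow O(M_R)$. The paper's proof here is essentially a first demonstration of that tool.

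\textbf{One small tightening.} When passing the semidirect product to the quotient, you justify it by saying a permutation matrix that is scalar must be the identity. That only gives injectivity of $S_3$ into $\Stab/\TT 1_3$. What you actually need is that the image of $S_3$ meets $\TT^3/\TT 1_3$ trivially. This follows from the fact you already noted, $S_3 \cap \TT^3 = \{1\}$, together with $\TT 1_3 \le \TT^3$.
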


\begin{proof}
The quantum graph $\cns{VT}^3_2$ is vertex-transitive because $\Stab(\VT^3_2)$ contains the diagonal unitaries and the permutation unitaries.

We calculate $\Aut(\cns{VT}^3_2)$ using \cref{kernel}. By \cref{VT32}, the panoramic polynomial of this quantum graph is $p(t_1, t_2) = \frac 1 {\sqrt 2}(t_1^3 - 3 t_1 t_2^2)$, which achieves its maximum value on the set $$M = \{(1,0), (- 1/2,\sqrt 3 /2), ( - 1/2, - \sqrt 3 /2)\} \subseteq S^1.$$
Each permutation of $M$ is implemented by a permutation matrix in $\Stab(\VT^3_2)$, so $\Aut_0(\cns{VT}^3_2) \iso S_3$. Separately, we calculate that the kernel of the group homomorphism $\Aut(\cns{VT}^3_2) \to \Aut_0(\cns{VT}^3_2)$ is isomorphic to $$\{\CC u \mid u \in (\cns{VT}^3_2)' \cap U(3)\} = \{\CC u \mid u \in \CC^3 \cap U(3)\} \iso \TT^2.$$
Finally, this group homomorphism splits because each element of the codomain $\Aut_0(\cns{VT}^3_2) \iso S^3$ is implemented by a permutation matrix in $\Stab(\cns{VT}^3_2)$.
\end{proof}

\section{$3$-dimensional quantum graphs in $M_3(\CC)$}

In this section, we classify the regular quantum graphs $R \leq M_3(\CC)$ such that $\cns{dim}(R) = 3$ and observe that these quantum graphs are all vertex-transitive.

\begin{lemma}\label{singulars}
Let $R \leq M_n(\CC)$ be such that $R^\dagger = R$, and assume that $n$ is odd.
If $\dim(R) \geq 2$, then $R$ has a Hermitian orthonormal basis such that at most one basis element is invertible.
\end{lemma}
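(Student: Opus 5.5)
The plan is induction on $d = \dim(R)$, using \cref{singular} to peel off singular basis elements one at a time. The idea is to work with the real inner-product space $R_h$ of Hermitian matrices in $R$, which has real dimension $d$. A Hermitian orthonormal basis of $R$ is the same thing as an orthonormal basis of $R_h$. The goal is an orthonormal basis $a_1, \ldots, a_d$ of $R_h$ in which $a_1, \ldots, a_{d-1}$ are all singular.

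The construction is recursive. Set $V_0 = R_h$. For $k = 1, \ldots, d-1$, the subspace $V_{k-1} \leq R_h$ has real dimension $d - k + 1 \geq 2$. Let $S_{k-1} \leq M_n(\CC)$ be its complex span. Then $S_{k-1}^\dagger = S_{k-1}$, since $S_{k-1}$ is spanned by Hermitian matrices, and $\dim_\CC S_{k-1} = d-k+1 \geq 2$. The complex and real dimensions agree because a real-orthonormal family of Hermitian matrices is also orthonormal for the complex inner product: $\<a|b\> = \tr(ab)$ is real for Hermitian $a, b$. By \cref{singular}, $S_{k-1}$ contains a nonzero singular Hermitian matrix. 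Every Hermitian element of $S_{k-1}$ lies in $V_{k-1}$, because if $\sum \lambda_j b_j$ is Hermitian with $b_j$ Hermitian and linearly independent, then the $\lambda_j$ are real. Normalize that matrix to obtain $a_k$, and let $V_k$ be the orthogonal complement of $a_k$ in $V_{k-1}$.

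After $d-1$ steps, $V_{d-1}$ is one-dimensional. Choose a unit vector $a_d$ in it. By construction, $a_1, \ldots, a_d$ is a Hermitian orthonormal basis of $R$, and only $a_d$ can be invertible. This proves the lemma.

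The only step that needs care is verifying that \cref{singular} applies at each stage. Concretely, one must check that $S_{k-1}$ is $\dagger$-closed and has complex dimension at least $2$, and that the singular Hermitian matrix it produces actually lies in $V_{k-1}$ rather than merely in $S_{k-1}$. Both follow from the observation that the Hermitian part of the complex span of a real subspace of Hermitian matrices is that real subspace itself. I expect this bookkeeping to be the main obstacle; the rest is direct.
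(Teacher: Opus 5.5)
Your proof is correct and follows essentially the same route as the paper. The paper also inducts on $\dim(R)$: it uses Lemma~\ref{singular} to extract a norm-one singular Hermitian $a_d$, then applies the induction hypothesis to the $\dagger$-closed complement $\{a \in R \mid a \perp a_d\}$, which is the complex span of your real complement $V_k$. Your version unrolls that induction into an explicit recursion on $R_h$ and spells out the real-versus-complex bookkeeping that the paper leaves implicit.
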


\begin{proof}
This proof is by induction on $\dim(R)$. The base case $\dim(R) = 2$ follows from \cref{singular}. For the induction step, assume that every $\dagger$-closed subspace $S \leq M_n(\CC)$ with $2 \leq \dim(S) < \dim(R)$ has a Hermitian orthonormal basis such that at most one basis element is invertible. By \cref{singular}, $R$ contains a norm-one singular Hermitian matrix $a_d$. The subspace $S = \{a \in R \mid a \perp a_d\}$ is clearly $\dagger$-closed, so by the induction hypothesis, $S$ has a Hermitian orthonormal basis $\{a_1, \ldots, a_{d-1}\}$ such that $a_i$ is singular for all $1 < i < d$. We conclude that $\{a_1, \ldots, a_d\}$ is a Hermitian orthonormal basis of $R$ such that $a_i$ is singular for all $1 < i \leq d$.
\end{proof}

\begin{lemma}\label{squares}
Let $R \leq M_3(\CC)$ be a quantum graph such that $\dim(R) = 3$. If $R$ is regular, then $R$ is isomorphic to the span of norm-one Hermitian matrices $a_1, a_2, a_3$ that satisfy
\begin{equation*}
a_1^2=\frac{3}{2}\begin{bmatrix}0 & 0 & 0\\ 0 & 1 & 0\\ 0 & 0 & 1\end{bmatrix}, 
\quad a_2^2=\frac{3}{2}\begin{bmatrix}1 & 0 & 0\\ 0 & 0 & 0\\ 0 & 0 & 1\end{bmatrix},
\quad a_3^2=\frac{3}{2}\begin{bmatrix}1 & 0 & 0\\ 0 & 1 & 0\\ 0 & 0 & 0\end{bmatrix}.
\end{equation*}
\end{lemma}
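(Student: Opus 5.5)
Since $R$ is regular, I first pin down the scalar in $\deg(R)$. By \cref{singulars}, $R$ has a Hermitian orthonormal basis $\{a_1, a_2, a_3\}$ in which $a_2$ and $a_3$ are singular. Regularity gives $a_1^2 + a_2^2 + a_3^2 = c 1_3$. Taking normalized traces, each term has trace $\|a_i\|^2 = 1$, so $c = 3$.

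Next I describe the two singular elements. A singular, traceless, norm-one Hermitian matrix in $M_3(\CC)$ has eigenvalues $\lambda, -\lambda, 0$. The norm condition $\tfrac{2\lambda^2}{3} = 1$ forces $\lambda^2 = \tfrac 3 2$. Hence $a_2^2 = \tfrac 3 2 (1_3 - p)$ and $a_3^2 = \tfrac 3 2 (1_3 - q)$, where $p$ and $q$ are the rank-one projections onto the kernels of $a_2$ and $a_3$. Substituting into $a_1^2 = 3\cdot 1_3 - a_2^2 - a_3^2$ gives
$$
a_1^2 = \tfrac 3 2 (p + q).
$$

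The main step is to show that $p \perp q$, and this is where tracelessness of $a_1$ enters. Let $p$ and $q$ project onto unit vectors $x$ and $y$, and set $\gamma = |\langle x | y\rangle| \in [0,1]$. The eigenvalues of $p + q$ are $1 + \gamma$, $1 - \gamma$ and $0$. Since $a_1$ is Hermitian, its eigenvalues are $\pm\sqrt{\tfrac 3 2 (1+\gamma)}$, $\pm\sqrt{\tfrac 3 2 (1-\gamma)}$ and $0$, with some choice of signs. Tracelessness then requires $\sqrt{1+\gamma} = \sqrt{1-\gamma}$, since the case $\gamma = 1$ leaves a single nonzero eigenvalue and hence nonzero trace. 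So $\gamma = 0$, meaning $x \perp y$. I expect this to be the only step needing care: $a_1^2$ determines $a_1$ only up to signs, and one must check that no sign choice rescues $\gamma > 0$.

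Finally, I take a unitary $u$ with $u x = e_2$ and $u y = e_3$; it then maps $(x, y)^\perp$ to $\CC e_1$. Conjugating the basis by $u$ preserves Hermiticity, orthonormality and the norms, and turns $p$ and $q$ into $e_{22}$ and $e_{33}$. This yields
$$
a_2^2 = \tfrac 3 2 (e_{11} + e_{33}), \qquad a_3^2 = \tfrac 3 2 (e_{11} + e_{22}), \qquad a_1^2 = \tfrac 3 2 (e_{22} + e_{33}),
$$
which are exactly the stated squares. Hence $R$ is isomorphic to $\cns{span}\{u a_1 u^\dagger, u a_2 u^\dagger, u a_3 u^\dagger\}$ as claimed.
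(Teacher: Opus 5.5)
Your proof is correct and follows the same route as the paper: the basis with $a_2, a_3$ singular, the forms $a_2^2 = \frac32(1_3 - p)$ and $a_3^2 = \frac32(1_3 - q)$, regularity with $\deg(R) = 3 \cdot 1_3$ giving $a_1^2 = \frac32(p+q)$, and the final unitary conjugation. The only local difference is in showing $p \perp q$. The paper observes that $a_1$ is singular, applies the same normal form to get $a_1^2 = \frac32(1_3 - x_1x_1^\dagger)$, and reads off $x_1x_1^\dagger + x_2x_2^\dagger + x_3x_3^\dagger = 1_3$. You instead compute the spectrum $\{1+\gamma, 1-\gamma, 0\}$ of $p+q$ and invoke the tracelessness of $a_1$ directly; both arguments are valid.
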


\begin{proof}
Assume that $R$ is regular. By \cref{singular}, $R$ has a Hermitian orthonormal basis consisting of matrices $a_1, a_2, a_3 \in R$ such that $a_2$ and $a_3$ are singular. Since $a_2$ and $a_3$ are singular, traceless, and norm-one, we have that
$$
a_2 \sim a_3 \sim
\sqrt{\frac 3 2}
\begin{bmatrix}
-1 & 0 & 0 \\
0 & 1 & 0 \\
0 & 0 & 0 
\end{bmatrix}, \quad \qquad
a_2^2 \sim a_3^2 \sim
\frac 3 2
\begin{bmatrix}
1 & 0 & 0 \\
0 & 1 & 0 \\
0 & 0 & 0
\end{bmatrix},
$$
where $\sim$ denotes unitary equivalence.
We infer that $a_2^2 = \frac 3 2 (1_3 - x_2 x_2^\dagger)$ and $a_3^2 = \frac 3 2 (1_3 - x_3 x_3^\dagger)$ for some unit vectors $x_2, x_3 \in \CC^3$. The equation $a_1^2 + a_2^2 + a_3^2 = 31_3$ now implies that
$$
a_1^2 = \frac 3 2 x_2 x_2^\dagger + \frac 3 2 x_3 x_3^\dagger.
$$
Hence, $a_1^2$ is singular, and so is $a_1$. As before, we deduce the existence of a unit vector $x_1 \in \CC^3$ such that $a_1^2 = \frac 3 2 (1_3 - x_1 x_1^\dagger)$. 

We now have unit vectors $x_1, x_2, x_3 \in \CC^3$ such that $a_i^2 = \frac 3 2 (1_3 - x_i x_i^\dagger)$ for all $i \in \{1, 2, 3\}$. The equation $a_1^2 + a_2^2 + a_3^2 = 3 1_3$ now yields
$$
1_3 = x_1 x_1^\dagger + x_2 x_2^\dagger + x_3 x_3^\dagger.
$$
It follows that $\{x_1, x_2, x_3\}$ is an orthonormal basis of $\CC^3$. Conjugating $a_1$, $a_2$, and $a_3$ by the unitary $[\begin{matrix} x_1 & x_2 & x_3 \end{matrix}]^\dagger$, we conclude that $R$ is isomorphic to a quantum graph with a Hermitian orthonormal basis of the claimed form.
\end{proof}

\begin{theorem}\label{VT33}
Let $R \leq M_3(\CC)$ be a quantum graph with $\dim(R) = 3$. If $R$ is regular, then $R$ is isomorphic to the regular quantum graph
\begin{equation*}
\mathrm{VT}^3_3(\theta) =
\CC \sqrt{\frac 3 2}
\begin{bmatrix}
0 & 0 & 0 \\
0 & 0 & 1 \\
0 & 1 & 0
\end{bmatrix}
+
\CC \sqrt{\frac 3 2}
\begin{bmatrix}
0 & 0 & 1 \\
0 & 0 & 0 \\
1 & 0 & 0
\end{bmatrix}
+
\CC \sqrt{\frac 3 2}
\begin{bmatrix}
0 & e^{-i\theta} & 0 \\
e^{i \theta} & 0 & 0 \\
0 & 0 & 0
\end{bmatrix}
\end{equation*}
for some $\theta \in \RR$ and has panoramic polynomial $p_R(t_1, t_2, t_3) = 3 \sqrt{\frac 3 2} \cos (\theta) t_1 t_2 t_3$.
\end{theorem}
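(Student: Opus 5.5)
Starting from \cref{squares}, we may assume $R$ is spanned by norm-one Hermitian matrices $a_1, a_2, a_3$ with $a_i^2 = \frac 3 2 (1_3 - e_{ii})$. The plan is to pin down each $a_i$ separately, then use orthogonality to fix the relative phases, and finally use diagonal unitaries to normalize.

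\textbf{Shape of each $a_i$.} Since $a_i$ commutes with $a_i^2$, it preserves the eigenspaces of $a_i^2$. These are $\CC e_i$, with eigenvalue $0$, and its orthocomplement, with eigenvalue $\frac 3 2$. So $a_i$ vanishes on $e_i$ and has zero $i$-th row and column. On the complementary two-dimensional block it is $\sqrt{3/2}\,c_i$, where $c_i$ is a $2\times 2$ Hermitian matrix with $c_i^2 = 1_2$. Tracelessness of $a_i$ gives $\mathrm{Tr}(c_i)=0$. Hence $c_i$ has eigenvalues $\pm 1$ and is of the form
$$c_i = \begin{bmatrix} \alpha_i & \overline{\beta_i} \\ \beta_i & -\alpha_i \end{bmatrix}, \qquad \alpha_i\in\RR,\ \alpha_i^2 + |\beta_i|^2 = 1.$$
For example, $a_3 = \sqrt{3/2}\,(\alpha_3(e_{11}-e_{22}) + \overline{\beta_3}e_{12} + \beta_3 e_{21})$.

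\textbf{Killing the diagonal parts.} Orthogonality $\tr(a_ia_j)=0$ for $i\neq j$ only involves the one shared diagonal entry, since the off-diagonal supports $\{1,2\}$, $\{1,3\}$, $\{2,3\}$ are disjoint. Index the diagonal coefficients so that $a_1$ carries $\pm\alpha_1$ on entries $2,3$, $a_2$ on entries $1,3$, and $a_3$ on entries $1,2$, with signs fixed by the chosen orientation. The three orthogonality conditions then read, up to sign, $\alpha_1\alpha_2 = \alpha_1\alpha_3=\alpha_2\alpha_3=0$. So at most one $\alpha_i$ is nonzero.

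I expect this to be the main obstacle: orthogonality alone does not force all $\alpha_i=0$. I would try first to combine regularity with the fact that $R$ is a subspace, not just a triple of matrices. Regularity must hold for every orthonormal basis, not only this one. Rotate the basis, for example $a_1' = (a_1+a_2)/\sqrt 2$ and $a_2'=(a_1-a_2)/\sqrt2$. Then $\deg(R)=\sum a_i'^2$ is unchanged automatically, so this alone gives nothing new.

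Instead I would use the identity $\sum_i a_i^2 = 3\cdot 1_3$ in its full matrix form, computing the off-diagonal entries of $a_1^2+a_2^2+a_3^2$. Those off-diagonal entries must vanish. For instance, the $(1,2)$ entry receives contributions $\overline{\beta_2}\beta_1$-type terms from $a_1^2$ and $a_2^2$, together with a term $\alpha_3\overline{\beta_3}-\overline{\beta_3}\alpha_3=0$ from $a_3^2$. But the hypothesis $a_i^2=\frac32(1-e_{ii})$ already forces each $a_i^2$ to be diagonal, so again nothing new comes out.

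So the likely resolution is that a nonzero $\alpha$ case genuinely occurs and must be shown isomorphic to some $\VT^3_3(\theta)$. One route is to reselect the basis via \cref{singulars}, keeping the orthonormal basis but exploiting the freedom in choosing the singular vectors. Concretely, consider the real cubic $p_R$ on $S^2$. Check that $p_R$ restricted to the plane $\mathrm{span}(a_1,a_2)$ has a zero giving singular elements, and that a suitable rotation produces a basis with all $\alpha_i=0$. If that fails, I would directly conjugate by a rotation within the $\{1,2\}$ block to absorb $\alpha_3$. This would alter $a_1,a_2$, and then \cref{squares} must be reapplied.

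\textbf{Normalizing once all $\alpha_i=0$.} Then $|\beta_i|=1$, so $a_1 = \sqrt{3/2}(\overline{\beta_1}e_{23}+\beta_1e_{32})$, and similarly for $a_2$ and $a_3$. Conjugate by $\diag(z_1,z_2,z_3)$; this multiplies the $(j,k)$ entry by $z_j\overline{z_k}$. Two phases can be chosen to make $\beta_1=\beta_2=1$, and the remaining phase of $\beta_3$ is $e^{i\theta}$. This gives exactly $\VT^3_3(\theta)$.

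\textbf{Checks.} Regularity is immediate: each basis matrix squares to $\frac32(1-e_{ii})$, so the squares sum to $3\cdot 1_3$. For the panoramic polynomial, expand $\det$ of the matrix with zero diagonal and off-diagonal entries $t_3e^{\mp i\theta}$, $t_2$, $t_1$. This gives $(3/2)^{3/2}(e^{-i\theta}+e^{i\theta})t_1t_2t_3 = 3\sqrt{3/2}\cos\theta\, t_1t_2t_3$.
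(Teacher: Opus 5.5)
Your proposal has a genuine gap exactly where you flag it. You never handle the case in which one $\alpha_i$ is nonzero, and that case really occurs. For instance, $a_1=\sqrt{3/2}\,(e_{23}+e_{32})$, $a_2=\sqrt{3/2}\,(e_{13}+e_{31})$, $a_3=\sqrt{3/2}\,\diag(1,-1,0)$ satisfy the conclusion of \cref{squares} with $\alpha_3\neq 0$. Your normalization paragraph only covers $\alpha_1=\alpha_2=\alpha_3=0$, and neither suggested fix is carried out. The panoramic-polynomial/singular-element route is never made concrete. For the block-rotation route, you assert that it would destroy the form of $a_1,a_2$ and force you to reapply \cref{squares}, which shows the key observation is missing.

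That observation is the following. Relabel by a permutation matrix so that $\alpha_1=\alpha_2=0$. Then $|\beta_1|=|\beta_2|=1$, and a diagonal unitary makes $a_1=\sqrt{3/2}\,(e_{23}+e_{32})$ and $a_2=\sqrt{3/2}\,(e_{13}+e_{31})$. Now conjugate by the \emph{real} rotation $u_\varphi=(\cos\varphi\,1_2-i\sin\varphi\,\sigma_2)\oplus 1$, which acts on $e_1,e_2$ and fixes $e_3$. It sends $e_{13}+e_{31}$ and $e_{23}+e_{32}$ to real combinations of themselves, so $\Ad(u_\varphi)$ maps $\CC a_1+\CC a_2$ onto itself. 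You only need the span to be preserved, not $a_1$ and $a_2$ individually. Moreover, $u_\varphi a_3u_\varphi^\dagger$ stays orthogonal to this span because $\Ad(u_\varphi)$ is unitary. (A general unitary in the $\{1,2\}$ block would not preserve the span, which is presumably the source of your worry; this is also why $\beta_1,\beta_2$ are normalized before rotating.) Write the $\{1,2\}$ block of $a_3$ as $\alpha_3\sigma_3+\mathrm{Re}(\beta_3)\sigma_1+\mathrm{Im}(\beta_3)\sigma_2$. Conjugation by $u_\varphi$ rotates the $(\sigma_3,\sigma_1)$-components by angle $2\varphi$ and fixes the $\sigma_2$-component, so a suitable $\varphi$ kills the diagonal part of $a_3$. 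Only then does $|\beta_3|=1$ follow. At that point your phase normalization and your determinant computation, which are both correct, complete the proof.
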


\begin{proof}
By \cref{squares}, we may assume without loss of generality that $R$ has a Hermitian orthonormal basis consisting of matrices $a_1, a_2, a_3 \in R$ such that $a_i^2 e_i = 0$ for all $i \in \{1, 2, 3\}$. Since these matrices are Hermitian, $a_i e_i = 0$ for all $i \in \{1, 2, 3\}$. Since these matrices are also traceless, we find that
\begin{equation*}
a_1=
\sqrt{\frac{3}{2}}
\begin{bmatrix}0 & 0 & 0 \\ 
0 & \alpha_1 & \overline \beta_1 \\ 
0 & \beta_1 & -\alpha_1\end{bmatrix}, 
\quad \qquad
a_2=
\sqrt{\frac{3}{2}}
\begin{bmatrix}- \alpha_2 & 0 & \beta_2 \\
0 & 0 & 0 \\
\overline \beta_2 & 0 & \alpha_2\end{bmatrix},
\end{equation*}
\begin{equation*}
a_3= 
\sqrt{\frac{3}{2}}
\begin{bmatrix}\alpha_3 & \overline \beta_3 & 0 \\
\beta_3 & -\alpha_3 & 0 \\
0 & 0 & 0\end{bmatrix},
\end{equation*}
for some $\alpha_1, \alpha_2, \alpha_3 \in \RR$ and $\beta_1, \beta_2, \beta_3 \in \CC$. Since these matrices are pairwise orthogonal, $\alpha_i \neq 0$ for at most one $i \in \{1,2,3\}$. We may assume without loss of generality that $\alpha_1 = \alpha_2 = 0$. Because $\|a_1\| = \|a_2\| = 1$, we infer that $|\beta_1| = |\beta_2| = 1$. Conjugating by the diagonal unitary $\cns{diag}(\overline \beta_2, \beta_1, 1)$, we may assume without loss of generality that $\beta_1 = \beta_2 = 1$. Thus,
\begin{equation*}
R \iso
\CC \sqrt{\frac 3 2}
\begin{bmatrix}
0 & 0 & 0 \\
0 & 0 & 1 \\
0 & 1 & 0
\end{bmatrix}
+
\CC \sqrt{\frac{3}{2}}
\begin{bmatrix}
0 & 0 & 1\\
0 & 0 & 0 \\
1 & 0 & 0
\end{bmatrix}
+
\CC \sqrt{\frac{3}{2}}
\begin{bmatrix}
\alpha & \overline \beta & 0 \\
\beta & -\alpha & 0 \\
0 & 0 & 0
\end{bmatrix}
\end{equation*}
for some $\alpha \in \RR$ and $\beta \in \CC$.

For each angle $\varphi \in \RR$, conjugation by the unitary matrix
$$
u_\varphi = 
\begin{bmatrix}
\cos (\varphi) & - \sin(\varphi) & 0 \\
\sin(\varphi) & \cos(\varphi) & 0 \\
0 & 0 & 1
\end{bmatrix}
$$
leaves the subspace $\CC a_1 + \CC a_2 \leq R$ invariant. Furthermore, writing $a_3$ as a real-linear combination of the extended Pauli matrices $\sigma_1 \oplus 0, \sigma_2 \oplus 0, \sigma_3 \oplus 0 \in M_3(\CC)$, we find that conjugation by $u_\varphi$ implements a rotation by angle $2 \varphi$ about the $(\sigma_2 \oplus 0)$-axis. Therefore, there exists an angle $\varphi \in \RR$ such that $u_\varphi a_3 u_\varphi^\dagger$ is orthogonal to $\sigma_3 \oplus 0$. We conclude that
\begin{equation*}
R \iso
\CC \sqrt{\frac 3 2}
\begin{bmatrix}
0 & 0 & 0 \\
0 & 0 & 1 \\
0 & 1 & 0
\end{bmatrix}
+
\CC \sqrt{\frac{3}{2}}
\begin{bmatrix}
0 & 0 & 1\\
0 & 0 & 0 \\
1 & 0 & 0
\end{bmatrix}
+
\CC \sqrt{\frac{3}{2}}
\begin{bmatrix}
0 & \overline \beta & 0 \\
\beta & 0 & 0 \\
0 & 0 & 0
\end{bmatrix}
\end{equation*}
for some $\beta \in \CC$. Without loss of generality, $\beta = e^{i \theta}$ for some angle $\theta \in \RR$, and this choice makes the third generator norm-one.
\end{proof}

The following proposition yields a classification of three-dimensional regular quantum graphs in $M_3(\CC)$ modulo isomorphism by the parameter $\theta$.

\begin{proposition}\label{theta1 theta2}
Let $\theta_1, \theta_2 \in \RR$. Then, the following are equivalent:
\begin{enumerate}
\item $\cns{VT}^3_3(\theta_1) \iso \cns{VT}^3_3(\theta_2)$,
\item $\theta_1 + \theta_2 \in \pi \ZZ$ or $\theta_1 - \theta_2 \in \pi \ZZ$.
\end{enumerate}
\end{proposition}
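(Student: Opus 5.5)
For the implication (2)$\Rightarrow$(1) I will write down explicit unitaries. Write $a_1, a_2, a_3(\theta)$ for the three displayed basis elements of $\VT^3_3(\theta)$.

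First, $\VT^3_3(\theta)$ depends only on $e^{i\theta}$, so $\theta \mapsto \theta + 2\pi$ is trivial. Conjugation by $\diag(1,1,-1)$ sends $a_1 \mapsto -a_1$ and $a_2 \mapsto -a_2$ and fixes $a_3(\theta)$. Conjugation by $\diag(1,-1,1)$ sends $a_1 \mapsto -a_1$, fixes $a_2$, and sends $a_3(\theta) \mapsto -a_3(\theta) = a_3(\theta+\pi)$ up to sign. Hence $\VT^3_3(\theta) \iso \VT^3_3(\theta+\pi)$ as subspaces.

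For $\theta \mapsto -\theta$ I use the permutation matrix $P$ swapping $e_1$ and $e_2$. It exchanges $\CC a_1$ and $\CC a_2$. It sends $a_3(\theta)$ to the matrix with $(1,2)$ entry $e^{i\theta}$, which is $a_3(-\theta)$. So $\VT^3_3(\theta) \iso \VT^3_3(-\theta)$. Combining these maps gives all $\theta_2 \in \pm\theta_1 + \pi\ZZ$.

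For (1)$\Rightarrow$(2) I use the panoramic polynomial as an invariant. Let $p_\theta(t) = c\cos(\theta)\, t_1t_2t_3$ with $c = 3\sqrt{3/2}$. Isomorphism forces $p_{\theta_1}$ and $p_{\theta_2}$ to be orthogonally equivalent, so their maxima on $S^2$ agree. The maximum of $|t_1t_2t_3|$ on $S^2$ is $3^{-3/2}$, and $p$ is odd, so $|\cos\theta_1| = |\cos\theta_2|$. This is exactly $\theta_2 \in \pm\theta_1 + \pi\ZZ$.

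The only step needing care is confirming the panoramic polynomial stated in \cref{VT33}. I plan to recheck it by expanding $\det(t_1a_1+t_2a_2+t_3a_3)$. The diagonal vanishes, so only the two 3-cycle terms survive. With the scalar $(3/2)^{3/2}$ these give
$$(3/2)^{3/2}\, t_1t_2t_3\,\bigl(e^{i\theta}+e^{-i\theta}\bigr) = 2(3/2)^{3/2}\cos\theta\; t_1t_2t_3.$$
Any positive constant suffices, since only $|\cos\theta|$ matters. With that confirmed, both directions are routine.
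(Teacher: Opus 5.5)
Your proof is correct and follows essentially the same route as the paper: orthogonal equivalence of the panoramic polynomials forces equal maxima on $S^2$, and explicit conjugations by the transposition $e_1 \leftrightarrow e_2$ and by a diagonal sign matrix realize $\theta \mapsto -\theta$ and $\theta \mapsto \theta + \pi$. Two small remarks: you rightly conclude $|\cos\theta_1| = |\cos\theta_2|$, which is more accurate than the paper's $\cos\theta_1 = \cos\theta_2$; and the $\diag(1,1,-1)$ step is superfluous, since that unitary lies in the stabilizer and does not change $\theta$.
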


\begin{proof}
Assume that $\cns{VT}^3_3(\theta_1) \iso \cns{VT}_3^3(\theta_2)$. It follows that their panoramic polynomials $3 \sqrt{\frac 3 2} \cos (\theta_1) t_1 t_2 t_3$ and $3 \sqrt{\frac 3 2} \cos (\theta_2) t_1 t_2 t_3$ are orthogonally equivalent, which implies that the maxima of these polynomials on the unit sphere are equal. In other words, $\frac 1 {\sqrt 2} \cos (\theta_1) = \frac 1 {\sqrt 2} \cos (\theta_2)$. Therefore, $\theta_1 + \theta_2 \in \pi \ZZ$ or $\theta_1 - \theta_2 \in \pi \ZZ$.

Conversely, assume that $\theta_1 + \theta_2 \in \pi \ZZ$ or $\theta_1 - \theta_2 \in \pi \ZZ$. For all $\theta \in \RR$, we have $\cns{VT}^3_3 (- \theta) = u \cns{VT}^3_3(\theta) u^\dagger$ and $\cns{VT}^3_3 (\theta + \pi) = v \cns{VT}^3_3(\theta) v^\dagger$, where
$$
u =
\begin{bmatrix}
0 & 1 & 0 \\
1 & 0 & 0 \\
0 & 0 & 1
\end{bmatrix},
\quad \qquad
v =
\begin{bmatrix}
-1 & 0 & 0 \\
0 & 1 & 0 \\
0 & 0 & 1
\end{bmatrix}.
$$
Hence, we may assume without loss of generality that $\theta_1 - \theta_2 \in 2 \pi \ZZ$, which implies that $e^{i \theta_1} = e^{i \theta_2}$. Therefore, $\cns{VT}^3_3(\theta_1) \iso \cns{VT}^3_3(\theta_2)$.
\end{proof}

\begin{theorem}\label{Aut VT33}
For each $\theta \in [0, \pi/2]$, the quantum graph $\cns{VT}^3_3(\theta)$ is vertex-transitive and connected. Furthermore,
\begin{enumerate}
\item $\Aut(\cns{VT}^3_3(0)) \iso S_4$,
\item $\Aut(\cns{VT}^3_3(\theta)) \iso A_4$ for $0 < \theta < \pi/2$,
\item $\Aut(\cns{VT}^3_3(\pi/2)) \iso SO(3)$.
\end{enumerate}
\end{theorem}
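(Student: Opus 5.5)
The plan is to verify everything on the explicit basis of \cref{VT33}. Write $a_1 = \sqrt{3/2}\,(e_{23}+e_{32})$, $a_2 = \sqrt{3/2}\,(e_{13}+e_{31})$ and $a_3 = \sqrt{3/2}\,(e^{-i\theta}e_{12}+e^{i\theta}e_{21})$.

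\textbf{Transitivity and connectedness.} Conjugation by a diagonal unitary $\diag(d_1,d_2,d_3)$ multiplies $e_{jk}$ by $d_j\overline{d_k}$. Hence the diagonal sign matrices $\diag(\pm1,\pm1,\pm1)$ lie in $\Stab(R)$, each sending every $a_i$ to $\pm a_i$.

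Next consider the cyclic permutation matrix $P$ ($1\to2\to3\to1$). It permutes the three off-diagonal positions cyclically. It preserves the gauge-invariant ``cycle phase'' $\arg\big((a_3)_{12}(a_1)_{23}(a_2)_{31}\big)$, up to orientation, which a cyclic permutation does not reverse. So some diagonal unitary $D$ makes $DP \in \Stab(R)$, with $DP$ permuting $a_1,a_2,a_3$ cyclically.

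The commutant of the diagonal sign matrices is the diagonal algebra, and $DP$ forces a commuting diagonal matrix to be scalar. So $\Stab(R)' = \CC 1_3$, and $R$ is vertex-transitive. Connectedness is immediate: a matrix commuting with $a_1,a_2,a_3$ must be scalar, since these have nonzero entries at every off-diagonal position. Thus $R' = \CC1_3$ and $R''=M_3(\CC)$. By \cref{kernel}, $\Aut(R)$ therefore embeds in $O(p_R)$ via its action $\Aut_0(R)\subseteq O(3)$ on the coordinates.

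\textbf{Invariants of the action.} Alongside $p_R$, I will use the trilinear form $f(x,y,z)=\tr(xyz)$ on $R_h$, which is invariant under every $\Ad_R(u)$. A direct computation gives $\tr(a_1a_2a_3)$ as a positive multiple of $e^{i\theta}$, or of its conjugate. The imaginary part $\mathrm{Im}\,\tr(xyz)$ is an alternating trilinear form on the real $3$-dimensional space $R_h$, so it equals $c\sin\theta\cdot\det[x\,y\,z]$ for some $c\neq 0$. An orthogonal map of determinant $-1$ negates this form. Hence, whenever $\sin\theta\neq 0$, every element of $\Aut_0(R)$ lies in $SO(3)$.

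\textbf{The three cases.}

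For $0\le\theta<\pi/2$, the panoramic polynomial is $p_R = 3\sqrt{3/2}\cos(\theta)\,t_1t_2t_3$ with a positive coefficient. It attains its maximum on $S^2$ exactly at the four points $\frac{1}{\sqrt3}(\pm1,\pm1,\pm1)$ with an even number of minus signs. These are the vertices of a regular tetrahedron, which span $\RR^3$. By \cref{kernel}(4), $\Aut(R)$ embeds in $O(M_R)$, the full symmetry group of the tetrahedron, which is $S_4$. The sign matrices give the Klein four-group and $DP$ gives a $3$-cycle, so $A_4\le\Aut(R)$.
\begin{enumerate}
\item For $\theta=0$, the permutation matrix swapping $e_1,e_2$ fixes $a_3$ and swaps $a_1,a_2$. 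This is a reflection, i.e.\ an odd element, so $\Aut(R)\iso S_4$.
\item For $0<\theta<\pi/2$, the alternating-form argument excludes all determinant $-1$ elements. The odd elements of $S_4\subseteq O(3)$ are exactly those with determinant $-1$, so $\Aut(R)\iso A_4$.
\end{enumerate}

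For $\theta=\pi/2$, we have $p_R=0$, so only $\Aut_0(R)\subseteq SO(3)$ is available from the alternating form. Conjugating by $\diag(1,1,i)$ turns $a_1,a_2$ into imaginary antisymmetric matrices, and $a_3$ already is one. So $R\iso i\,\mathfrak{so}(3)$, the span of the antisymmetric matrices. On this space the real orthogonal matrices $SO(3)\subseteq U(3)$ act by the adjoint representation, which surjects onto $SO(3)$. Injectivity comes from connectedness, so $\Aut(R)\iso SO(3)$.

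\textbf{Expected obstacle.} The main obstacle I expect is the exclusion of odd permutations for $0<\theta<\pi/2$. The $p_R$ data alone only give $\Aut(R)\le S_4$, so a second invariant is needed. I plan to use $\mathrm{Im}\,\tr(xyz)$, and the nontrivial point is checking that it is a nonzero multiple of $\sin\theta$ times the determinant. Secondary care is needed in choosing the diagonal correction $D$ for the cyclic permutation.
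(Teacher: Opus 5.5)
Your proof is correct. Its skeleton matches the paper's:
\begin{itemize}
\item the tetrahedral maximum set of $p_R$ together with \cref{kernel} embeds $\Aut(R)$ in $S_4$;
\item sign matrices and a twisted cyclic permutation give $A_4$ (your $D$ can be taken to be $\diag(1,e^{i\theta},1)$, so $DP$ is the paper's unitary up to the scalar $e^{-i\theta/3}$);
\item a permutation matrix supplies the extra transposition at $\theta=0$;
\item the adjoint representation gives $SO(3)$ at $\theta=\pi/2$.
\end{itemize}

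The genuine difference is how you exclude orientation-reversing symmetries. The paper uses two separate commutant arguments. First, a unitary implementing the transposition $(x_1\;x_2)$ would commute with the two elements of $R$ corresponding to the fixed vertices $x_0,x_3$, and these generate $M_3(\CC)$ when $0<\theta<\pi/2$. Second, at $\theta=\pi/2$ a unitary implementing $-1$ would anticommute with $R$, hence commute with every $e_{ij}$, $i\neq j$.

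You instead use the full invariant trilinear form $\tr(xyz)$ on $R_h$. Its real part is totally symmetric and is just the polarization of $p_R$, since $\det x=\tr(x^3)$ for traceless $x\in M_3(\CC)$. Its imaginary part $\frac{1}{2i}\tr(x[y,z])$ is alternating. Because $a_1a_2a_3=(\frac32)^{3/2}e^{-i\theta}e_{22}$, this imaginary part equals $-\frac13(\frac32)^{3/2}\sin\theta\,\det[x\,y\,z]$ in your coordinates.

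This treats $0<\theta<\pi/2$ and $\theta=\pi/2$ uniformly, and it isolates exactly the information that $p_R$ forgets: $\Aut_0(R)\subseteq SO(3)$ precisely when $\sin\theta\neq 0$. The paper's route stays within its panoramic-polynomial-plus-commutant toolkit. The price is case-specific checks, such as verifying that two particular matrices have no common eigenvector and treating inversion separately at $\theta=\pi/2$.

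One small repair: your reason for $R'=\CC 1_3$ is incomplete as stated. Nonzero off-diagonal entries alone do not force a commuting matrix to be scalar. First note that $a_i^2=\frac32(1_3-e_{ii})$, so anything commuting with $R$ is diagonal; then your observation finishes the argument.
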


\begin{proof}
Let $\theta \in [0, \pi/2]$. The quantum graph $\cns{VT}^3_3(\theta)$ is vertex-transitive because the matrices
$$
e^{-i\theta/3}
\begin{bmatrix}
0 & 0 & 1 \\
e^{i \theta} & 0 & 0 \\
0 & 1 & 0 
\end{bmatrix},
\begin{bmatrix}
-1 & 0 & 0\\
0 & 1 & 0\\
0 & 0 & 1
\end{bmatrix}
\in \Stab(\VT^3_3(\theta))
$$
generate a subgroup of $U(3)$ that spans $M_3(\CC)$. Similarly, the defining Hermitian orthonormal basis of $\VT^3_3(\theta)$ generates $M_3(\CC)$ as a unital $\dagger$-algebra, so $\VT^3_3(\theta)$ is connected.

For $\theta \in [0, \pi/2)$, the set $M_\theta \subseteq S^2$ on which the panoramic polynomial $p_\theta(t_1, t_2, t_3) = 3 \sqrt {\frac 3 2} \cos(\theta) t_1 t_2 t_3$ attains its maximum value is $$M_\theta = \{x_0, x_1, x_2, x_3\},$$ 
$$
x_0 = \frac 1 {\sqrt 3}(1,1,1), \qquad \quad x_1 = \frac 1 {\sqrt 3} (1,-1,-1),
$$
$$
x_2 = \frac 1 {\sqrt 3}(-1, 1, -1), \qquad \quad x_3 =\frac 1 {\sqrt 3}(-1, -1, 1).
$$
By \cref{kernel}, $\Aut(\VT^3_3(\theta))$ is isomorphic to the subgroup of $O(M_\theta) \iso S_4$ that is implemented by $\Stab(\VT^3_3(\theta))$.

If $\theta = 0$, then $\Stab(\VT^3_3(\theta)) \iso S_4$, because the matrices
$$
\begin{bmatrix}
0 & 1 & 0 \\
1 & 0 & 0 \\
0 & 0 & 1
\end{bmatrix},
\begin{bmatrix}
0 & 0 & 1 \\
1 & 0 & 0 \\
0 & 1 & 0
\end{bmatrix},
\begin{bmatrix}
- 1 & 0 & 0 \\
0 & 1 & 0 \\
0 & 0 & 1
\end{bmatrix}
\in \Stab(\VT^3_3(0))
$$
implement the permutations $(x_1\; x_2)$, $(x_1\; x_2 \; x_3)$ and $(x_0\; x_1) (x_2\; x_3)$ of $M_0$, respectively, and together, these permutations generate $S_4$.

Assume $0 < \theta < \pi/2$. The matrices
$$
e^{-i\theta/3}
\begin{bmatrix}
0 & 0 & 1 \\
e^{i \theta} & 0 & 0 \\
0 & 1 & 0 
\end{bmatrix},
\begin{bmatrix}
- 1 & 0 & 0 \\
0 & 1 & 0 \\
0 & 0 & 1
\end{bmatrix}
\in \Stab(\VT^3_3(\theta))
$$
implement permutations $(x_1\; x_2 \; x_3)$ and $(x_0\; x_1) (x_2\; x_3)$ of $M_\theta$, respectively, and together these permutations generate $A_4 \subseteq S_4$. To show that $\Stab(\VT^3_3(\theta))$ does not implement $S_4$, it is sufficient to find a single permutation of $M_\theta$ that $\Stab(\VT^3_3(\theta))$ does not implement.

Suppose that there is a unitary $u \in \Stab(\VT^3_3(\theta))$ that implements the permutation $(x_1\; x_2)$. By \cref{kernel}, it commutes with both
$$
\begin{bmatrix}
0 &e^{-i\theta} & 1 \\
e^{i \theta} & 0 & 1\\
1 & 1 & 0
\end{bmatrix},
\begin{bmatrix}
0 &e^{-i\theta} & -1 \\
e^{i \theta} & 0 & -1\\
-1 & -1 & 0
\end{bmatrix}
\in \VT^3_3(\theta),
$$
which together generate $M_3(\CC)$ as a unital $\dagger$-algebra because they have no common eigenvector. Thus, $u \in \CC 1_3$. This conclusion contradicts our choice of $u$. Hence, there does not exist a unitary $u \in \Stab(\VT^3_3(\theta))$ that implements the permutation $(x_1\; x_2)$. Therefore, $\Aut(\VT^3_3(\theta)) \iso A_4$.

The remaining case is $\theta = \pi /2$. We compute the panoramic polynomial of the quantum graph
\begin{equation*}
R =
\CC \sqrt{\frac 3 2}
\begin{bmatrix}
0 & 0 & 0 \\
0 & 0 & -i \\
0 & i & 0
\end{bmatrix}
+
\CC \sqrt{\frac 3 2}
\begin{bmatrix}
0 & 0 & i \\
0 & 0 & 0 \\
-i & 0 & 0
\end{bmatrix}
+
\CC \sqrt{\frac 3 2}
\begin{bmatrix}
0 & -i & 0 \\
i & 0 & 0 \\
0 & 0 & 0
\end{bmatrix},
\end{equation*}
and find that it is zero. By \cref{VT33}, $R \iso \VT^3_3(\pi/2)$, so $\Aut(\VT^3_3(\pi/2)) \iso \Aut(R)$. Further, by \cref{kernel}, $\Aut(R)$ is isomorphic to the subgroup of $O(M_{\pi/2}) \iso O(3)$ that is implemented by $\Stab(R)$. 

The real subspace $R_h$ that consists of Hermitian matrices in $R$ is the Lie algebra $\mathfrak{so}(3)$. For $u \in SO(3) \subseteq \Stab(R)$, the map $u \mapsto \Ad_R(u)$ is the adjoint representation of $SO(3)$, so $\Stab(R)$ implements the subgroup $SO(3) \subseteq O(3)$. To show that $\Stab(R)$ does not implement $O(3)$, it is sufficient to find a single orthogonal transformation that $\Stab(R)$ does not implement.

Suppose that there is a unitary $u \in \Stab(R)$ that implements inversion through the origin. This unitary $u$ anticommutes with each of the defining basis elements of $R$ and, hence, commutes with each elementary matrix $e_{ij}$ for $i \neq j$. Thus, $u \in \CC 1_3$. This conclusion contradicts our choice of $u$. Hence, there does not exist a unitary $u \in \Stab(R)$ that implements inversion through the origin. Therefore $\Aut(\VT^3_3(\pi/2)) \iso \Aut(R) \iso SO(3)$.
\end{proof}

\section{$4$-dimensional quantum graphs in $M_3(\CC)$}

In this section, we establish that there exists a unique vertex-transitive quantum graph $R \leq M_3(\CC)$ such that $\cns{dim}(R) = 4$. We leave the existence of a regular quantum graph $R \leq M_3(\CC)$ that is not vertex-transitive unresolved.

\begin{proposition}\label{regular dim4 is connected}
Let $R \leq M_3(\CC)$ be a regular quantum graph. If $\dim(R) = 4$, then $R$ is connected.
\end{proposition}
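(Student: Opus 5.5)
To show that $R$ is connected, we show that $R' = \CC 1_3$. This suffices: $R$ is $\dagger$-closed, so $R''$ is the von Neumann algebra generated by $1_3$ and $R$, and $R'' = M_3(\CC)$ exactly when $R' = \CC 1_3$. The argument is by contradiction. We show that a disconnected $4$-dimensional quantum graph in $M_3(\CC)$ is forced to be one specific subspace, and then we check that this subspace is not regular.

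Suppose that $R' \neq \CC 1_3$. Since $R$ is $\dagger$-closed, $R'$ is a unital $\dagger$-subalgebra of $M_3(\CC)$ strictly larger than $\CC 1_3$. Such an algebra contains a projection $p \notin \{0, 1_3\}$. Replacing $p$ by $1_3 - p$ if necessary, we may assume that $p$ has rank one. Conjugating by a unitary, we may assume that $p = e_{33}$; this changes neither regularity nor the dimension of $R$.

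Every $a \in R$ commutes with $e_{33}$, so $R$ is contained in the block-diagonal algebra $M_2(\CC) \oplus M_1(\CC)$, which has dimension $5$. Every element of $R$ is traceless. Hence $R$ is contained in the $4$-dimensional subspace $S$ of traceless block-diagonal matrices. Since $\dim(R) = 4$, we conclude that $R = S$. The remaining step is to compute $\deg(S)$ with an explicit Hermitian orthonormal basis. For this we use the extended Pauli matrices $\sqrt{3/2}\,(\hat x \oplus 0)$, $\sqrt{3/2}\,(\hat y \oplus 0)$, $\sqrt{3/2}\,(\hat z \oplus 0)$ together with $\sqrt{1/2}\,\diag(1,1,-2)$.

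One checks with the normalized trace that these four matrices are traceless, norm-one, pairwise orthogonal and Hermitian, so they form a Hermitian orthonormal basis of $S$. Their squares are $\frac 3 2 (1_2 \oplus 0)$ for each of the three Pauli elements and $\frac 1 2 \diag(1,1,4)$ for the last element. Summing gives
$$
\deg(S) = \tfrac 9 2 (1_2 \oplus 0) + \tfrac 1 2 \diag(1,1,4) = \diag(5,5,2),
$$
which is not a scalar multiple of $1_3$. Since $\deg(R)$ does not depend on the choice of orthonormal basis, this contradicts the regularity of $R$. Therefore $R' = \CC 1_3$, and $R$ is connected.

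The only step that needs care is the reduction to a rank-one projection $p$ in $R'$ together with the dimension count that forces $R = S$. Once that is in place, the rest is the direct computation above.
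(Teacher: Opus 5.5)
Your proof is correct and follows essentially the same route as the paper: a disconnected $R$ lies in a unitary conjugate of $M_2(\CC) \oplus \CC$, the dimension count forces $R$ to be exactly the traceless matrices there, and that subspace is not regular. You make explicit two things the paper leaves implicit: the reduction via a rank-one projection in $R'$, which avoids the paper's unargued claim that $R'' \iso M_2(\CC) \oplus \CC$ rather than a smaller algebra, and the computation $\deg(S) = \diag(5,5,2)$.
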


\begin{proof}
Assume that $\dim(R) = 4$, and suppose that $R$ is not connected. It follows that $R'' \iso M_2(\CC) \oplus \CC$. We may assume without loss of generality that $R'' = M_2(\CC) \oplus \CC$. Since $R$ is a four-dimensional quantum graph such that $R \leq R''$, it consists of exactly the traceless matrices in $R''$. This quantum graph is not regular, contradicting our choice of $R$.
\end{proof}

\begin{proposition}\label{no nilpotents}
Let $R \leq M_3(\CC)$ be a quantum graph such that $\dim(R) = 4$. If $R$ is regular, then $a^2 = 0$ implies $a = 0$ for all $a \in R$.
\end{proposition}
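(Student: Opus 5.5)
The plan is to argue by contradiction: assume $R$ is regular and contains a nonzero $a$ with $a^2=0$, use this nilpotent to put $R$ in a normal form, and then show that regularity forces the remaining two basis elements to be proportional.

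\textbf{Normal form.} Suppose $a \in R$ is nonzero with $a^2 = 0$. In $M_3(\CC)$ such an $a$ has rank one, since $\operatorname{im}(a) \subseteq \ker(a)$ and $\dim\ker(a) \geq 2$. So $a = x y^\dagger$ with $y \perp x$. After a unitary change of basis and rescaling, $a = e_{12}$. Since $R^\dagger = R$, also $e_{21} \in R$. Hence $R$ contains the orthonormal Hermitian pair
$$\sqrt{\tfrac 3 2}(e_{12}+e_{21}), \qquad \sqrt{\tfrac 3 2}\, i(e_{21}-e_{12}).$$
Their squares sum to $3(e_{11}+e_{22})$. Complete this pair to a Hermitian orthonormal basis of $R$ by adding $b, c$.

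\textbf{Equations from regularity.} Each basis element has $\tr(a_i^2)=1$, so regularity gives $\deg(R) = 4\cdot 1_3$. Therefore
$$b^2 + c^2 = \diag(1,1,4).$$
Orthogonality to $e_{12}$ and $e_{21}$ forces $b_{12} = c_{12} = 0$. Write
$$b = \begin{bmatrix} \beta_1 & 0 & u_1 \\ 0 & \beta_2 & u_2 \\ \overline{u}_1 & \overline{u}_2 & \beta_3 \end{bmatrix}$$
with $\beta_3 = -(\beta_1+\beta_2)$ by tracelessness, and write $c$ likewise with entries $\gamma_j$ and $v_j$. Reading off entries of $b^2+c^2 = \diag(1,1,4)$ gives the following.
\begin{enumerate}
\item From the $(1,3)$ and $(2,3)$ entries, using $\beta_1+\beta_3 = -\beta_2$ and similar identities: $\beta_2 u_1 + \gamma_2 v_1 = 0$ and $\beta_1 u_2 + \gamma_1 v_2 = 0$.
\item From the $(1,2)$ entry: $u_1\overline{u}_2 + v_1 \overline{v}_2 = 0$.
\item From the $(1,1)$ and $(2,2)$ entries: $\beta_1^2+\gamma_1^2+|u_1|^2+|v_1|^2 = 1$, and the same with index $2$.
\item Subtracting the sum of the $(1,1)$ and $(2,2)$ entries from the $(3,3)$ entry, and substituting $\beta_3 = -(\beta_1+\beta_2)$, $\gamma_3 = -(\gamma_1+\gamma_2)$: $\beta_1\beta_2 + \gamma_1\gamma_2 = 1$.
\end{enumerate}

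\textbf{Key step: a Cauchy--Schwarz squeeze.} Set $p_j = (\beta_j, \gamma_j) \in \RR^2$ for $j \in \{1,2\}$. Equation 4 says $p_1\cdot p_2 = 1$, while equation 3 gives $|p_1|, |p_2| \leq 1$. By Cauchy--Schwarz this forces $|p_1| = |p_2| = 1$ and $p_1 = p_2$. Equation 3 then gives $u_j = v_j = 0$ for $j \in \{1,2\}$. Consequently $b = \beta_1\diag(1,1,-2)$ and $c = \gamma_1 \diag(1,1,-2)$ are linearly dependent, contradicting orthonormality. Hence no such $a$ exists.

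I expect the main obstacle to be the bookkeeping that isolates the scalar identity $\beta_1\beta_2+\gamma_1\gamma_2=1$, since that is what makes the squeeze work. Equations 1 and 2 end up unused, but they are available as a fallback if a sign or normalization slip appears in the diagonal computation.
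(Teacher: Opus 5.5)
Your proof is correct, but its key step differs from the paper's. The setup is the same in both. The nilpotent has rank one, so $R$ contains a Hermitian orthonormal pair spanning $\CC e_{12} + \CC e_{21}$ (after a unitary change of coordinates). Regularity then forces the two remaining basis elements $b, c$ to satisfy $b^2 + c^2 = \diag(1,1,4)$.

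From there the paper uses Lemma~\ref{singular} to take $b$ singular. It then invokes an extremal fact, stated without proof as a ``standard optimization argument'': a norm-one traceless Hermitian $3 \times 3$ matrix has operator norm at most $\sqrt 2$, with equality only when its spectrum is proportional to $\{1,1,-2\}$ up to sign. Since $\|b^2 + c^2\|_\infty = 4$ forces $\|b\|_\infty = \sqrt 2$, the matrix $b$ would be invertible, which is a contradiction.

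You instead work in coordinates:
\begin{itemize}
\item orthogonality to $e_{12}$ and $e_{21}$ removes those entries;
\item tracelessness turns the $(3,3)$ entry minus the $(1,1)$ and $(2,2)$ entries into $\beta_1\beta_2 + \gamma_1\gamma_2 = 1$;
\item Cauchy--Schwarz against the diagonal bounds forces $b$ and $c$ to be multiples of $\diag(1,1,-2)$, so they are linearly dependent.
\end{itemize}

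What each route buys: yours is self-contained. It proves inline the equality case of the norm bound that is actually needed, and it makes Lemma~\ref{singular} unnecessary, at the cost of some entrywise bookkeeping. The paper's route is coordinate-free and shorter, but it relies on the unproved extremal bound and the auxiliary singularity lemma.

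You are right that your equations~1 and~2 are never used. One small point: justify the rank-one claim by $\rank(a) \le \dim\ker(a) = 3 - \rank(a)$, rather than asserting $\dim\ker(a) \ge 2$ at the outset, which is equivalent to what you want to show.
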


\begin{proof}
Assume that $R$ is regular. Let $a \in R$, and assume $a^2 =0$. It follows that the range of $a$ is a subspace of the kernel of $a$, so the rank of $a$ is at most $1$. Suppose that $\rank(a) = 1$. We may assume without loss of generality that $a = \psi_1 \psi_2^\dagger$ for some unit vectors $\psi_1,\psi_2 \in \CC^3$ such that $\psi_1^\dagger \psi_2 = 0$.

The matrices $$\textstyle a_1 = \sqrt{\frac 3 2} (\psi_1 \psi_2^\dagger + \psi_2 \psi_1^\dagger), \qquad \quad a_2 = \sqrt{\frac 3 2} (i \psi_1 \psi_2^\dagger - i \psi_2 \psi_1^\dagger)$$ are Hermitian and norm-one, and both are clearly elements of $R$. We extend this pair to a Hermitian orthonormal basis $(a_1, a_2, a_3, a_4)$ of $R$. We may assume without loss of generality that $a_3$ is singular by applying \cref{singular} to the quantum graph $\CC a_3 + \CC a_4$.

The regularity of $R$ implies that 
$$
3 \psi_1 \psi_1^\dagger + 3 \psi_2 \psi_2^\dagger + a_3^2 + a_4^2 = 4 1_3,
$$
$$
a_3^2 + a_4^2 = \psi_1 \psi_1^\dagger + \psi_2 \psi_2^\dagger + 4 \psi_3 \psi_3^\dagger,
$$
where $(\psi_1, \psi_2, \psi_3)$ is an orthonormal basis of $\CC^3$. By a standard optimization argument, $\|b\|_\infty \leq \sqrt 2$ for norm-one Hermitian traceless matrices $b \in M_3(\CC)$ with equality iff $\cns{Spec}(b)$ is $\{-1, -1, 2\}$ or $\{1, 1, -2\}$ with multiplicity. It follows that the equation $\|a_3^2 + a_4^2\|_\infty = 4$ contradicts our assumption that $a_3$ is singular. Therefore, $\rank(a) =0$.
\end{proof}

In this section, $\omega = - \frac 1 2 + i \frac {\sqrt 3} 2$ denotes the standard primitive third root of unity. Further, $\hat u$ and $\hat v$ denote the corresponding shift and clock matrices
$$
\hat u =
\begin{bmatrix}
0 & 0 & 1 \\
1 & 0 & 0 \\
0 & 1 & 0
\end{bmatrix},
\quad \qquad
\hat v = 
\begin{bmatrix}
1 & 0 & 0 \\
0 & \omega & 0 \\
0 & 0 & \omega^2
\end{bmatrix},
$$
which are unitary matrices that satisfy $\hat u^3 = 1_3 = \hat v^3$ and $\hat v \hat u = \omega \hat u  \hat v$. The corresponding discrete Fourier transform matrix is the unitary
$$
\hat w  =
\frac 1 {\sqrt 3}
\begin{bmatrix}
1 & 1 & 1 \\
1 & \omega & \omega^2 \\
1 & \omega^2 & \omega
\end{bmatrix}.
$$
It satisfies $\hat w \hat u \hat w^\dagger = \hat v$, $\hat w \hat v \hat w^\dagger = \hat u^2$, and $\hat w^4 = 1_3$. Thus, we have
$$
\CC \hat u + \CC \hat u^2 \iso \CC \hat v + \CC \hat v^2  = \cns{VT}^3_2.
$$

\begin{proposition}\label{VT34}
The quantum graph $$\cns{VT}^3_4 = \CC \hat u + \CC \hat u^2 + \CC \hat v + \CC \hat v^2$$ is vertex-transitive and connected. Its panoramic polynomial is $$p (t_1, t_2, t_3, t_4) = \frac 1 {\sqrt 2}(t_1^3 - 3 t_1 t_2^2 + t_3^3 - 3  t_3 t_4^2).$$
Furthermore, $\Stab(\cns{VT}^3_4)$ is generated by $\hat u$, $\hat v$, $\hat w$, and the unitary scalars, and $$\Aut(\cns{VT}^3_4) \iso (C_3 \wr S_2) \cap A_6 \leq S_6.$$
In other words, $\Aut(\cns{VT}^3_4)$ is isomorphic to the group of automorphisms of the disjoint union graph $C_3 + C_3$ that are even as permutations.
\end{proposition}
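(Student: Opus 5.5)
The plan is to verify the structural claims directly from the clock-and-shift relations, compute the panoramic polynomial in an explicit Hermitian basis, and then determine $\Aut(\VT^3_4)$ through \cref{kernel}, using the maximum set $M$ of $p$.

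\textbf{Basic properties.} The matrices $\hat u, \hat u^2, \hat v, \hat v^2$ are traceless and pairwise orthogonal for $\<a|b\> = \tr(a^\dagger b)$, and they are unitary, so each has norm one. Since $\hat u^\dagger = \hat u^2$ and $\hat v^\dagger = \hat v^2$, the subspace $R = \VT^3_4$ is a quantum graph. A Hermitian orthonormal basis is
$$a_1 = \tfrac{1}{\sqrt 2}(\hat u + \hat u^2), \quad a_2 = \tfrac{i}{\sqrt 2}(\hat u - \hat u^2), \quad a_3 = \tfrac{1}{\sqrt 2}(\hat v + \hat v^2), \quad a_4 = \tfrac{i}{\sqrt 2}(\hat v - \hat v^2).$$
The relation $\hat v \hat u = \omega \hat u \hat v$ gives $\hat u R \hat u^\dagger = R$ and $\hat v R \hat v^\dagger = R$, because $\hat v$ and $\hat v^2$ are only rescaled by powers of $\omega$ under conjugation by $\hat u$, and symmetrically. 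The identities $\hat w \hat u \hat w^\dagger = \hat v$ and $\hat w \hat v \hat w^\dagger = \hat u^2$ give $\hat w \in \Stab(R)$. The nine products $\hat u^j \hat v^k$ form a basis of $M_3(\CC)$, so $\Stab(R)$ spans $M_3(\CC)$ and $R$ is vertex-transitive. Since $\hat u, \hat v \in R$ generate $M_3(\CC)$ as an algebra, $R'' = M_3(\CC)$ and $R$ is connected.

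\textbf{Panoramic polynomial.} I would write $t_1 a_1 + t_2 a_2 = \alpha \hat u + \bar\alpha \hat u^2$ and $t_3 a_3 + t_4 a_4 = \beta \hat v + \bar\beta \hat v^2$, and expand $\det(\alpha \hat u + \bar\alpha \hat u^2 + \beta \hat v + \bar\beta \hat v^2)$ by the Leibniz formula. The matrix has diagonal $\beta\omega^k + \bar\beta\omega^{2k}$ and circulant off-diagonal entries $\alpha$ and $\bar\alpha$. The mixed terms should cancel because $\sum_k \omega^k = 0$. What remains is $\det$ of the $\hat u$-part plus $\det$ of the $\hat v$-part. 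Each of these is the panoramic polynomial of a copy of $\VT^3_2$, via $\hat w$ and \cref{VT32}, possibly after an orthogonal change of variables.

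\textbf{Automorphism group.} Write $f(s_1,s_2) = \tfrac{1}{\sqrt 2}(s_1^3 - 3 s_1 s_2^2)$, so $p(t) = f(t_1,t_2) + f(t_3,t_4)$. On the unit sphere, $p \le (r^3 + s^3)/\sqrt 2$ with $r^2 + s^2 = 1$, and this is maximized only at $r = 1$ or $s = 1$. Hence $M$ consists of two equilateral triangles lying in the orthogonal planes $\mathrm{span}(e_1,e_2)$ and $\mathrm{span}(e_3,e_4)$. This set spans $\RR^4$, so by \cref{kernel} $\Aut(R)$ embeds in $O(M)$. The group $O(M)$ is the automorphism group $D_3 \wr S_2$ of $C_3 + C_3$, of order $72$. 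I would then compute the images of the generators.
\begin{itemize}
\item $\hat u$ rotates the $\hat v$-triangle and fixes the other; $\hat v$ does the same with the roles exchanged. Together they give $C_3 \times C_3$.
\item $\hat w^2$ reflects both triangles, which is an even permutation.
\item $\hat w$ swaps the two triangles, composed with a reflection.
\end{itemize}
These generate a subgroup of order $36$, namely $(C_3 \wr S_2) \cap A_6$.

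\textbf{Main obstacle: excluding the odd elements.} It suffices to exclude a single reflection of one triangle that fixes the other. Such an element would be implemented by $x \in \Stab(R)$ with $x \hat u x^\dagger = \hat u^2$ and $x \hat v x^\dagger = \hat v$. Conjugating $\hat v \hat u = \omega \hat u \hat v$ by $x$ would give $\hat v \hat u^2 = \omega \hat u^2 \hat v$, whereas in fact $\hat v \hat u^2 = \omega^2 \hat u^2 \hat v$, a contradiction. More conceptually, conjugation preserves the commutation phase, and the odd elements of $O(M)$ invert it. The same argument should exclude every odd element. In other variants the target would be $\lambda \hat u$ rather than $\hat u$, but the same phase computation applies.

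\textbf{Generators of $\Stab(R)$.} Let $x \in \Stab(R)$. Composing with a word in $\hat u, \hat v, \hat w$ with the same image in $O(M)$, the product implements the identity on $M$. By \cref{kernel}, using connectedness and the fact that $M$ spans $\RR^4$, the product is a unitary scalar. Hence $\Stab(R)$ is generated by $\hat u$, $\hat v$, $\hat w$, and the unitary scalars, and $\Aut(\VT^3_4) \iso (C_3 \wr S_2) \cap A_6$.
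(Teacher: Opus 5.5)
Your proof is correct, and its structure matches the paper's. Both proofs embed $\Aut(\VT^3_4)$ into $O(M)$ via \cref{kernel}, check that $\hat u$, $\hat v$, $\hat w$ implement the index-two subgroup of even permutations, exclude a single odd element, and then lift back to generators of $\Stab(\VT^3_4)$. You even exclude the same element as the paper: the reflection $(x_2\; x_3)$ of the $\hat u$-triangle that fixes the $\hat v$-triangle pointwise.

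The genuine difference is how that exclusion is argued. The paper observes that an implementing unitary would have to commute with $\hat v$, because it fixes the whole $\hat v$-plane, and with $\hat u + \hat u^2$. Such a unitary is diagonal and then scalar, so it implements the identity. This is the same commutant-style reasoning the paper uses for $\VT^3_3(\theta)$. You instead use the fact that conjugation preserves the relation $\hat v \hat u = \omega \hat u \hat v$. The reflection would force $\hat v \hat u^2 = \omega \hat u^2 \hat v$, whereas in fact $\hat v \hat u^2 = \omega^2 \hat u^2 \hat v$.

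Your route is purely algebraic and explains conceptually why exactly the even elements survive: they are precisely the elements of $O(M)$ that preserve the commutation phase rather than invert it. The paper's route fits the template it applies uniformly elsewhere. Your Leibniz computation and your bound $r^3 + s^3 \le r^2 + s^2$ also fill in steps the paper only asserts. In the computation, the only mixed terms are $-|\alpha|^2\sum_k(\beta\omega^k + \bar\beta\omega^{2k}) = 0$, and the bound is what locates $M$.
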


\begin{proof}
The unitaries $\hat u$ and $\hat v$ generate $M_3(\CC)$ as a unital $\dagger$-algebra. Since $\hat u, \hat v \in \VT^3_4$, this implies that $\VT^3_4$ is connected. Since $\hat u, \hat v \in \Stab(\VT^3_4)$, this implies that $\VT^3_4$ is vertex-transitive. In fact, we will show that $\hat u, \hat v, \hat w \in \Stab(\VT^3_4)$ implement generators of $\Aut(\VT^3_4)$.

The quantum graph $\VT^3_4$ can be presented as
$$
\VT^3_4 = \CC \frac 1 {\sqrt 2}(\hat u + \hat u^2) + \CC \frac 1 {i \sqrt 2} (\hat u - \hat u^2) + \CC \frac 1 {\sqrt 2}(\hat v + \hat v^2) + \CC \frac 1 {i \sqrt 2} (\hat v - \hat v^2).
$$
In this Hermitian orthonormal basis, the panoramic polynomial of $\VT^3_4$ can be calculated to be
$p (t_1, t_2, t_3, t_4) = \frac 1 {\sqrt 2}(t_1^3 - 3 t_1 t_2^2 + t_3^3 - 3  t_3 t_4^2).$ The set $M \subseteq S^3$ on which the panoramic polynomial attains its maximal value is
$$
M = \{x_1, x_2, x_3, x_4, x_5, x_6\},
$$
$$
x_1 = (1,0,0,0), \qquad x_2 = \frac 1 2 (-1, \sqrt 3, 0 ,0), \qquad x_3 = \frac 1 2 (-1, -\sqrt 3, 0, 0),
$$
$$
x_4 = (0, 0, 1, 0), \qquad x_5 = \frac 1 2 (0 ,0, -1, \sqrt 3), \qquad x_6 = \frac 1 2 (0, 0, -1, - \sqrt 3).
$$
By \cref{kernel}, $\Aut(\VT^3_4)$ is isomorphic to the subgroup of $O(M) \iso C_3 \wr S_2$ that is implemented by $\Stab(\VT^3_4)$. The unitaries $\hat u, \hat v, \hat w \in \Stab(\VT^3_4)$ implement the permutations $(x_4\; x_5\; x_6), (x_1\; x_3 \;x_2), (x_1\; x_4)(x_2\; x_5 \; x_3\; x_6) \in  S_6,$ respectively, which generate the subgroup of even permutations in $C_3 \wr S_2 \leq S_6$. To show that $\Stab(\VT^3_4)$ does not implement $C_3 \wr S_2$, it is sufficient to find a single element of $C_3 \wr S_2$ that $\Stab(\VT^3_4)$ does not implement.

Suppose that there is a unitary $u \in \Stab(\VT^3_4)$ that implements the permutation $(x_2 \; x_3)$. By \cref{kernel}, the unitary $u$ commutes with 
$$
\hat u + \hat u^2,\, \hat v + \hat v^2,\, \omega^2 \hat v + \omega \hat v^2,\, \omega \hat v + \omega^2 \hat v ^2 \in \VT^3_4.
$$
In particular, $u$ is a diagonal unitary that commutes with $\hat u + \hat u^2$, so $u \in \TT 1_3$. This conclusion contradicts our choice of $u$. Hence, there does not exist a unitary $u \in \Stab(\VT^3_4)$ that implements the permutation $(x_2\; x_3)$. Therefore, $\Aut(\VT^3_4) \iso (C_3 \wr S_2) \cap A_6$. Since $\hat u$, $\hat v$, and $\hat w$ implement generators of $O(M)$, they also implement generators of $\Aut(\VT^3_4)$, so $\Stab(\VT^3_4)$ is generated by $\hat u$, $\hat v$, $\hat w$, and the scalar unitaries by \cref{kernel}.
\end{proof}

\begin{theorem}\label{dim2 dim4}
Let $S \leq R \leq M_3(\CC)$ be regular quantum graphs. If $\dim(S) = 2$ and $\dim(R) = 4$, then $S \iso \cns{VT}^3_2$ and $R \iso \cns{VT}^3_4$.
\end{theorem}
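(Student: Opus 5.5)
By \cref{VT32}, $S \iso \VT^3_2$, so after conjugating by a unitary we may assume that $S$ is the space of traceless diagonal matrices. Let $T = \{a \in R \mid a \perp S\}$. Then $T$ is a $\dagger$-closed, two-dimensional subspace of traceless matrices. It is orthogonal to every diagonal matrix (to $1_3$ by tracelessness, and to $S$ by construction), so every element of $T$ has zero diagonal. A Hermitian orthonormal basis of $S$ together with one of $T$ is a Hermitian orthonormal basis of $R$. Hence $\deg(R) = \deg(S) + \deg(T)$, and since $\deg(R) = 4 \cdot 1_3$ and $\deg(S) = 2 \cdot 1_3$, we get $\deg(T) = 2 \cdot 1_3$. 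Thus $T$ is itself a regular quantum graph of dimension $2$. Applying \cref{VT32} again, $T = w\, \VT^3_2\, w^\dagger$ for some unitary $w$. That is, $T$ is the traceless part of the maximal abelian $\dagger$-subalgebra diagonal in an orthonormal basis $f_1, f_2, f_3$, where the $f_k$ are the columns of $w$.

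Next I translate the zero-diagonal condition into a condition on $w$. For a traceless real diagonal $d = \diag(d_1,d_2,d_3)$, the $j$th diagonal entry of $w d w^\dagger$ is $\sum_k |f_k(j)|^2 d_k$. This vanishes for all traceless $d$ exactly when $|f_k(j)|^2$ is independent of $k$, i.e. $|f_k(j)|^2 = 1/3$ for all $j,k$. So $\sqrt 3\, w$ is a $3 \times 3$ complex Hadamard matrix; equivalently, the basis $(f_k)$ is unbiased with respect to the standard basis.

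The main step is to classify such $w$. I expect this to be the main obstacle, although in dimension $3$ it is elementary. After multiplying the rows and columns of $w$ by phases, we may assume the first row and first column of $\sqrt 3\, w$ consist of $1$'s. Orthogonality of the first column with each other column then says that three unimodular numbers, the first being $1$, sum to zero. This forces the remaining two entries to be $\omega$ and $\omega^2$ in some order, so $w = D_1 P \hat w P' D_2$ with $D_1, D_2$ diagonal unitaries and $P, P'$ permutation matrices. The factors $P' D_2$ normalize the diagonal algebra, so they do not change $T$. Also $\hat w^\dagger P \hat w$ is, up to a scalar, a product of powers of $\hat u$, $\hat v$ and $\hat w^2$, all of which normalize the diagonal algebra; hence $P$ may be absorbed as well. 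Since $\hat w\, \VT^3_2\, \hat w^\dagger = \CC \hat u + \CC \hat u^2$, we obtain $T = D_1(\CC \hat u + \CC \hat u^2) D_1^\dagger$.

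Finally, conjugating everything by $D_1^\dagger$ leaves $S$ unchanged, since diagonal unitaries fix $S$, and turns $T$ into $\CC \hat u + \CC \hat u^2$. Therefore $R = S + T = \CC \hat v + \CC \hat v^2 + \CC \hat u + \CC \hat u^2 = \VT^3_4$, using $\VT^3_2 = \CC \hat v + \CC \hat v^2$. Throughout, $S$ is carried to $\VT^3_2$, so both claimed isomorphisms follow.
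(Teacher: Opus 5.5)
Your proof is correct and follows the paper's own argument. You split $R = S \oplus T$ with $T$ regular and two-dimensional, identify both $S$ and $T$ with $\VT^3_2$, observe that the conjugating unitary is a rescaled $3\times 3$ complex Hadamard matrix, and use the classification of such matrices to move $T$ onto $\CC \hat u + \CC \hat u^2$ while fixing $S$. The only differences are minor: you verify the Hadamard condition and the $3\times 3$ classification by hand where the paper cites them, and you absorb the left permutation factor into $T$ rather than conjugating all of $R$ by $d_1 p_1$.
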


\begin{proof}
Assume that $\dim(S) = 2$ and $\dim(R) = 4$. We immediately infer that $S \iso \cns{VT}^3_2$ by \cref{VT32}. To show that $R \iso \cns{VT}^3_4$, we first extend a Hermitian orthonormal basis $\{a_1, a_2\}$ of $S$ to a Hermitian orthonormal basis $\{a_1, a_2, a_3, a_4\}$ of $R$. Because both $S$ and $R$ are regular, $a_1^2 + a_2^2 = 2 1_3$ and $a_1^2 + a_2^2 + a_3^2 + a_4^2 = 4 1_3$, so $a_3^2 + a_4^2 = 2 1_3$. Thus, $T = \CC a_3 + \CC a_4$ is a regular quantum graph such that $S \perp T$ and $S + T = R$. In other words, $\CC 1_3 + S$ and $\CC 1_3 + T$ are quasiorthogonal \cite{zbMATH08105222, zbMATH05724597, zbMATH05258254} unital $\dagger$-subalgebras of $M_3(\CC)$.

By \cref{VT32}, $T \iso \cns{VT}^3_2 \iso S$, so there exists a unitary $w \in U(3)$ such that $T = w S w^\dagger$ and, hence, such that $\CC 1_3 + T = w( \CC 1_3 + S)w^\dagger$. We may assume without loss of generality that $S = \cns{VT}^3_2$, which implies that $\CC 1_3 + S$ is the unital $\dagger$-algebra of diagonal matrices in $M_3(\CC)$. By \cite[Proposition~2.2]{zbMATH08105222}, it follows that $\sqrt 3 w$ is a complex Hadamard matrix. By \cite[sec.~5]{zbMATH05070807}, it follows that $\sqrt 3 w$ is equivalent to $\sqrt 3 \hat w$ in the sense that $
\sqrt 3 w = d_1 p_1 (\sqrt 3 \hat w) p_2 d_2$ for some diagonal unitaries $d_1, d_2 \in M_3(\CC)$ and permutation matrices $p_1, p_2 \in M_3(\CC)$.

We conclude the proof by computing that
\begin{align*}
R & = S + T 
\\ & \iso
(\CC \hat v + \CC \hat v^2) + w (\CC \hat v + \CC \hat v^2) w^\dagger
\\ & =
(\CC \hat v + \CC \hat v^2) + d_1 p_1 \hat w p_2 d_2 (\CC \hat v + \CC \hat v^2) d_2^\dagger p_2^\dagger \hat w^\dagger p_1^\dagger d_1^\dagger
\\ & =
(\CC \hat v + \CC \hat v^2) + d_1 p_1 \hat w (\CC \hat v + \CC \hat v^2) \hat w^\dagger p_1^\dagger d_1^\dagger
\\ & = 
d_1 p_1 (\CC \hat v + \CC \hat v^2) p_1^\dagger d_1^\dagger + d_1 p_1 (\CC \hat u + \CC \hat u^2) p_1^\dagger d_1^\dagger
\\ & =
d_1 p_1(\CC \hat v + \CC \hat v^2 + \CC \hat u + \CC \hat u^2) p_1^\dagger d_1^\dagger,
\end{align*}
which implies that $R \iso \CC \hat v + \CC \hat v^2 + \CC \hat u + \CC \hat u^2$ by \cref{defn. iso}.
\end{proof}

\begin{corollary}\label{VT34 complement}
Let $R \leq M_3(\CC)$ be a quantum graph. If $R \iso \cns{VT}^3_4$, then $\bar R \iso \cns{VT}^3_4$.
\end{corollary}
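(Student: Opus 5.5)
Since isomorphism is preserved by complementation ($\bar R \iso \bar S$ iff $R \iso S$), it suffices to treat $R = \VT^3_4$ itself. The plan is to show that $\overline{\VT^3_4}$ is a four-dimensional regular quantum graph that contains a regular two-dimensional quantum graph. Then \cref{dim2 dim4} gives $\overline{\VT^3_4} \iso \VT^3_4$.

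First, $\dim \overline{\VT^3_4} = 8 - 4 = 4$. Next, $\VT^3_4$ is vertex-transitive by \cref{VT34}, hence regular by \cref{vertex-transitive regular}. Its complement is therefore regular by \cref{complement}(5). (Alternatively, vertex-transitivity passes to the complement by \cref{complement}(4).)

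Next we identify the complement explicitly using the clock and shift unitaries. The matrices $\hat u^j \hat v^k$ for $(j,k) \in \{0,1,2\}^2$ form an orthonormal basis of $M_3(\CC)$ with respect to $\<a|b\> = \tr(a^\dagger b)$. Indeed, $\tr(\hat u^j \hat v^k) = 0$ unless $j \equiv k \equiv 0 \pmod 3$, and products of these matrices are again of this form up to a scalar. The quantum graph $\VT^3_4$ is spanned by $\hat u^j \hat v^k$ with $(j,k) \in \{(1,0),(2,0),(0,1),(0,2)\}$. Hence
$$
\overline{\VT^3_4} = \CC \hat u\hat v + \CC (\hat u\hat v)^2 + \CC \hat u\hat v^2 + \CC (\hat u \hat v^2)^2,
$$
using that $(\hat u \hat v)^2$ and $(\hat u\hat v^2)^2$ are scalar multiples of $\hat u^2\hat v^2$ and $\hat u^2 \hat v$, respectively.

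Now let $x = \hat u \hat v$. It is unitary and satisfies $x^3 \in \TT 1_3$, since $\hat v\hat u = \omega \hat u\hat v$. After rescaling $x$ by a scalar so that $x^3 = 1_3$, its eigenvalues are cube roots of unity. They must be exactly $1, \omega, \omega^2$, because $\tr(x) = \tr(x^2) = 0$. So $x$ is unitarily equivalent to $\hat v$, and $S = \CC x + \CC x^2$ is isomorphic to $\CC \hat v + \CC \hat v^2 = \VT^3_2$. In particular, $S$ is a regular two-dimensional quantum graph with $S \le \overline{\VT^3_4}$. Applying \cref{dim2 dim4} then gives $\overline{\VT^3_4} \iso \VT^3_4$.

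The main point to check carefully is the orthonormality and trace bookkeeping for the clock-and-shift basis and the resulting identification of the complement; the remaining steps are immediate from earlier results.
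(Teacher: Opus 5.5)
Your proof is correct and follows essentially the same route as the paper: reduce to $R = \VT^3_4$, note that $\overline{\VT^3_4}$ is four-dimensional and regular, exhibit the two-dimensional regular quantum graph $S = \CC \hat u \hat v + \CC \hat u^2 \hat v^2 \leq \overline{\VT^3_4}$, and apply \cref{dim2 dim4}. The differences are cosmetic: the paper gets regularity of $S$ from vertex-transitivity (since $\hat u, \hat v \in \Stab(S)$) rather than from your explicit unitary equivalence of $\hat u \hat v$ with $\hat v$, and it needs only $S \perp \VT^3_4$, not the full description of the complement.
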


\begin{proof}
We may assume without loss of generality that $R = \cns{VT}^3_4$. Its complement $\bar R$ is vertex-transitive by Propositions~\ref{complement}~and~\ref{VT34}, and $\dim(\bar R) = 4$. Furthermore, $\bar R$ contains the quantum graph $S = \CC \hat u \hat v + \CC \hat u^2 \hat v^2$, which is vertex-transitive because $\hat u, \hat v \in \cns{Stab}(\CC \hat u \hat v + \CC \hat u^2 \hat v^2)$. The quantum graphs $S$ and $R$ are regular by \cref{vertex-transitive regular}. We conclude by \cref{dim2 dim4} that $\bar R \iso \cns{VT}^3_4$.
\end{proof}

The operator $\Ad_{\VT^3_4}(\hat u \hat v) \in \Aut_0(\VT^3_4)$ has eigenvectors $\hat u$, $\hat u^2$, $\hat v$, and $\hat v^2$ with eigenvalues $\omega$, $\omega^2$, $\omega^2$, and $\omega$, respectively. The following lemma shows that if $R$ is a four-dimensional regular quantum graph, then the existence of an element in $\Aut_0(R)$ with this spectrum implies that $R \iso \VT^3_4$.

\begin{lemma}\label{1 in spectrum}
Let $R \leq M_3(\CC)$ be a regular quantum graph with $\dim(R) = 4$. If there exists a unitary $w \in \Stab(R)$ such that $1 \not \in \cns{Spec}(\Ad_R(w))$, then $R \iso \VT^3_4$.
\end{lemma}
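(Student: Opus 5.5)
The plan is to prove that, after a change of basis, $R$ is orthogonal to the diagonal quantum graph $\VT^3_2$. Then \cref{dim2 dim4} applies to the complement $\bar R$, and \cref{VT34 complement} transfers the conclusion back to $R$. Since conjugating by a unitary does not affect the hypotheses or the conclusion, we first diagonalize $w$: we may assume $w = \diag(\lambda_1, \lambda_2, \lambda_3)$ with each $\lambda_i \in \TT$.

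\textbf{Step 1: eigenvalue-one matrices are orthogonal to $R$.} Write $\Ad(w)\colon M_3(\CC) \to M_3(\CC)$ for $a \mapsto w a w^\dagger$; it is unitary for the trace inner product. Since $R$ is $\Ad(w)$-invariant, so is $R^\perp$, and hence the orthogonal projection $[R]$ commutes with $\Ad(w)$. Let $x \in M_3(\CC)$ satisfy $w x w^\dagger = x$. Then $[R]x \in R$ is also fixed by $\Ad_R(w)$. By hypothesis $1 \notin \Spec(\Ad_R(w))$, so $[R]x = 0$, that is, $x \perp R$. The matrix units $e_{ij}$ are eigenvectors of $\Ad(w)$ with eigenvalues $\lambda_i \overline{\lambda_j}$. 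In particular every diagonal matrix is fixed, so every diagonal matrix is orthogonal to $R$.

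\textbf{Step 2: the eigenvalues of $w$ are distinct.} If all three $\lambda_i$ coincide, then $\Ad_R(w)$ is the identity, which contradicts the hypothesis. Suppose exactly two coincide, say $\lambda_1 = \lambda_2 \neq \lambda_3$. By Step 1, $R$ is orthogonal to $e_{12}$, $e_{21}$ and all diagonal matrices, so $R \leq \cns{span}\{e_{13}, e_{31}, e_{23}, e_{32}\}$. Both spaces have dimension four, so they are equal. This space has the orthonormal basis $\sqrt 3\, e_{ij}$, for which
$$
\deg(R) = 3(e_{11} + e_{22} + 2 e_{33}).
$$
This is not scalar, contradicting regularity. (This is the only step requiring a small computation.)

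\textbf{Step 3: apply the earlier results to $\bar R$.} By Step 1, $D = \{a \text{ diagonal} \mid \tr(a) = 0\} = \VT^3_2$ is orthogonal to $R$ and consists of traceless matrices, so $D \leq \bar R$. By \cref{complement}, $\bar R$ is a regular quantum graph, and $\dim(\bar R) = 8 - 4 = 4$. The subgraph $D$ is regular with $\dim(D) = 2$ by \cref{VT32}. Hence \cref{dim2 dim4}, applied to $D \leq \bar R$, gives $\bar R \iso \VT^3_4$. Then \cref{VT34 complement} gives $R = \bar{\bar R} \iso \VT^3_4$.

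I expect the main obstacle to be Step 2. Its role is to rule out a degenerate $w$, which would force $R$ to be the non-regular span of four matrix units. Once $w$ is known to have distinct eigenvalues, Step 1 is a standard invariance argument and Step 3 only assembles results already proved in the paper.
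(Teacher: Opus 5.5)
Your proof is correct and takes essentially the paper's route: first show that $R$ is orthogonal to all diagonal matrices (the paper compares diagonal entries of the $\Ad_R(w)$-eigenvectors, while you use the fact that the projection onto $R$ commutes with $\Ad(w)$), then apply \cref{dim2 dim4} to $\VT^3_2 \leq \bar R$ and transfer back to $R$ via \cref{VT34 complement}. Your Step~2 is correct but unnecessary: Step~3 uses only the orthogonality from Step~1 and never that the eigenvalues of $w$ are distinct.
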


\begin{proof}
Let $w \in \mathrm{Stab}(R)$, and assume that $1 \not \in \cns{Spec}(\cns{Ad}_R(w))$. Let $(a_1, a_2, a_3, a_4)$ be an orthonormal basis of eigenvectors of $\cns{Ad}_R(w)$ with corresponding eigenvalues $(\lambda_1, \lambda_2, \lambda_3, \lambda_4)$. We may assume without loss of generality that $w$ is diagonal. For each index $i \in \{1, 2, 3, 4\}$, each diagonal entry of $w a_i w^\dagger$ is equal to the corresponding diagonal entry of $a_i$ and also to the corresponding diagonal entry of $\lambda_i a_i$. Since $1 \not \in \cns{Spec}(\cns{Ad}_R(w))$, it follows that each diagonal entry of each basis element $a_i$ is zero. Thus, $R$ is orthogonal to the unital $\dagger$-algebra of diagonal matrices in $M_3(\CC)$. It follows that $\bar R$ is a four-dimensional regular quantum graph that contains $\cns{VT}^3_2$. We conclude that $\bar R \iso \cns{VT}^3_4$ by \cref{dim2 dim4} and then that $R \iso \cns{VT}^3_4$ by \cref{VT34 complement}.
\end{proof}

The operator $\Ad_{\VT^3_4}(\hat w) \in \Aut_0(\VT^3_4)$ has eigenvectors 
$$\frac 1 2 (\hat u + i^k \hat v^2 + i^{2k} \hat u^2 + i^{3k} \hat v),$$ for $k \in \{0,1,2,3\}$, with eigenvalues $i^k$. Thus, $\Spec(\Ad_{\VT^3_4}(\hat w)) = \{1, -1, i, -i\}$. The next lemma establishes that for any four-dimensional vertex-transitive quantum graph $R \leq M_3(\CC)$, there exists a unitary $w \in \Stab(R)$ such that $\Spec(\Ad_R(w)) = \{1, \pm1, \lambda, \overline \lambda \}$ with multiplicity.

\begin{lemma}\label{Spec AdR w}
Let $R \leq M_3(\CC)$ be a vertex-transitive quantum graph. If $\dim(R) = 4$, then there exists a unitary $w \in \cns{Stab}(R)$ such that $$\cns{Spec}(\cns{Ad}_R(w)) = \{1, \pm 1, \lambda, \overline \lambda\}$$ with multiplicity and with $\lambda \neq \overline \lambda \in \TT$.
\end{lemma}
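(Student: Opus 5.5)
The plan is to exploit the real orthogonal structure of $\Aut_0(R)$. Assume $R \leq M_3(\CC)$ is vertex-transitive with $\dim(R) = 4$.

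\textbf{Step 1: reduce to a subgroup of $O(4)$.} By \cref{vertex-transitive regular}, $R$ is regular, and by \cref{regular dim4 is connected}, $R$ is connected. So \cref{kernel} gives an injective homomorphism $\Aut(R) \to O(p_R) \leq O(4)$, $\CC u \mapsto \iota^{-1}\circ \Ad_R(u)\circ\iota$. In particular, for each $w \in \Stab(R)$ the operator $\Ad_R(w)$ is orthogonally similar to a real orthogonal $4\times 4$ matrix. Its spectrum is therefore a multiset of four unimodular numbers that is closed under conjugation.

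\textbf{Step 2: find an element with a nonreal eigenvalue.} I claim some $w \in \Stab(R)$ has $\Ad_R(w)$ with a nonreal eigenvalue. Suppose instead that every $\Ad_R(w)$ has real spectrum. Since $\Ad_R(w)$ is unitary (indeed orthogonal), hence diagonalizable, every element of $\Aut(R)$ would be an involution. A group in which every element squares to the identity is abelian. So $\Aut(R)$, being a closed subgroup of $O(4)$, would be a finite elementary abelian $2$-group: commuting orthogonal involutions are simultaneously diagonalizable with entries $\pm1$, so there are at most $2^4$ of them.

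Then $\Stab(R) \to \Aut(R)$ gives a projective representation of a finite $2$-group $G$ on $\CC^3$. Vertex-transitivity says $\Stab(R)' = \CC 1_3$, so this projective representation is irreducible. By Schur's theorem, the degree of an irreducible projective representation of a finite group divides the group order. This would force $3 \mid 2^k$, a contradiction. This step is the main obstacle: I need to quote the Schur divisibility result for projective representations correctly. As a fallback, I would argue directly that commuting-up-to-scalar unitaries $u_g$ with $u_g^2 \in \TT 1_3$ satisfy $u_gu_h = \pm u_hu_g$. A maximal set of mutually commuting ones then generates a maximal abelian algebra whose eigenspaces are permuted by the others with orbits of size a power of $2$, which is incompatible with irreducibility on $\CC^3$.

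\textbf{Step 3: case analysis on the spectrum.} Fix $w$ with $\lambda \in \Spec(\Ad_R(w))$ nonreal, so $\overline\lambda$ is also an eigenvalue. The remaining two eigenvalues $\mu_1,\mu_2$ either form another nonreal conjugate pair or are both real, hence in $\{\pm1\}$.
\begin{enumerate}
\item If $\{\mu_1,\mu_2\}$ is $\{1,1\}$ or $\{1,-1\}$, then $w$ already has the required spectrum.
\item If $\{\mu_1,\mu_2\} = \{-1,-1\}$, pass to $w^2$, whose spectrum is $\{1,1,\lambda^2,\overline\lambda^2\}$. If $\lambda^2 \notin \RR$, then $w^2$ works. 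If $\lambda^2 \in \RR$, then $\lambda = \pm i$, and the spectrum of $\Ad_R(w)$ is $\{-1,-1,i,-i\}$, which does not contain $1$.
\item If $\{\mu_1,\mu_2\}$ is a second nonreal pair, then again $1 \notin \Spec(\Ad_R(w))$.
\end{enumerate}

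\textbf{Step 4: the case $1 \notin \Spec(\Ad_R(w))$.} In the last two subcases, \cref{1 in spectrum} gives $R \iso \VT^3_4$. Choose a unitary $x$ with $x \VT^3_4 x^\dagger = R$. Then $x\hat w x^\dagger \in \Stab(R)$, and $\Ad_R(x\hat w x^\dagger)$ is unitarily similar to $\Ad_{\VT^3_4}(\hat w)$. The latter has spectrum $\{1,-1,i,-i\}$, as computed just before the statement. This is of the required form with $\lambda = i$, which completes the proof.
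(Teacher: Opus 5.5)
Your proof is correct. It uses the same two ingredients as the paper: \cref{1 in spectrum} to handle spectra that miss $1$, and a contradiction with vertex-transitivity when every element of $\Aut(R)$ is an involution. You organize them differently, and you handle the key step with heavier tools.

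\textbf{Organization.} The paper first splits on whether $R \iso \VT^3_4$. If not, \cref{1 in spectrum} guarantees $1 \in \Spec(\Ad_R(w))$ for every $w \in \Stab(R)$. So it only needs one $w$ whose spectrum is not contained in $\{1,-1\}$, and conjugation-symmetry of the spectrum then forces the stated form with no case analysis. You instead first produce a nonreal eigenvalue, then sort out the remaining two eigenvalues, and invoke \cref{1 in spectrum} only when $1$ is missing. Your $w^2$ detour in case~2 is unnecessary: $\{-1,-1,\lambda,\overline\lambda\}$ never contains $1$, so Step~4 applies directly.

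\textbf{The involution case.} The paper avoids finiteness and representation theory entirely. Abelianness of $\Aut(R)$ gives $vu = \alpha uv$ with $\alpha \in \TT$. Squaring, using $u^2, v^2 \in \TT 1_3$, gives $\alpha^4 = 1$, and taking determinants in $M_3(\CC)$ gives $\alpha^3 = 1$. Hence $\Stab(R)$ is abelian, contradicting $\Stab(R)' = \CC 1_3$. Your route needs the bound $|\Aut(R)| \le 2^4$ from simultaneous diagonalization in $O(4)$, followed by Schur's theorem that the degree of an irreducible projective representation of a finite group divides the group order. This is valid, and it gives a conceptual reason for the contradiction ($3 \nmid 2^k$). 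But it imports a nontrivial theorem where a two-line computation suffices.

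Your fallback is already one line from the paper's argument. Once you have $u_g u_h = \pm u_h u_g$, taking determinants of $3 \times 3$ matrices gives $(\pm 1)^3 = 1$. So the $u_g$ all commute, and no orbit-counting on eigenspaces is needed.
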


\begin{proof}
If $R = \VT^3_4$, then $w = \hat w$ is such a unitary. Assume that $R \not \iso \VT^3_4$. Then, by \cref{1 in spectrum}, $1 \in \Spec(\Ad_R(w))$ for all $w \in \Stab(R)$.

Suppose $\cns{Spec}(\cns{Ad}_R(w)) \subseteq \{1,-1\}$ for all $w \in \cns{Stab}(R)$. It follows that every element of $\cns{Aut}_0(R)$ is order-two. However, $R$ is connected by \cref{regular dim4 is connected}, so $\cns{Aut_0}(R) \iso \cns{Aut}(R)$ by \cref{kernel}. Thus, every element of $\cns{Aut}(R)$ is order-two. In particular, $\cns{Aut}(R)$ is abelian.

We infer that for all $u, v \in \cns{Stab}(R)$, there exists a scalar $\alpha \in \TT$ such that $v u = \alpha u v$. We calculate that $v^2 u^2 = \alpha ^4 u^2 v^2 = \alpha^4 v^2 u^2$, concluding that $\alpha^4 = 1$. We also calculate that $\det(u) \det(v) = \det(v u) = \det (\alpha u v) = \alpha^3 \det(u) \det(v)$, concluding that $\alpha^3 = 1$. Thus, $\alpha = 1$. It follows that $\cns{Stab}(R)$ is abelian, contradicting that $R$ is vertex-transitive. Therefore, $\cns{Spec}(\cns{Ad}_R(w)) \not \subseteq \{1, -1\}$ for some $w \in \cns{Stab}(R)$.

Overall, we obtain a matrix $w \in \cns{Stab}(R)$ such that $1, \lambda, \overline \lambda \in \cns{Spec}(\cns{Ad}_R(w))$ for some $\lambda \not \in \RR$. Because the characteristic polynomial of $\cns{Ad}_R(w)$ has real coefficients, the fourth eigenvalue of $\cns{Ad}_R(w)$ must be real. We conclude that $\cns{Spec}(\cns{Ad}_R(w)) = \{1, \pm 1, \lambda, \overline \lambda\}$.
\end{proof}

If $w^2$ is \emph{nondegenerate} in the sense that it has three distinct eigenvalues, then $R$ must be isomorphic to $\VT^3_4$. Indeed, in this case, $$\Spec(\Ad_R(w^2)) = \{1, 1, \lambda^2, \overline \lambda^2\}$$ with multiplicity and with $\lambda^2 \neq 1$. Thus, the $1$-eigenspace of $\Ad_R(w^2)$ is a two-dimensional quantum graph that consists of matrices that commute with $w^2$ and is thus isomorphic to $\VT^3_2$. This implies that $R$ is isomorphic to $\VT^3_4$ by \cref{dim2 dim4}. Hence, the remainder of our proof that $\VT^3_4$ is the only four-dimensional vertex-transitive quantum graph in $M_3(\CC)$ is devoted to analyzing degenerate unitaries in $\Stab(R)$.

\begin{theorem}\label{four cases}
Let $R \leq M_3(\CC)$ be a quantum graph, and let $u \in \Stab(R)$. If $u$ is degenerate, then at least one of the following is true:
\begin{enumerate}
\item $u \in \TT 1_3$,
\item $R$ is not connected,
\item $a^2 = 0$ for some nonzero $a \in R$,
\item $\Spec(u) = \{\alpha, \alpha, -\alpha\}$ with multiplicity for some $\alpha \in \TT$.
\end{enumerate}
\end{theorem}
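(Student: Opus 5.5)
Since $u$ is degenerate, the plan is to assume $u \notin \TT 1_3$ and diagonalize it. Because $\Ad(u)$ restricted to $R$ is unitary, $R$ splits into its intersections with the eigenspaces of $\Ad(u)$. I then read off the conclusions from those eigenspaces.

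\textbf{Normal form for $u$.} Assume $u \notin \TT 1_3$. A degenerate unitary $u\in M_3(\CC)$ that is not scalar has exactly two distinct eigenvalues, one of them with multiplicity two. Conjugating $R$ and $u$ by a common unitary changes none of the four conditions: connectedness, existence of square-zero elements, the spectrum of $u$, and scalarity of $u$ are all isomorphism invariants. So I may assume $u = \diag(\alpha, \alpha, \beta)$ with $\alpha \neq \beta$ in $\TT$. Set $\gamma = \alpha \overline\beta \in \TT \setminus \{1\}$.

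\textbf{Eigenspace decomposition of $M_3(\CC)$.} I decompose $M_3(\CC)$ into eigenspaces of the unitary operator $\Ad(u)$:
\begin{enumerate}
\item on $M_2(\CC) \oplus \CC$, the block-diagonal matrices, the eigenvalue is $1$;
\item on $E = \CC e_{13} + \CC e_{23}$ the eigenvalue is $\gamma$;
\item on $E^\dagger = \CC e_{31} + \CC e_{32}$ the eigenvalue is $\overline\gamma$.
\end{enumerate}
Since $R$ is invariant under $\Ad(u)$ and $\Ad(u)$ is diagonalizable, $R$ is the direct sum of its intersections with the eigenspaces of $\Ad(u)$.

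\textbf{Case $\gamma \neq \overline\gamma$.} Here the three eigenspaces are distinct, so
$$R = (R \cap (M_2(\CC)\oplus\CC)) + (R\cap E) + (R \cap E^\dagger).$$
Every $a \in E$ has the form $a = x e_3^\dagger$ with $x \perp e_3$, so $a^2 = x (e_3^\dagger x) e_3^\dagger = 0$. Hence if $R \cap E \neq 0$, condition~3 holds. If $R \cap E = 0$, then also $R \cap E^\dagger = (R\cap E)^\dagger = 0$, using $R^\dagger = R$. Then $R \subseteq M_2(\CC)\oplus \CC$, so $R'' \subseteq M_2(\CC) \oplus \CC \neq M_3(\CC)$ and $R$ is not connected, which is condition~2.

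\textbf{Case $\gamma = \overline\gamma$.} Then $\gamma = -1$, so $\beta = -\alpha$ and $\Spec(u) = \{\alpha,\alpha,-\alpha\}$, which is condition~4.

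There is no real obstacle. The only point needing care is justifying that an $\Ad(u)$-invariant subspace equals the sum of its intersections with the eigenspaces. This holds because $\Ad(u)|_R$ is a unitary, hence diagonalizable, operator whose eigenvectors are eigenvectors of $\Ad(u)$ on $M_3(\CC)$.
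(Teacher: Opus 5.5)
Your proof is correct and follows essentially the same route as the paper's: you diagonalize $u$ with a repeated eigenvalue, dispose of the cases $\gamma = 1$ and $\gamma = -1$, and for $\gamma \notin \{1,-1\}$ use the eigenspaces of $\Ad(u)$ to get either a square-zero element in $\CC e_{13} + \CC e_{23}$ or $R \leq M_2(\CC) \oplus \CC$. The only difference is presentational: you decompose $R$ into its intersections with the eigenspaces, whereas the paper picks an eigenvector of $\Ad_R(u)$ that is not orthogonal to $e_{13}$.
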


\begin{proof}
We may assume without loss of generality that $u = \diag(1,1, \gamma)$ for some $\gamma \in \TT$. If $\gamma = 1$, then we obtain case~1. If $\gamma = -1$, then we obtain case~4. Assume $\gamma \not \in \{1, -1\}$. If $R$ is orthogonal to the elementary matrices $e_{13}$ and $e_{23}$, then it is also orthogonal to the elementary matrices $e_{31}$ and $e_{32}$, so $R \leq M_2(\CC) \oplus \CC$, and we obtain case~2.

Assume that $R$ is not orthogonal to $e_{13}$ or not orthogonal to $e_{23}$. We may assume without loss of generality that $R$ is not orthogonal to $e_{13}$. It follows that one of the eigenvectors $a \in R$ of $\Ad_R(u)$ is not orthogonal to $e_{13}$. The operator $\Ad_R(u)$ is the restriction of
$$
\Ad(u)\: e_{ij}  \mapsto
\begin{cases}
\gamma e_{ij} & \text{if }e_{ij} \in \{e_{31}, e_{32}\},\\ 
\overline \gamma e_{ij} & \text{if }e_{ij}  \in \{e_{13}, e_{23}\},\\
e_{ij} & \text{otherwise}.
\end{cases}
$$
Since $\overline \gamma \neq \gamma$, we find that $a \in \CC e_{13} + \CC e_{23}$, obtaining case~3.
\end{proof}

\begin{corollary}\label{alpha alpha -alpha}
Let $R \leq M_3(\CC)$ be a regular quantum graph such that $\dim(R) = 4$, and let $w \in \Stab(R)$. If $w$ is degenerate and $w \not \in\TT 1_3$, then $\Spec(w) = \{\alpha, \alpha, -\alpha\}$ with multiplicity for some $\alpha \in \TT$.
\end{corollary}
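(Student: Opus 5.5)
This follows directly from \cref{four cases}, once we rule out its cases 1 through 3 using the hypotheses and earlier results. Let $w \in \Stab(R)$ be degenerate with $w \notin \TT 1_3$. Applying \cref{four cases} to $R$ and $u = w$, at least one of its four alternatives holds. We eliminate them in turn.

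\begin{enumerate}
\item Case~1 is excluded directly by the assumption that $w \notin \TT 1_3$.
\item Case~2 is excluded because $R$ is regular and $\dim(R) = 4$, so $R$ is connected by \cref{regular dim4 is connected}.
\item Case~3, the existence of a nonzero $a \in R$ with $a^2 = 0$, is excluded by \cref{no nilpotents}, which applies to exactly these hypotheses: $R$ is a regular quantum graph in $M_3(\CC)$ with $\dim(R) = 4$.
\end{enumerate}

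Hence case~4 of \cref{four cases} must hold, that is, $\Spec(w) = \{\alpha, \alpha, -\alpha\}$ with multiplicity for some $\alpha \in \TT$.

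There is no real obstacle here; all the substantive work lives in \cref{four cases}, \cref{regular dim4 is connected} and \cref{no nilpotents}. The only point to check is that the hypotheses match. Degeneracy of $w$ is used exactly as in \cref{four cases}, where it means $w$ has a repeated eigenvalue. Regularity together with $\dim(R) = 4$ is precisely what both auxiliary propositions require.
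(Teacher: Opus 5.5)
Your proposal is correct and is essentially the paper's own proof: apply the four-case theorem to $w$, exclude case~1 by the hypothesis $w \notin \TT 1_3$, case~2 by the connectedness of four-dimensional regular quantum graphs, and case~3 by the absence of nonzero square-zero elements, which leaves case~4.
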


\begin{proof}
Assume that $w$ is degenerate and that $w \not \in \TT 1_3$. We apply \cref{four cases} to $w$. Case~1 contradicts assumption. Case~2 contradicts \cref{regular dim4 is connected}. Case~3 contradicts \cref{no nilpotents}. The remaining case~4 is our conclusion.
\end{proof}

\begin{lemma}\label{1 i -i}
Let $R \leq M_3(\CC)$ be a regular quantum graph such that $\dim(R) = 4$. If $\Spec(u) = \{1, i, -i\}$ for some $u \in \Stab(R)$, then $R \iso \VT^3_4$.
\end{lemma}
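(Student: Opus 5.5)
The plan is to diagonalize $u$ and split $R$ into eigenspaces of $\Ad_R(u)$. Then either \cref{1 in spectrum} or \cref{dim2 dim4} applies, or one of \cref{regular dim4 is connected} and \cref{no nilpotents} rules the case out. Without loss of generality $u = \diag(1, i, -i)$. Then $\Ad(u)$ acts on $e_{jk}$ by the scalar $u_j \overline{u_k}$, so $M_3(\CC)$ splits into the following eigenspaces:
\begin{itemize}
\item eigenvalue $1$: the diagonal matrices;
\item eigenvalue $i$: $\CC e_{21} + \CC e_{13}$;
\item eigenvalue $-i$: $\CC e_{12} + \CC e_{31}$;
\item eigenvalue $-1$: $\CC e_{23} + \CC e_{32}$.
\end{itemize}
Since $R$ is $\Ad(u)$-invariant, it is the orthogonal sum of its intersections with these eigenspaces. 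Write $m_1, m_i, m_{-i}, m_{-1}$ for the dimensions of these intersections. Since $R^\dagger = R$ and $\dagger$ swaps the $\pm i$ eigenspaces, $m_i = m_{-i}$. Since $R$ is traceless, $m_1 \leq 2$. Hence $m_1 + 2m_i + m_{-1} = 4$.

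Next I run through the cases.
\begin{itemize}
\item If $m_i = 2$, then $e_{21} \in R$ and $e_{21}^2 = 0$, which contradicts \cref{no nilpotents}.
\item If $m_i = 0$, then $m_1 = m_{-1} = 2$, so $R \leq \CC \oplus M_2(\CC)$. This contradicts \cref{regular dim4 is connected}.
\end{itemize}
So $m_i = 1$ and $m_1 + m_{-1} = 2$, which leaves three subcases.
\begin{itemize}
\item If $m_1 = 0$, then $1 \notin \Spec(\Ad_R(u))$, and \cref{1 in spectrum} gives $R \iso \VT^3_4$.
\item If $m_1 = 2$, then $R \supseteq \VT^3_2$, which is regular. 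Then \cref{dim2 dim4} gives $R \iso \VT^3_4$.
\end{itemize}
In each remaining subcase the $-1$ part, if nonzero, must not be spanned by $e_{23}$ or $e_{32}$ alone, again by \cref{no nilpotents}. Likewise the $i$ part is spanned by $a = x e_{21} + y e_{13}$ with $xy \neq 0$.

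The main obstacle is the mixed subcase $m_1 = m_{-1} = 1$. Here $R$ is spanned by:
\begin{itemize}
\item a traceless diagonal Hermitian $d$;
\item a Hermitian $b = p e_{23} + \overline p e_{32}$;
\item $a = x e_{21} + y e_{13}$ and $a^\dagger$.
\end{itemize}
I first impose regularity. Using the Hermitian basis $(a + a^\dagger)/\sqrt 2$, $i(a - a^\dagger)/\sqrt 2$, we get $aa^\dagger + a^\dagger a = \diag(|x|^2 + |y|^2, |x|^2, |y|^2)$. Also $b^2 = \frac32 (e_{22} + e_{33})$, and $|x|^2 + |y|^2 = 3$. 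Therefore $\deg(R) = 4 \cdot 1_3$ forces
\[
d_1^2 = 1, \qquad d_2^2 + |x|^2 = \tfrac52, \qquad d_3^2 + |y|^2 = \tfrac52.
\]
These equations alone do not seem contradictory, so more structure is needed.

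My first attempt is to look for a nonzero element of $R$ with square zero, say $\alpha d + \beta b + \gamma a + \delta a^\dagger$, and show that one exists, contradicting \cref{no nilpotents}. Failing that, I would look for a two-dimensional regular quantum graph inside $R$ built from mixtures of these generators, so that \cref{dim2 dim4} applies. As a further fallback, I would conjugate by diagonal unitaries to normalize $p$, $x$, $y$ to be real. I would then check directly whether $R$ is orthogonal to some conjugate of the diagonal algebra, so that \cref{1 in spectrum} applies to another stabilizer element such as $u$ conjugated suitably.
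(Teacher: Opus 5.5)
Your case analysis is sound as far as it goes. The eigenspace decomposition for $u=\diag(1,i,-i)$ is correct. Killing $m_i\in\{0,2\}$ with \cref{regular dim4 is connected} and \cref{no nilpotents}, and settling $m_1\in\{0,2\}$ with \cref{1 in spectrum} and \cref{dim2 dim4}, is a legitimate variant of the paper's case split. Your regularity equations in the mixed case are also correct.

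The gap is the mixed subcase $m_1=m_i=m_{-i}=m_{-1}=1$, which carries all the content of the lemma and which you leave open. Your first plan for it cannot work, because this case genuinely occurs. Take $R=\VT^3_4$ and $u=-i\hat w$. Then $\Spec(-i\hat w)=\{1,i,-i\}$, and $\Ad_{\VT^3_4}(\hat w)$ has eigenvalues $1,i,-1,-i$, each with multiplicity one. By \cref{no nilpotents}, $\VT^3_4$ has no nonzero square-zero element, so no contradiction can be extracted here. The subcase must be settled by showing that it determines $R$ up to isomorphism, not by ruling it out. Your two fallbacks are not carried out, and in practice both would need the explicit form of $R$.

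The missing step is that your equations are already rigid once you add $d_1+d_2+d_3=0$ and $d_1^2+d_2^2+d_3^2=3$, i.e. tracelessness and norm one of $d$.
\begin{itemize}
\item Your equations give $d_1=\pm1$. Replacing $d$ by $-d$, take $d_1=1$. Then $d_2+d_3=-1$ and $d_2d_3=-\frac12$, so $\{d_2,d_3\}=\{\frac{-1+\sqrt3}2,\frac{-1-\sqrt3}2\}$. This fixes $|x|^2=\frac52-d_2^2$ and $|y|^2=\frac52-d_3^2$.
\item The phases carry no information. Conjugating by a diagonal unitary, which commutes with $u$, multiplies $x,y,p$ by $\lambda_2\overline{\lambda_1},\ \lambda_1\overline{\lambda_3},\ \lambda_2\overline{\lambda_3}$. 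This leaves the phase of $p\,\overline{xy}$ unchanged. Replacing $a$ by $e^{i\phi}a$ rotates that phase freely. Together these make $x,y,p>0$.
\item The order of $d_2,d_3$ is immaterial. Conjugating by the permutation matrix exchanging the second and third coordinates sends $u$ to $u^{-1}$, so $u$ still stabilizes the conjugate. It interchanges the roles of $a$ and $a^\dagger$ and swaps $(d_2,|x|)$ with $(d_3,|y|)$.
\end{itemize}
So exactly one quantum graph arises in this subcase up to isomorphism. Since $\VT^3_4$ with $u=-i\hat w$ is an instance of it, that graph is $\VT^3_4$. This is how the paper concludes, after solving the same system in the coordinates $u=\diag(-i,1,i)$.
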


\begin{proof}
Assume that there exists a unitary $u \in \Stab(R)$ such that $\Spec(u) = \{1, i, -i\}$. We may assume without loss of generality that $u = \diag(-i, 1, i)$. The operator $\Ad_R(u)$ is then the restriction of
$$
\Ad(u)\: e_{ij} \mapsto \begin{cases}
(+ i) e_{ij} & \text{if }e_{ij} \in \{e_{21}, e_{32}\},\\
(- i) e_{ij} & \text{if }e_{ij}  \in \{e_{12}, e_{23}\},\\
- e_{ij} & \text{if } e_{ij} \in \{ e_{13}, e_{31}\}, \\
e_{ij} & \text{otherwise},
\end{cases}
$$
where the factors $+i$ and $-i$ are square roots of $-1$. If the $1$-eigenspace of $\Ad_R(u)$ is two-dimensional, then $\VT^3_2 \leq R$ and $R \iso \VT^3_4$ by \cref{dim2 dim4}. Similarly, if the $1$-eigenspace of $\Ad_R(u)$ is zero-dimensional, then $\VT^3_2 \perp R$, and $R \iso \VT^3_4$ by \cref{dim2 dim4} and \cref{VT34 complement}.

Assume that the $1$-eigenspace of $\Ad_R(u)$ is one-dimensional. Certainly, the $(+i)$-eigenspace and $(-i)$-eigenspace of $\Ad_R(u)$ have equal dimensions because $\Ad_R(u)$ is a $\dagger$-map. These eigenspaces can neither both be zero-dimensional nor both be two-dimensional because the $1$-eigenspace of $\Ad_R(u)$ is one-dimensional. Hence, they are both one-dimensional. Therefore, $\Spec(\Ad_R(u)) = \{1, i, -1, -i\}$.

Choose norm-one eigenvectors $a_0, a_1, a_2, a_3 \in R$ for $\Ad_R(u)$ with eigenvalues $1, i, -1, -i \in \TT$, respectively. We may assume without loss of generality that
$$
a_0 = \sqrt{\frac 1 2}
\begin{bmatrix}
\alpha_0 & 0 & 0\\
0 & \beta_0 & 0 \\
0 & 0 & \gamma_0\\
\end{bmatrix}, 
\qquad \quad
a_2 =\sqrt {\frac 3 2}
\begin{bmatrix}
0 & 0 & \alpha_2 \\
0 & 0 & 0 \\
\overline \alpha_2 & 0 & 0
\end{bmatrix},
$$
$$
a_1 =
\sqrt 3
\begin{bmatrix}
0 & 0 & 0 \\
\alpha_1 & 0 & 0\\
0 & \beta_1 & 0
\end{bmatrix},
\qquad \quad
a_3 =
\sqrt 3
\begin{bmatrix}
0 & \overline \alpha_1 & 0 \\
0 & 0 & \overline \beta_1 \\
0 & 0 & 0
\end{bmatrix},
$$
for some $\alpha_0, \beta_0, \gamma_0 \in \RR$ and $\alpha_1, \beta_1, \alpha_2 \in \CC$. The condition that these matrices are norm-one implies that
\begin{enumerate}
\item $\alpha_0^2 + \beta_0^2 + \gamma_0^2 = 6$,
\item $|\alpha_1|^2 + |\beta_1|^2 = 1$,
\item $|\alpha_2|^2 = 1$.
\end{enumerate}
The condition that these matrices are traceless implies that $\alpha_0 + \beta_0 + \gamma _0 = 0$.

We may further assume without loss of generality that $0 \leq \alpha_1 \leq \beta_1 \in \RR$, that $\alpha_2 = 1$, and that $\alpha_0 \geq 0$ by conjugating $R$ by appropriate unitaries. We arrange for $\alpha_1, \beta_1 \geq 0$ by conjugating by $\diag (\alpha_1/|\alpha_1|, 1, \overline \beta_1 / |\beta_1|)$. Then, we arrange for $\alpha_1 \leq \beta_1$ by conjugating by a permutation matrix. Finally, we arrange for $\alpha_2 = 1$ by conjugating by $\diag(\overline \beta_2, 1, \beta_2)$, where $\beta_2$ is a square root of $\alpha_2$. We rename and rescale the matrices $a_i$, for $i \in \{0, 1, 2, 3\}$, as necessary. In particular, we arrange for $\alpha_0 \geq 0$ in the last step. The conditions that $0 \leq \alpha_1 \leq \beta_1 \in \RR$ and $|\alpha_1|^2 + |\beta_1|^2 = 1$ imply that $0 \leq \alpha_1 \leq 1 / \sqrt 2$.

We now appeal to the regularity of $R$ to compute that
\begin{align*}
\frac 1 2
\begin{bmatrix}
\alpha_0^2 & 0 & 0 \\
0 & \beta_0^2 & 0 \\
0 & 0 & \gamma_0^2
\end{bmatrix}
& =
a_0^2
=
41_3 - a_1^\dagger a_1 - a_2^\dagger a_2 - a_3^\dagger a_3
\\ & =
\frac 1 2
\begin{bmatrix}
5 - 6 \alpha_1^2 & 0 & 0 \\
0 & 2 & 0 \\
0 & 0 & 6 \alpha_1^2 - 1
\end{bmatrix},
\end{align*}
obtaining the system
$$
\begin{cases}
\alpha_0^2 = 5 - 6 \alpha_1^2 \\
\beta_0^2 = 2\\
\gamma_0^2 = 6 \alpha_1^2 -1 \\
\alpha_0^2 + \beta_0^2 + \gamma_0^2 = 6\\
\alpha_0 + \beta_0 + \gamma_0 = 0 \\
0 \leq \alpha_0 \\
0 \leq \alpha_1 \leq 1 / \sqrt 2.
\end{cases} 
$$
This system has the unique solution
$$
\begin{cases}
\alpha_0 = \sqrt{2 + \sqrt 3} \\
\beta_0 = - \sqrt 2\\
\gamma_0 = - \sqrt{2 - \sqrt 3}\\
\alpha_1 = \sqrt{3 - \sqrt 3} / \sqrt 6.
\end{cases}
$$
We then have $\beta_1 =\sqrt{1 - \alpha_1^2} =\sqrt{3 + \sqrt 3}/ \sqrt 6$ and $\alpha_2 = 1$. Therefore, every four-dimensional regular quantum graph that admits a unitary $u \in \Stab(R)$ with $\Spec(u) = \{1, i ,-i\}$ is isomorphic to $\CC a_0 +  \CC a_1 + \CC a_2 + \CC a_3$ for those specific values of the parameters $\alpha_0, \beta_0, \gamma_0, \alpha_1, \beta_1, \alpha_2 \in \RR$. 

The quantum graph $\VT^3_4$ is four-dimensional and regular by \cref{VT34} and \cref{vertex-transitive regular}, and $- i \hat w \in \Stab(\VT^3_4)$ with $\Spec(- i\hat w) = \{1, i, -i\}$. We conclude that every four-dimensional regular quantum graph $R$ that admits a unitary $u \in \Stab(R)$ with this spectrum is isomorphic to $\VT^3_4$.
\end{proof}

\begin{theorem}\label{theorem VT34}
Let $R \leq M_3(\CC)$ be a quantum graph with $\dim(R) = 4$. If $R$ is vertex-transitive, then $R$ is isomorphic to $\VT^3_4$.
\end{theorem}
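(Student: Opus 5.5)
The plan is to start from the unitary provided by \cref{Spec AdR w} and reduce every remaining case either to the nondegenerate case discussed after that lemma or to \cref{1 i -i}. Assume that $R$ is vertex-transitive with $\dim(R) = 4$. By \cref{vertex-transitive regular}, $R$ is regular. By \cref{Spec AdR w}, there is a unitary $w \in \Stab(R)$ with $\Spec(\Ad_R(w)) = \{1, \pm 1, \lambda, \overline\lambda\}$ with multiplicity, where $\lambda \notin \RR$.

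First, I will show that $w$ is nondegenerate. It is not scalar, since otherwise $\Ad_R(w)$ would be the identity. If $w$ were degenerate, then \cref{alpha alpha -alpha} would give $\Spec(w) = \{\alpha, \alpha, -\alpha\}$. In that case $w^2 \in \TT 1_3$, so $\Ad(w)$ is an involution and all eigenvalues of $\Ad_R(w)$ are $\pm 1$. This contradicts $\lambda \notin \RR$.

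Next, I will split on whether $w^2$ is degenerate.
\begin{enumerate}
\item If $w^2$ is nondegenerate, the argument following \cref{Spec AdR w} finishes the proof. Indeed, $\Spec(\Ad_R(w^2)) = \{1, 1, \lambda^2, \overline\lambda^2\}$ with $\lambda^2 \neq 1$. The $1$-eigenspace of $\Ad_R(w^2)$ is a two-dimensional quantum graph inside the diagonal algebra of $w^2$, so it equals a conjugate of $\VT^3_2$. This subspace is regular, so \cref{dim2 dim4} gives $R \iso \VT^3_4$.
\item If $w^2$ is degenerate, it is not scalar, because then $\Ad(w)$ would again be an involution. Since $w^2 \in \Stab(R)$, \cref{alpha alpha -alpha} gives $\Spec(w^2) = \{\delta, \delta, -\delta\}$.
\end{enumerate}

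In the second case, after diagonalizing and multiplying $w$ by a unitary scalar (which does not change $\Ad_R(w)$), we may write $w = \diag(a, b, c)$ with $a^2 = b^2$ and $c^2 = -a^2$. Because $w$ is nondegenerate, $b = -a$ and $c = \pm i a$. Normalizing $a = 1$ gives $\Spec(w) = \{1, -1, i\}$ or $\{1, -1, -i\}$. Multiplying by $-i$ or $i$, respectively, gives a unitary $u \in \Stab(R)$ with $\Spec(u) = \{1, i, -i\}$. \cref{1 i -i} then gives $R \iso \VT^3_4$.

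The main obstacle is this last case, where $w^2$ is degenerate. The point is to rule out the possibility that $w^2$ is scalar, and then to recognize that the degenerate spectrum forced by \cref{alpha alpha -alpha}, together with nondegeneracy of $w$, pins $\Spec(w)$ down exactly to a scalar multiple of $\{1, i, -i\}$. Everything else is direct invocation of earlier results.
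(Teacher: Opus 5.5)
Your proof is correct and follows essentially the paper's argument. The only difference is ordering: you rule out degeneracy of $w$ at the start, using \cref{alpha alpha -alpha} and the involution observation. The paper instead handles the degenerate case at the end, for $\overline{\alpha} w$, once $\Spec((\overline{\alpha} w)^2) = \{1,1,-1\}$ is known.
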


\begin{proof}
Assume that $R$ is vertex-transitive. By \cref{Spec AdR w}, there exists a unitary $w \in \Stab(R)$ such that $\Spec(\Ad_R(w)) = \{1, \pm 1, \lambda, \overline \lambda\}$ with multiplicity and with $\lambda \neq \overline \lambda \in\TT$. It follows that $\Spec(\Ad_R(w^2)) = \{1, 1, \lambda^2, \overline {\lambda^2}\}$ with multiplicity. Hence, the $1$-eigenspace of $\Ad_R(w^2)$ is a two-dimensional quantum graph that is a subspace of the commutant $(\CC w^2)'$. Therefore, if $w^2$ is nondegenerate, then the $1$-eigenspace of $\Ad_R(w^2)$ is a quantum graph that is isomorphic to $\VT^3_2$ and is a subspace of $R$, so $R \iso \VT^3_4$ by \cref{dim2 dim4}.

Assume that $w^2$ is degenerate, and note that $w^2 \not \in \TT 1_3$. It follows that $\Spec(w^2) = \{\alpha^2, \alpha^2, -\alpha^2\}$ with multiplicity for some $\alpha \in \TT$ by \cref{alpha alpha -alpha}. Thus, $\Spec((\overline \alpha w)^2) = \{1, 1, -1\}$ with multiplicity, so $\Spec(\overline \alpha w) \subseteq \{1, -1, i, -i\}$. If $\overline \alpha w$ is nondegenerate, then $\Spec(i^k \overline \alpha w) = \{1, i, -i\}$ for some integer $k$, and $R \iso \VT^3_4$ by \cref{1 i -i}. If $\overline \alpha w$ is degenerate, then $\Spec( \overline \alpha w) = \{\beta, \beta, -\beta\}$ for some $\beta\in \TT$ by \cref{alpha alpha -alpha}, so $\Spec(\overline \beta \overline \alpha  w) = \{1, 1, -1\}$, contradicting $\lambda \in \Spec(\Ad_R(w)) = \Spec(\Ad_R(\overline \beta \overline \alpha  w))$. We conclude that $R \iso \VT^3_4$ in every case.
\end{proof}

A complete classification of vertex-transitive quantum graphs $R \leq M_3(\CC)$ can be obtained as a corollary of \cref{theorem VT34}. We isolate a part of this corollary as \cref{VT35} for easy reference.

\begin{definition}
For all $i, j \in \{1, 2, 3\}$ and all $\theta \in \RR$, let $\hat r_{ij}(\theta) \in M_3(\CC)$ be
$
\hat r_{i j}(\theta)= e^{-i \theta} e_{i j} + e^{i \theta} e_{ji}.
$
Let $\hat x_{i j} = \hat r_{i j} (0)$ and $\hat y_{i j} = \hat r_{i j} (\pi /2)$.
We define the following quantum graphs in $M_3(\CC)$:
\begin{enumerate}
\item $\VT^3_5(\theta) = \VT^3_2 + \CC \sqrt{ \frac 3 2} \hat x_{2 3} + \CC \sqrt{ \frac 3 2} \hat x_{3 1} + \CC \sqrt{ \frac 3 2} \hat r_{12}(\theta)$ for $\theta \in \RR$,
\item $\VT^3_6 = \CC \sqrt{ \frac 3 2} \hat x_{2 3} + \CC \sqrt{ \frac 3 2} \hat x_{3 1} + \CC \sqrt{ \frac 3 2} \hat x_{12} + \CC \sqrt{ \frac 3 2} \hat y_{2 3} + \CC \sqrt{ \frac 3 2} \hat y_{3 1} + \CC \sqrt{ \frac 3 2} \hat y_{12}$,
\item $\VT^3_8 = \VT^3_2 + \VT^3_6$.
\end{enumerate}
\end{definition}

\begin{proposition}\label{VT35}
Let $\theta \in [0, \pi/2]$. Then, the complement of $\VT^3_5(\theta)$ is isomorphic to $\VT^3_3(\frac \pi 2 - \theta)$. Hence, $\VT^3_5(\theta)$ is vertex-transitive.
\end{proposition}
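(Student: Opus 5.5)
The plan is to compute the complement $\overline{\VT^3_5(\theta)}$ explicitly, match it with $\VT^3_3(\frac \pi 2 - \theta)$ by a diagonal unitary conjugation together with \cref{theta1 theta2}, and then transfer vertex-transitivity through \cref{complement}.

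\emph{Step 1: compute the complement.} Recall that $\overline{\VT^3_5(\theta)} = (\CC 1_3 + \VT^3_5(\theta))^\perp$. Since $\VT^3_5(\theta)$ contains $\VT^3_2$, the span $\CC 1_3 + \VT^3_5(\theta)$ contains all diagonal matrices. Hence the complement lies in the $6$-dimensional space of off-diagonal matrices, which has orthogonal basis $\hat x_{ij}, \hat y_{ij}$ for $ij \in \{23, 31, 12\}$.

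Inside the $(i,j)$-block the relevant inner products are easy to write down. We have $\hat x_{ij} \perp \hat y_{ij}$. More generally, for all $\varphi, \psi \in \RR$,
$$
\tr(\hat r_{ij}(\varphi)^\dagger \hat r_{ij}(\psi)) = \tfrac 2 3 \cos(\varphi - \psi).
$$
Therefore the orthocomplement of $\hat x_{23}, \hat x_{31}, \hat r_{12}(\theta)$ among the off-diagonal matrices is spanned by $\hat y_{23}$, $\hat y_{31}$, and $\hat r_{12}(\theta + \frac \pi 2)$. A dimension count confirms this: $8 - 5 = 3$. So
$$
\overline{\VT^3_5(\theta)} = \CC \hat y_{23} + \CC \hat y_{31} + \CC \hat r_{12}(\theta + \tfrac \pi 2).
$$

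\emph{Step 2: conjugate by a diagonal unitary.} Take $d = \diag(d_1, d_2, d_3)$, so that $d e_{ij} d^\dagger = d_i \overline{d_j} e_{ij}$. The conditions $d_2 \overline{d_3} = i$ and $d_3 \overline{d_1} = i$ force the following:
\begin{enumerate}
\item $d \hat y_{23} d^\dagger = \hat x_{23}$,
\item $d \hat y_{31} d^\dagger = \hat x_{31}$,
\item $d_1 \overline{d_2} = -1$.
\end{enumerate}
For instance, $d = \diag(1, -1, i)$ works. By the third condition, $d$ sends $\hat r_{12}(\varphi)$ to $\hat r_{12}(\varphi + \pi)$. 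Hence
$$
d\, \overline{\VT^3_5(\theta)}\, d^\dagger = \VT^3_3(\theta + \tfrac{3\pi}{2}).
$$

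\emph{Step 3: identify the parameter.} We have $(\theta + \frac{3\pi}{2}) + (\frac \pi 2 - \theta) = 2\pi \in \pi\ZZ$. So \cref{theta1 theta2} gives $\VT^3_3(\theta + \frac{3\pi}{2}) \iso \VT^3_3(\frac \pi 2 - \theta)$, which proves the first claim.

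\emph{Step 4: vertex-transitivity.} Since $\theta \in [0, \pi/2]$, also $\frac \pi 2 - \theta \in [0, \pi/2]$. Thus $\VT^3_3(\frac \pi 2 - \theta)$ is vertex-transitive by \cref{Aut VT33}. Vertex-transitivity is invariant under isomorphism, because conjugating $\Stab$ by a unitary preserves its span. Hence $\overline{\VT^3_5(\theta)}$ is vertex-transitive. By \cref{complement}(1) and (4), its complement $\VT^3_5(\theta)$ is vertex-transitive as well.

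The main point needing care is the sign and phase bookkeeping in Step 2, namely getting the phase shift of $\hat r_{12}$ right. This is harmless in the end, since \cref{theta1 theta2} absorbs shifts by $\pi$ and reflections $\theta \mapsto -\theta$.
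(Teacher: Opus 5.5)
Your proof is correct. Step 1 is exactly the paper's computation of the complement, $R_\theta = \CC \hat y_{23} + \CC \hat y_{31} + \CC \hat r_{12}(\theta + \frac \pi 2)$. The genuine difference is in how you identify $R_\theta$.

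\textbf{The paper's route.} The paper notes that $R_\theta$ is regular and invokes the classification \cref{VT33}, so that $R_\theta \iso \VT^3_3(\theta')$ for some $\theta'$. It then computes the panoramic polynomial of $R_\theta$, a multiple of $\cos(\theta + \frac \pi 2)\, t_1 t_2 t_3$. Isomorphism invariance of this polynomial, together with \cref{theta1 theta2}, pins down the parameter. The paper compresses all of this into one sentence.

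\textbf{Your route.} You exhibit an explicit isomorphism, conjugation by $\diag(1,-1,i)$. This is more elementary and self-contained. It needs neither the regularity of $R_\theta$, nor the classification \cref{VT33}, nor the panoramic polynomial. It uses only the easy direction (2)$\Rightarrow$(1) of \cref{theta1 theta2}.

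\textbf{Trade-off.} The paper's version is shorter given its machinery and showcases the invariant it introduces. Yours produces a concrete unitary and is fully checkable by hand.

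\textbf{A small simplification.} Since $\hat r_{12}(\varphi + \pi) = -\hat r_{12}(\varphi)$, your conjugation already lands on $\VT^3_3(\theta + \frac \pi 2)$ as a subspace. So the phase bookkeeping you flagged in Step 2 is vacuous at the level of spans.

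Your Step 4 matches the paper's appeal to \cref{complement} and \cref{Aut VT33}. Your explicit remark that vertex-transitivity is preserved under isomorphism fills in a step the paper leaves implicit.
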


\begin{proof}
The complement of $\VT^3_5(\theta)$ is the regular quantum graph
$$
R_\theta = \CC \hat y_{23} + \CC \hat y_{31} + \CC \hat r_{12}(\theta + \textstyle \frac \pi 2).
$$
It follows by \cref{complement} and by Theorems~\ref{VT33} and \ref{Aut VT33} that $\VT^3_5(\theta)$ is vertex-transitive. The panoramic polynomial of $R_\theta$ is $$\textstyle p(t_1, t_2, t_3) =3 \sqrt{\frac 3 2} \cos (\theta + \frac \pi 2) t_1 t_2 t_3 ,$$ so by \cref{VT33}, $R_\theta$ is isomorphic to $\VT^3_3(\theta + \frac \pi 2)$. Therefore, $R_\theta$ is isomorphic to $\VT^3_3(\frac \pi 2 - \theta)$ by \cref{theta1 theta2}.
\end{proof}

We now state the complete classification of vertex-transitive quantum graphs in $M_3(\CC)$. This classification is summarized in Figure \ref{fig. classification}. Recall that $\VT^3_0$ is the zero-dimensional quantum graph in $M_3(\CC)$.

\begin{corollary}\label{main}
The quantum graphs 
\begin{equation*}\label{classification list}\tag{$*$}
\VT^3_0, \VT^3_2, \VT^3_3(\theta), \VT^3_4, \VT^3_5(\theta), \VT^3_6, \VT^3_8 \leq M_3(\CC)
\end{equation*}
with $0 \leq \theta \leq \pi/2$, form a complete classification of vertex-transitive quantum graphs in $M_3(\CC)$ up to isomorphism. Explicitly,
\begin{enumerate}
\item each of the quantum graphs in eq.~(\ref{classification list}) is vertex-transitive,
\item no two of them are isomorphic, with $$\VT^3_3(\theta_1) \not \iso \VT^3_3(\theta_2), \qquad \quad \VT^3_5(\theta_1) \not \iso \VT^3_5(\theta_2)$$ when $\theta_1 \neq \theta_2$,
\item every vertex-transitive quantum graph $R \leq M_3(\CC)$ is isomorphic to a quantum graph in eq.~(\ref{classification list}).
\end{enumerate}
\end{corollary}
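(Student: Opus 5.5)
The plan is to prove the three claims in turn, organized by $\dim(R)$ and using complementation (\cref{complement}) to pass between dimension $d$ and $8-d$. The key observation is that $\dim(\bar R) = 8 - \dim(R)$ for quantum graphs in $M_3(\CC)$.

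\textbf{Claim 1 (each listed graph is vertex-transitive).}
\begin{itemize}
\item $\VT^3_0$ is trivially vertex-transitive, since its stabilizer is all of $U(3)$.
\item $\VT^3_2$ is handled by \cref{Aut VT32}.
\item $\VT^3_3(\theta)$ is handled by \cref{Aut VT33}.
\item $\VT^3_4$ is handled by \cref{VT34}.
\item $\VT^3_5(\theta)$ is handled by \cref{VT35}.
\item For the remaining two, I will check that $\VT^3_8$ is the complement of $\VT^3_0$ and $\VT^3_6$ is the complement of $\VT^3_2$. Indeed, $\VT^3_6$ is spanned by the off-diagonal Hermitian matrices, which is exactly the traceless part of the orthocomplement of the diagonal matrices. \cref{complement}(4) then gives vertex-transitivity.
\end{itemize}

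\textbf{Claim 2 (no two listed graphs are isomorphic).} Isomorphic graphs have equal dimension, so only graphs within the same dimension need comparison. In dimension $3$, \cref{theta1 theta2} shows that for $\theta_1,\theta_2\in[0,\pi/2]$, the condition $\theta_1\pm\theta_2\in\pi\ZZ$ forces $\theta_1=\theta_2$. The only exception to check is $\theta_1+\theta_2=\pi$, which within $[0,\pi/2]$ means both equal $\pi/2$.

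In dimension $5$, I will use \cref{VT35} together with the remark after \cref{defn. complement} that $\bar R\iso\bar S$ iff $R\iso S$. An isomorphism $\VT^3_5(\theta_1)\iso\VT^3_5(\theta_2)$ would give $\VT^3_3(\frac\pi2-\theta_1)\iso\VT^3_3(\frac\pi2-\theta_2)$, and hence $\theta_1=\theta_2$ by the dimension-$3$ case.

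\textbf{Claim 3 (every vertex-transitive graph appears in the list).} Let $R$ be vertex-transitive of dimension $d\in\{0,\dots,8\}$. By \cref{vertex-transitive regular}, $R$ is regular, and by \cref{complement} so is $\bar R$, which is also vertex-transitive. I will go through the dimensions as follows.
\begin{itemize}
\item $d=0$: $R=\VT^3_0$.
\item $d=1$: impossible by \cref{dim R neq 1}.
\item $d=2$: $R\iso\VT^3_2$ by \cref{VT32}.
\item $d=3$: $R\iso\VT^3_3(\theta)$ for some real $\theta$ by \cref{VT33}. I then reduce $\theta$ into $[0,\pi/2]$ using \cref{theta1 theta2}: shift by multiples of $\pi$ and negate.
\item $d=4$: $R\iso\VT^3_4$ by \cref{theorem VT34}.
\item $d=5,6,7,8$: here $\bar R$ is vertex-transitive of dimension $3,2,1,0$ respectively.
  \begin{itemize}
  \item $d=7$ is impossible, since $\dim(\bar R)=1$ contradicts \cref{dim R neq 1}.
  \item $d=8$: $\bar R=0$, so $R=\VT^3_8$.
  \item $d=6$: $\bar R\iso\VT^3_2=\overline{\VT^3_6}$, so $R\iso\VT^3_6$.
  \item $d=5$: $\bar R\iso\VT^3_3(\varphi)$ with $\varphi\in[0,\pi/2]$. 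By \cref{VT35}, $\VT^3_3(\varphi)\iso\overline{\VT^3_5(\frac\pi2-\varphi)}$, so $R\iso\VT^3_5(\frac\pi2-\varphi)$ with $\frac\pi2-\varphi\in[0,\pi/2]$.
  \end{itemize}
\end{itemize}

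\textbf{Main obstacle.} The only delicate points are the small verifications: that $\overline{\VT^3_2}=\VT^3_6$ and $\overline{\VT^3_0}=\VT^3_8$, both of which follow from the orthonormal basis $\{1_3\}\cup\VT^3_2\cup\VT^3_6$ of $M_3(\CC)$; and the parameter bookkeeping for $\theta$. I will handle both explicitly.
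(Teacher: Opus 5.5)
Your proposal is correct and follows the paper's proof essentially step for step. Claim~1 uses \cref{Aut VT32}, \cref{Aut VT33}, \cref{VT34}, \cref{VT35}, and complementation for $\VT^3_6$ and $\VT^3_8$; Claim~2 uses dimension invariance together with \cref{theta1 theta2} and \cref{VT35}; Claim~3 proceeds by dimension, with \cref{complement} handling $\dim(R)\geq 5$, and your explicit reduction of $\theta$ into $[0,\pi/2]$ and the parameter bookkeeping in dimension $5$ fill in details the paper leaves implicit.
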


\begin{proof}
Claim~1.
The quantum graph $\VT^3_0$ is vertex-transitive simply because $\Stab(\VT^3_0) = U(3)$. The quantum graphs $\VT^3_2$, $\VT^3_3(\theta)$ and $\VT^3_4$ are all vertex-transitive by \cref{Aut VT32}, \cref{Aut VT33}, and \cref{VT34}, respectively. The quantum graphs $\VT^3_6$ and $\VT^3_8$ are vertex-transitive by \cref{complement} because they are the complements of $\VT^3_2$ and $\VT^3_0$, respectively. For each $\theta \in [0, \pi/2]$, the quantum graph $\VT^3_5(\theta)$ is vertex-transitive by \cref{VT35}.

Claim~2. The dimension of a quantum graph $R \leq M_3(\CC)$ is clearly an isomorphism invariant, so it remains to show that $\VT^3_3(\theta_1) \not \iso \VT^3_3(\theta_2)$ and $\VT^3_5(\theta_1) \not \iso \VT^3_5(\theta_2)$ when $\theta_1 \neq \theta_2$. The former claim follows by \cref{theta1 theta2}, and then the latter claim follows by \cref{VT35}.

Claim~3. Let $R\leq M_3(\CC)$ be a vertex-transitive quantum graph. If $\dim(R) = 0$ or $\dim(R) = 8$, then the claim is trivial. If $1 \leq \dim(R) \leq 4$, then the claim follows by Proposition \ref{dim R neq 1} or \ref{VT32} or by Theorem \ref{VT33} or \ref{theorem VT34}. If $5 \leq \dim(R) \leq 7$, then the claim follows by Propositions \ref{complement} and \ref{VT35}.
\end{proof}

\bibliographystyle{plain}
\bibliography{refs.bib}

\Addresses

\newpage

\begin{sidewaysfigure}[ht]
\centering
$
\quad
\hat u = e_{21} + e_{32} + e_{13}
\qquad
\hat v = e_{11} + \omega e_{22} + \omega^2 e_{33}
\qquad
\hat r_{jk}(\theta) = e^{i \theta} e_{kj} + e^{-i\theta}e_{jk}
\qquad
\hat x_{jk} = \hat r_{jk}(0)
\qquad
\hat y_{jk} = \hat r_{jk}( \frac \pi 2)
$

\vspace{2.5ex}

\centering

\begin{tabular}{|c|c|c|c|c|} 
 \hline
 $R$ & \text{orthogonal basis} & $ \sqrt 2(\text{panoramic polynomial})$ & $\Aut(R)$ & $\cns{diam}(R)$ \\
 \hline
 $\mathrm{VT}^3_0$ & $\varnothing$ & 0 & $PU(3)$ & $\infty$ \\
 $\mathrm{VT}^3_2$ & $\{\hat v,\hat v^2\}$ & $t_1^3 - 3t_1t_2^2$ & $\TT^2 \rtimes S_3$ & $\infty$ \\ 
 $\mathrm{VT}^3_3(0)$ & $\{\hat x_{12}, \hat x_{23}, \hat x_{31}\}$ & $3 \sqrt 3 t_1 t_2 t_3 $ & $S_4$  & $2$  \\
 $\mathrm{VT}^3_3(\theta)$ & $\{\hat x_{12},\hat x_{23},\hat r_{31}(\theta)\}$ & $3 \sqrt 3  \cos(\theta) t_1 t_2 t_3 $ & $A_4$  & $2$  \\
 $\mathrm{VT}^3_3(\frac \pi 2)$ & $\{ \hat x_{12} , \hat x_{23}, \hat y_{31}\}$ & $0$ & $SO(3)$  & $2$  \\
 $\mathrm{VT}^3_4$ & $\{\hat u, \hat u^2, \hat v , \hat v^2\}$ & $t_1^3 - 3t_1t_2^2 + t_3^3 - 3t_3t_4^2$ & $(C_3 \wr S_2) \cap A_6$ & $2$  \\
 $\mathrm{VT}^3_5(0)$ & $\{ \hat v , \hat v^2 , \hat x_{12} ,\hat x_{23} , \hat x_{31}\}$ & $t_1^3 - 3 t_1 t_2^2 + 3 \sqrt 3 t_3 t_4 t_5 + \frac 3 2 t_1 (t_3^2 + t_4^2 - 2 t_5^2) + \frac 3 2 \sqrt 3 t_2 (t_3^2 - t_4^2) $ & $SO(3)$  & $2$  \\
 $\VT^3_5(\theta)$ & $\{ \hat v , \hat v^2 , \hat x_{12} ,\hat x_{23} , \hat r_{31}(\theta)\}$ & $t_1^3 - 3 t_1 t_2^2 + 3 \sqrt 3 \cos(\theta) t_3 t_4 t_5 + \frac 3 2 t_1 (t_3^2 + t_4^2 - 2 t_5^2) + \frac 3 2 \sqrt 3 t_2 (t_3^2 - t_4^2) $ & $A_4$  & $2$  \\
 $\VT^3_5(\frac \pi 2)$ & $\{\hat v, \hat v^2, \hat x_{12}, \hat x_{23}, \hat y_{31} \}$ & $t_1^3 - 3 t_1 t_2^2 + \frac 3 2 t_1 (t_3^2 + t_4^2 - 2 t_5^2) + \frac 3 2 \sqrt 3 t_2 (t_3^2 - t_4^2) $ & $S_4$ & $2$ \\
 $\VT^3_6$ & $\{\hat x_{12}, \hat x_{23}, \hat x_{31}, \hat y_{12},  \hat y_{23}, \hat y_{31}\} $ & $3 \sqrt 3 (t_1 t_2 t_3 - t_1 t_5 t_6 - t_2 t_4 t_6 - t_3 t_4 t_5)$ &  $\TT^2 \rtimes S_3$ & 2 \\
 $\VT^3_8$ & $\{\hat v,  \hat v^2,  \hat x_{12}, \hat x_{23},  \hat x_{31}, \hat y_{12}, \hat y_{23}, \hat y_{31}\} $ & $p(t_1, \ldots, t_8)$ & $PU(3)$ & $1$ \\ 
 \hline
\end{tabular}

\vspace{1ex}

\centering
$
\qquad \qquad p(t_1, \ldots, t_8) = t_1^3 - 3 t_1 t_2^2 + 3 \sqrt 3(t_3 t_4 t_5 - t_3 t_7 t_8 - t_4 t_6 t_8 - t_5 t_6 t_7  ) + \frac 3 2 t_1 (t_3^2 + t_4^2 - 2t_5^2 + t_6^2+ t_7^2 - 2t_8^2 ) + \frac 3 2 \sqrt{3} t_2 (t_3^2 - t_4^2 + t_6^2 -t_7^2) 
$
\caption{The vertex-transitive quantum graphs in $M_3(\CC)$ modulo isomorphism.
}
\label{fig. classification}
\begin{minipage}{\textwidth}
\centering
(The given panoramic polynomials need not be computed in the given basis.)
\end{minipage}
\end{sidewaysfigure}

\end{document}